\documentclass{amsart}%
\usepackage{palatino, mathpazo}
\usepackage{amsfonts}
\usepackage{amsmath}
\usepackage{amssymb,latexsym,xcolor}
\usepackage{graphicx}
\usepackage[mathscr]{eucal}
\usepackage{harpoon}
\usepackage{amssymb}
\setcounter{MaxMatrixCols}{30}

\providecommand{\U}[1]{\protect \rule{.1in}{.1in}}

\newtheorem{theorem}{Theorem}[section]

\newtheorem{corollary}[theorem]{Corollary}
\newtheorem{definition}[theorem]{Definition}
\newtheorem{example}[theorem]{Example}
\newtheorem{Lemma}[theorem]{Lemma}
\newtheorem{Proposition}[theorem]{Proposition}
\newtheorem{Theorem}{Theorem}

\theoremstyle{remark}
\newtheorem{remark}[theorem]{Remark}

\numberwithin{equation}{section}

\def\N{\mathbb N}

\def\R{\mathbb R}

\def\al{\alpha}

\def\dl{\Delta}
\def\e{\varepsilon}
\def\om{\Omega}
\def\iy{\infty}

\def\la{\lambda}

\def\pa{\partial}

\def\rt{\rightarrow}

\def\lan{\langle}
\def\ran{\rangle}

\def\bt{\begin{theorem}}
\def\et{\end{theorem}}
\def\bl{\begin{Lemma}}
\def\el{\end{Lemma}}
\def\bd{\begin{definition}}
\def\ed{\end{definition}}
\def\bc{\begin{corollary}}
\def\ec{\end{corollary}}
\def\bprop{\begin{Proposition}}
\def\eprop{\end{Proposition}}
\def\bp{\begin{proof}}
\def\ep{\end{proof}}
\def\bx{\begin{example}}
\def\ex{\end{example}}
\def\br{\begin{remark}}
\def\er{\end{remark}}
\def\be{\begin{equation}}
\def\ee{\end{equation}}
\def\bal{\begin{align}}
\def\bn{\begin{enumerate}}
\def\en{\end{enumerate}}
\def\eal{\end{align}}

\def\bg{\begin{align*}}
\def\eg{\end{align*}}
\def\bcs{\begin{cases}}
\def\ecs{\end{cases}}
\def\abs#1{\lvert#1\rvert}
\def\RNum#1{\uppercase\expandafter{\romannumeral #1\relax}}

\def\RN{\mathbb R^N}

\def\bean{\begin{eqnarray*}}
\def\eean{\end{eqnarray*}}

\def\p{\mathcal P}
\def\s{\mathcal S}
\def\mc{\mathcal}

\setcounter{tocdepth}{1}

\begin{document}
\title[Biharmonic Lane-Emden Problems]{Asymptotic behavior of least energy nodal solutions for biharmonic Lane-Emden problems in dimension four}
\author{Zhijie Chen}
\address{Department of Mathematical Sciences, Yau Mathematical Sciences Center,
Tsinghua University, Beijing, 100084, China}
\email{zjchen2016@tsinghua.edu.cn}
\author{Zetao Cheng}
\address{Department of Mathematical Sciences,
Tsinghua University, Beijing, 100084, China}
\email{chengzt20@mails.tsinghua.edu.cn}
\author{Hanqing Zhao}
\address{Department of Mathematical Sciences,
Tsinghua University, Beijing, 100084, China}
\email{zhq20@mails.tsinghua.edu.cn}




\begin{abstract}
In this paper, we study the asymptotic behavior of least energy nodal solutions $u_p(x)$ to the following fourth-order elliptic problem
\[
	 \begin{cases}
\Delta^2 u =|u|^{p-1}u \quad &\hbox{in}\;\Omega, \\
u=\frac{\partial u}{\partial \nu}=0 \ \ &\hbox{on}\;\partial\Omega,
\end{cases}
\]
where $\Omega$ is a bounded $C^{4,\alpha}$ domain in $\mathbb{R}^4$ and $p>1$. Among other things, we show that up to a subsequence of $p\to+\infty$, $pu_p(x)\to 64\pi^2\sqrt{e}(G(x,x^+)-G(x,x^-))$, where $x^+\neq x^-\in \Omega$ and $G(x,y)$ is the corresponding Green function of $\Delta^2$.
This generalize those results for $-\Delta u=|u|^{p-1}u$ in dimension two by (Grossi-Grumiau-Pacella, Ann.I.H.Poincar\'{e}-AN, 30 (2013), 121-140) to the biharmonic case, and also gives an alternative proof of Grossi-Grumiau-Pacella's results without assuming their comparable condition $p(\|u_p^+\|_{\infty}-\|u_p^-\|_{\infty})=O(1)$.
\end{abstract}

\maketitle


\section{Introduction}
\label{section-1}

Fourth-order elliptic equations arise in differential geometry and statistical mechanics, and have received ever-increasing interests in the last decades; see e.g. \cite{3,5,11,13,14,18,22,23} and references therein.
Let $\om$ be a bounded $C^{4,\alpha}$ domain in $\R^4$ and $p>1$. In this paper, we study the asymptotic behavior of least energy nodal solutions to the following biharmonic equation with Dirichlet boundary conditions
\begin{equation}\label{PP}
\begin{cases}
\Delta^2 u =|u|^{p-1}u \quad &\hbox{in}\;\Omega, \\
u=\frac{\partial u}{\partial \nu}=0 \ \ &\hbox{on}\;\partial\Omega,
\end{cases}
\end{equation}
as $p\rt+\iy$, where $\nu$ denotes the outer normal vector of $\partial\Omega$.

First let us recall some known results about \eqref{PP}.
Clearly solutions of \eqref{PP} are critical points of the functional $J_p(u): H_0^2(\Omega)\rightarrow \R$,
\begin{equation} J_p(u):=\frac{1}{2}\int_{\Omega}|\Delta u|^2dx-\frac{1}{p+1}\int_{\Omega}|u|^{p+1}dx.\end{equation}
By standard variational methods it is known that \eqref{PP} has a positive least energy solution $u_p$, i.e. the corresponding energy $J_p(u_p)$ is smallest among all nontrivial solutions. The asymptotic behavior of the least energy solutions $u_p$ was studied by Santra-Wei \cite{wei}, where they proved that
\begin{equation}\label{eq-wei}
\lim_{p\to+\infty}p\int_{\Omega}|u_p|^{p+1}dx=64\pi^2e
\end{equation}
and up to a subsequence, \[pu_p(x)\to 64\pi^2\sqrt{e} G(x,x_0) \text{ in } C_{loc}^4(\overline{\Omega}\setminus\{x_0\}).\] Here
$G$ is the Green function of $\Delta^2$ on $\om$ under Dirichlet boundary conditions, that is,
\begin{equation}\label{green}
	 \begin{cases}
\Delta_x^2 G(x,y)=\delta(x-y) \quad &\hbox{in}\;\Omega, \\
G(x,y)=\frac{\partial G(x,y)}{\partial \nu}=0 \ \ &\hbox{for}\;x\in\partial\Omega,
\end{cases}
\end{equation}
and
\[H(x,y):=G(x,y)+\frac{1}{8\pi^2}\log|x-y|\]
 is the regular part of the Green function $G$, and $x_0$ is a critical point of $H(x,x)$. See also \cite{BEG,Tak1,Tak2} for the asymptotic analysis of least energy solutions for the Navier boundary problem
 \[
\begin{cases}
\Delta^2 u =|u|^{p-1}u \quad &\hbox{in}\;\Omega, \\
u=\Delta u=0 \ \ &\hbox{on}\;\partial\Omega.
\end{cases}
\]

On the other hand, comparing to positive solutions, nodal solutions of \eqref{PP} are more difficult to study and not much is known. Here a solution $u$ of \eqref{PP} is called a \emph{nodal} solution, if $u^{\pm}\neq 0$, where $u^+=\max\{u,0\}$ and $u^-=\min\{u,0\}$. A nodal solution is called \emph{a least energy nodal solution} if the corresponding energy $J_p(u)$ is smallest among all nodal solutions.

For the classical Lane-Emden problem
\begin{equation}\label{LE}
\begin{cases}
-\Delta u =|u|^{p-1}u \quad &\hbox{in}\;\Omega\subset\mathbb{R}^2, \\
u=0 \ \ &\hbox{on}\;\partial\Omega,
\end{cases}
\end{equation}
the least energy nodal solutions can be easily obtained by minimizing the functional
\[I(u):=\frac{1}{2}\int_{\Omega}|\nabla u|^2dx-\frac{1}{p+1}\int_{\Omega}|u|^{p+1}dx,\quad u\in H_0^1(\Omega)\]
on the nodal Nehari set
\begin{equation}\label{nehari}\{u\in H_0^1(\Omega) : u^{\pm}\neq 0, I'(u)u^{\pm}=0\}.\end{equation}
However, this approach does not work for the biharmonic problem \eqref{PP}, because
\begin{equation}\text{\it $u\in H_0^2(\Omega)$ can not guarantee $u^{\pm}\in H_0^2(\Omega)$}.\end{equation}
Recently, to overcome this difficulty,  Alves-N\'{o}brega \cite{C.O. Alves2016JDE} applied a dual method to prove the existence of least energy nodal solutions of \eqref{PP} under the following condition on the domain $\Omega$:
\begin{align*}\tag{G} \text{\it The Green function $G$ in $\Omega$ defined by \eqref{green} is positive.}
\end{align*}
It is known that (G) can not hold for arbitrary smooth bounded domains. However, (G) is true for $\Omega$ being a ball or small $C^{4,\alpha}$-smooth perturbations of a ball; see e.g. \cite[Chapter 6]{GGS} and \cite{Grunau2010ARMA positive Green}. Again under the condition (G), the existence of infinitely many nodal solutions \eqref{PP} was proved by Weth \cite{Weth2006dual}.

The purpose of this paper is to study the asymptotic behavior of least energy nodal solutions of \eqref{PP}, the existence of which was given by \cite{C.O. Alves2016JDE}. Remark that in contrast with positive solutions, the asymptotic analysis of nodal solutions is much more difficult and remains largely open. For example, for the Lane-Emden problem \eqref{LE}, the asymptotic behavior of positive solutions has been well studied; see \cite{Francesca2018JMPAMorse index,L-norm2018,Francesca2017positive,Renwei1,Renwei2} and the references therein. However, there are only partial results \cite{Francesca2015JEMS,Grossi2013Poincare,Grunau2014JMPA} concerning the asymptotic behavior of nodal solutions, the study of which is far from complete.

One of our main motivations comes from Grossi-Grumiau-Pacella's seminal work \cite{Grossi2013Poincare}, where they studied the asymptotic behavior of least energy nodal solutions for the Lane-Emden equation \eqref{LE}. Denote $\|u\|_{\infty}:=\sup_{x\in\Omega} |u(x)|$. Among other things, they proved that

\begin{Theorem}\cite{Grossi2013Poincare}\label{thm-LE}
Let $u_p$ be a least energy nodal solution of \eqref{LE}. Then
\be\label{1.4}
p\int_{\om}|\nabla u_p|^2dx\rt16\pi e\quad\quad\text{as }p\rt\iy.
\ee
Furthermore, under the following condition
\begin{align}\tag{B}
    p\left(\Vert u_p^+\Vert_{\infty}-\Vert u_p^-\Vert_{\infty}\right)=O(1)\quad\text{ as }\quad p\rt\iy,
\end{align}
then $\|u_p^{\pm}\|_\infty\to \sqrt{e}$ and
the following statements hold up to a subsequence.
\begin{itemize}
\item[(1)] There are $x^+\neq x^-\in \Omega$ such that
\[pu_p(x)\to 8\pi\sqrt{e}(G(x,x^+)-G(x,x^-))\quad\text{in }C^2_{loc}(\overline{\Omega}\setminus\{x^+,x^-\}).\]
\item[(2)] The points $x^+, x^-$ satisfy
\[
	 \begin{cases}
\nabla_x G(x^+,x^-)=\nabla_x H(x^+,x^+),  \\
\nabla_x G(x^-,x^+)=\nabla_x H(x^-,x^-).
\end{cases}
\]
\end{itemize}
Here $G$ is the Green function of $-\Delta$ in $\Omega\subset\mathbb R^2$ with Dirichlet boundary conditions and $H(x,y)=G(x,y)+\frac1{2\pi}\log|x-y|$.
\end{Theorem}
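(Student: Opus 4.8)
We sketch one possible route to Theorem~\ref{thm-LE}; the same scheme is what we shall adapt to the biharmonic problem \eqref{PP}, the novelty there being that we will run the delicate step below without hypothesis (B).

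\emph{Step 1: the energy level.} I would first prove \eqref{1.4} by a soft argument. Since $u_p\in H_0^1(\Omega)$ gives $u_p^{\pm}\in H_0^1(\Omega)$ with disjoint supports and $I'(u_p)u_p^{\pm}=I'(u_p^{\pm})u_p^{\pm}=0$, the least-energy nodal level satisfies $I(u_p)=I(u_p^{+})+I(u_p^{-})\ge 2c_p(\Omega)$, where $c_p(\Omega)$ is the least-energy \emph{positive} level of \eqref{LE}; conversely, testing the nodal level with $v_p^{(1)}-v_p^{(2)}$, where $v_p^{(i)}$ is a least-energy positive solution on two fixed disjoint balls $B_1,B_2\subset\Omega$, gives $I(u_p)\le c_p(B_1)+c_p(B_2)$ by disjointness of supports. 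Since the asymptotics of least-energy positive solutions are known, $p\,c_p(\Omega),\,p\,c_p(B_i)\to 4\pi e$, so $pI(u_p)\to 8\pi e$ and hence $p\|\nabla u_p\|_2^2\to 16\pi e$. The same squeeze yields $p\|\nabla u_p^{\pm}\|_2^2=p\int_\Omega|u_p^{\pm}|^{p+1}\,dx\to 8\pi e$.

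\emph{Step 2: profile, sup-norm, and the Green limit.} Let $x_p^{\pm}$ be maximum points of $u_p^{\pm}$, $M_p^{\pm}=\|u_p^{\pm}\|_{\infty}$, and fix the scales by $(\varepsilon_p^{\pm})^{2}=(p\,(M_p^{\pm})^{p-1})^{-1}$. A standard blow-up gives $\tfrac{p}{M_p^{\pm}}\big(u_p(x_p^{\pm}+\varepsilon_p^{\pm}\,\cdot\,)\mp M_p^{\pm}\big)\to U$ in $C^2_{loc}(\mathbb R^2)$, where $U$ solves $-\Delta U=e^U$, $U(0)=\max U=0$, $\int_{\mathbb R^2}e^U=8\pi$. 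Changing variables, $p\int_{B_{R\varepsilon_p^{\pm}}(x_p^{\pm})}|u_p^{\pm}|^{p+1}\,dx\to 8\pi(\lim M_p^{\pm})^{2}$ as $R\to\infty$, so Step~1 forces $\|u_p^{\pm}\|_{\infty}\to\sqrt e$, that $u_p^{\pm}$ has exactly one bubble, and, along a subsequence, $p\,|u_p|^{p-1}u_p\,dx\rightharpoonup 8\pi\sqrt e\,(\delta_{x^{+}}-\delta_{x^{-}})$ with $x_p^{\pm}\to x^{\pm}\in\overline\Omega$. Feeding this into the Green representation $u_p(x)=\int_\Omega G(x,y)\,|u_p|^{p-1}u_p(y)\,dy$ gives $p\,u_p(x)\to 8\pi\sqrt e\,(G(x,x^{+})-G(x,x^{-}))$ for $x\ne x^{\pm}$, which Schauder estimates upgrade to $C^2_{loc}(\overline\Omega\setminus\{x^{+},x^{-}\})$ because $p\,|u_p|^{p-1}u_p\to 0$ away from $x^{\pm}$. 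This is (1), \emph{provided} $x^{+}\ne x^{-}$ and both are interior.

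\emph{Step 3: separating the concentration points --- the main obstacle.} One must rule out $x^{+}=x^{-}$ and boundary blow-up, and this is where hypothesis (B) intervenes: from $(\varepsilon_p^{+}/\varepsilon_p^{-})^{2}=(M_p^{-}/M_p^{+})^{p-1}$ we see that $p(M_p^{+}-M_p^{-})=O(1)$ keeps $\varepsilon_p^{+}$ and $\varepsilon_p^{-}$ of the same order, so the two bubbles live at comparable scales and neither can hide inside the other. If $x^{+}=x^{-}=x_0$, then by Step~2 $p\,u_p\to 8\pi\sqrt e\,(G(\cdot,x_0)-G(\cdot,x_0))\equiv 0$ in $C^1_{loc}(\overline\Omega\setminus\{x_0\})$, while $p\int_{B_\delta(x_0)}|u_p|^{p+1}\,dx\to 16\pi e$; plugging both into the Pohozaev identity
\[\int_{\partial B_\delta(x_0)}\Big[\tfrac12|\nabla u_p|^2\,(x-x_0)\cdot\nu-\big((x-x_0)\cdot\nabla u_p\big)\partial_\nu u_p\Big]\,d\sigma=-\tfrac{2}{p+1}\int_{B_\delta(x_0)}|u_p|^{p+1}\,dx+\tfrac{\delta}{p+1}\int_{\partial B_\delta(x_0)}|u_p|^{p+1}\,d\sigma,\]
multiplying by $p^2$ and letting $p\to\infty$, the left-hand side tends to $0$ and the right-hand side to $-32\pi e$, a contradiction. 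Boundary blow-up is ruled out in the same spirit (a bubble centered at $\partial\Omega$ would carry mass $4\pi$ rather than $8\pi$, via a half-space Pohozaev / Kelvin-transform argument, contradicting the value $8\pi e$ carried by each nodal domain from Step~1). Hence $x^{+}\ne x^{-}$ lie in $\Omega$, and (1) is proved.

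\emph{Step 4: location of $x^{\pm}$.} For (2) I would use the conservation law $\partial_j\big(\partial_i u_p\,\partial_j u_p-\delta_{ij}(\tfrac12|\nabla u_p|^2-\tfrac{1}{p+1}|u_p|^{p+1})\big)=0$, which over $B_\delta(x^{+})$ gives
\[\int_{\partial B_\delta(x^{+})}\Big[\partial_i u_p\,\partial_\nu u_p-\tfrac12\nu_i|\nabla u_p|^2+\tfrac{\nu_i}{p+1}|u_p|^{p+1}\Big]\,d\sigma=0 .\]
Multiplying by $p^2$ and letting $p\to\infty$, the last term drops out, and writing $\Psi:=8\pi\sqrt e\,(G(\cdot,x^{+})-G(\cdot,x^{-}))=-4\sqrt e\log|x-x^{+}|+\phi^{+}$ with $\phi^{+}:=8\pi\sqrt e\,(H(\cdot,x^{+})-G(\cdot,x^{-}))$ harmonic near $x^{+}$, the logarithmic part of the boundary integral cancels and the remainder tends, as $\delta\to 0$, to $-2\pi(4\sqrt e)\,\partial_i\phi^{+}(x^{+})$; hence $\nabla\phi^{+}(x^{+})=0$, i.e. $\nabla_x H(x^{+},x^{+})=\nabla_x G(x^{+},x^{-})$, and the companion identity at $x^{-}$ follows verbatim. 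I expect Step~3 --- keeping the two concentrations interior and apart --- to be the genuinely hard point; condition (B), through the comparability of $\varepsilon_p^{+}$ and $\varepsilon_p^{-}$, is precisely what makes it work, and removing it is the main technical contribution of this paper.
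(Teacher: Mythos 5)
This statement is quoted from \cite{Grossi2013Poincare} and is not reproved in the paper; what you propose is essentially a reconstruction of the cited proof, and in outline it is correct. Your Step 1 (the $H_0^1$ nodal Nehari squeeze between $2c_p(\Omega)$ and $c_p(B_1)+c_p(B_2)$) is exactly the mechanism behind \eqref{1.4}, and Steps 2--4 (blow-up to the Liouville bubble, Pohozaev identities to separate $x^+$ from $x^-$, to exclude boundary concentration, and to locate the points) follow the scheme of \cite{Grossi2013Poincare,wei}. Note, however, that this is deliberately \emph{not} the route of the present paper: since $u^\pm\notin H_0^2$ in the biharmonic setting, the Nehari argument is unavailable, and the paper obtains the energy level by the dual method (Propositions \ref{Lem27} and \ref{Lem26}) and removes (B) altogether via the local quantization dichotomy of Lemma \ref{l5.1} (dual-cone decomposition), Lemma \ref{Lem31} and Lemma \ref{Lemma last}, which force exactly one bubble of each sign without ever comparing $\varepsilon_p^+$ with $\varepsilon_p^-$.

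Two points in your sketch should be corrected. First, you locate the use of (B) in Step 3, but in \cite{Grossi2013Poincare} --- as recalled in the remark following Theorem \ref{th12} --- (B) is needed already in your Step 2: the blow-up at $x_p^-$ is ``standard'' only once one knows $d(x_p^-,NS_p)/\varepsilon_p^-\to\infty$, so that the rescaled nodal set leaves every compact set and the limit solves the Liouville equation on all of $\R^2$; without this, the identity $p\int|u_p^-|^{p+1}\approx 8\pi(M_p^-)^2$, and hence $\|u_p^-\|_\infty\to\sqrt e$, are not justified. Under (B) this is fine, but it is the step where (B) genuinely enters (and the step this paper replaces). Second, the claim that a boundary bubble ``would carry mass $4\pi$'' is not the correct mechanism; boundary concentration is excluded, as in Proposition \ref{prop 33}, by a Pohozaev identity centered near the boundary point, where the Dirichlet conditions and the limit $pu_p\to\sum_i\gamma_i G(\cdot,x_i)$ (with $G(\cdot,x_i)\equiv 0$ when $x_i\in\partial\Omega$) force the local energy to vanish, contradicting the bubble's lower bound. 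Similarly, ``exactly one bubble per sign with mass $8\pi\sqrt e$'' is not a formal consequence of Step 1 alone; it requires the local lower bound $8\pi e$ per concentration point (sharp constant as in the two-dimensional analogue of Lemma \ref{Lemm27}) together with the Pohozaev mass--energy relation, which is how both \cite{Grossi2013Poincare} and Section 4 here actually argue.
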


\begin{remark}
There are counterexamples in \cite{Francesca2015JEMS,Grunau2014JMPA} that the condition (B) fails for some nodal solutions of \eqref{LE}. It was conjectured in \cite{Grossi2013Poincare} that the condition (B) should hold automatically for the least energy nodal solutions of \eqref{LE}. We will study this open question elsewhere.
\end{remark}

In this paper, we generalize those results in \cite{Grossi2013Poincare} \emph{without assuming the condition (B)}. Our main result is
\bt\label{Theorem 1.1} Assume (G) and let $u_p$ be a least energy nodal solution of
 \eqref{PP}. Then as $p\to+\infty$,
\begin{align}\label{new 1,5}
    p\int_{\Omega}|\Delta u_p|^2= p\int_{\om}|u_p|^{p+1}\rightarrow 128\pi^2e,
\end{align}
\begin{align}\label{new 1,5-2}p\int_{\om}|u_p^\pm|^{p+1}\rt64\pi^2e,\quad\|u_p^\pm\|_\infty\to\sqrt{e}.\end{align}
Furthermore, up to a subsequence the following statements hold.
\begin{itemize}
\item[(1)] There are $x^+\neq x^-\in \Omega$ such that
\begin{equation}\label{pup}pu_p(x)\to 64\pi^2\sqrt{e}(G(x,x^+)-G(x,x^-))\quad\text{in }C^4_{loc}(\overline{\Omega}\setminus\{x^+,x^-\}).\end{equation}
\item[(2)] The points $x^+, x^-$ satisfy
\begin{equation}\label{801}
	 \begin{cases}
\nabla_x G(x^+,x^-)=\nabla_x H(x^+,x^+),  \\
\nabla_x G(x^-,x^+)=\nabla_x H(x^-,x^-).
\end{cases}
\end{equation}
\end{itemize}
Here $G$ is the Green function of $\Delta^2$ in $\Omega\subset\mathbb R^4$ defined in \eqref{green}.
\et

\begin{remark}\
\begin{itemize}
\item[(1)] The proof of \eqref{1.4} in Theorem \ref{thm-LE} relies on the technique of the nodal Nehari set \eqref{nehari} and can not work for the biharmonic problem \eqref{PP}. We will develop further the dual method in \cite{C.O. Alves2016JDE}
    to prove \eqref{new 1,5}.

    \item[(2)] Since we do not assume the condition (B), Theorem \ref{Theorem 1.1} can not be proved similarly as Theorem \ref{thm-LE}. Here we need to adopt some ideas from Santra-Wei \cite{wei}, where they studied the asymptotic behavior of solutions $u_p$ of a different problem
        \begin{equation}\label{sw}
\begin{cases}
\Delta^2 u =(u^+)^p \quad &\hbox{in}\;\Omega, \\
u=\frac{\partial u}{\partial \nu}=0 \ \ &\hbox{on}\;\partial\Omega
\end{cases}
\end{equation}
under the energy assumption $\limsup_{p\to\infty}p\int_{\Omega}(u^+)^{p+1}<+\infty$ but without the condition (G). In this case, since the maximum principle fails, $u_p(x)$ can be sign-changing but it was proved in \cite{wei} that $pu_p(x)\geq -C$ for some $C>0$ independent of $p$, so the negative part $pu_p^-(x)$ does not blow up. This is not the case for \eqref{PP}, for which it follows from \eqref{pup} that both $pu_p^+(x)$ and $pu_p^-(x)$ blow up.
\end{itemize}
\end{remark}

For a nodal solution $u_p$ of (\ref{PP}), let $x_p^+$ (resp. $x_p^-$) be a maximum (resp. minimum) point of $u_p$ in $\om$, i.e.
\[u_p(x_p^+)=\Vert u_p^+ \Vert_\infty,\quad
u_p(x_p^-)=-\Vert u_p^- \Vert_\infty.\]
Without loss of generality, we may always assume
\[\Vert u_p\Vert_{\infty}=\Vert u_p^+\Vert_{\infty},\quad\text{i.e.}\quad \Vert u_p^+ \Vert_\infty\geq \Vert u_p^- \Vert_\infty.\]
As in \cite{Grossi2013Poincare,wei}, we consider the scaling
\begin{align}\label{wp+}
    W_{p^\pm}(x):=p\frac{u_p(x_p^\pm+\varepsilon_p^\pm x)-u_p(x_p^\pm)}{u_p(x_p^\pm)},\quad x \in \Omega(\varepsilon_p^\pm):=\frac{\Omega-x_p^\pm}{\varepsilon_p^\pm},
\end{align}
where \[(\varepsilon_p^\pm)^{-4}:=p\Vert u_p^\pm\Vert_{\infty}^{p-1}.\]
We also denote
\begin{equation}\label{omp}\om_p^+:=\{x\in\om:u_p(x)>0\}, \qquad\om_p^-:=\{x\in\om:u_p(x)<0\},\end{equation}
\begin{equation}\label{nsp}\om^\pm(\varepsilon_p^\pm):=\frac{\Omega_p^\pm-x_p^\pm}{\varepsilon_p^\pm},\qquad NS_p:=\{x\in\Omega:u_p(x)=0\}.\end{equation}
Then to prove Theorem \ref{Theorem 1.1}, we need to prove the following result.

\bt\label{th12}
Assume (G) and let $u_p$ be a least energy nodal solution of
 \eqref{PP}. Then up to a subsequence, $W_{p^\pm}(x)$
defined on $\Omega(\varepsilon_p^\pm)$ (resp. defined on $\om^\pm(\varepsilon_p^+)$) converges to $W(x)$ in $C_{loc}^4(\mathbb{R}^4)$,  where $W(x)=-4\log(1+\frac{|x|^2}{8\sqrt{6}})$ solves \begin{equation}\label{P_0}
	 \begin{cases}
	      \Delta^2 W =e^W \quad &\hbox{in}\;\mathbb{R}^4,\\
	      \int_{\mathbb{R}^4}e^W=64\pi^2.
		   \end{cases}
\end{equation}
\et

\begin{remark} \
\begin{itemize}
\item[(1)]
The analogous statement of Theorem \ref{th12} for the Lane-Emden problem \eqref{LE} was proved in \cite{Grossi2013Poincare} under the condition (B). More precisely, under the natural assumption $\Vert u_p^+ \Vert_\infty\geq \Vert u_p^- \Vert_\infty$, the convergence of $W_{p^+}$ was proved in \cite{Grossi2013Poincare} without the condition (B), but in the proof of the convergence of $W_{p^-}$ in \cite{Grossi2013Poincare}, the condition (B) was used to guarantee $\lim_{p\to\infty}\frac{d(x_p^-, NS_p)}{\varepsilon_p^-}=\infty$. Here since we do not assume the condition (B), we need to develop some different techniques from \cite{Grossi2013Poincare}.

\item[(2)]Remark that our proof of Theorem \ref{Theorem 1.1} and Theorem \ref{th12} also works for the Lane-Emden problem \eqref{LE}, and hence gives an alternative proof of those results in \cite{Grossi2013Poincare} without assuming the condition (B). As a consequence, Theorem A actually holds without the condition (B).
\end{itemize}
\end{remark}

The paper is organized as follows. In Section 2, we use the dual method to prove the energy estimate \eqref{new 1,5} for least energy nodal solutions.
In Section 3, we analyze the bubbling phenomenon for general nodal solutions. Finally in Section 4, we turn back to least energy nodal solutions and prove  Theorem \ref{Theorem 1.1} and Theorem \ref{th12}.

{\bf Notations}. Throughout the paper, $C$ always denotes constants that are independent of $p$ (possibly different in different places). Conventionally, we use $o_p(1)=o(1)$ to denote quantities that converge to $0$ as $p\to\infty$. Denote $B_r(x):=\{y\in\R^4 : |y-x|<r\}$. For any $s>1$, we denote $\|u\|_s:=(\int_{\Omega}|u|^s)^{1/s}$.

\section{Energy estimates for least energy nodal solutions}
\label{section-4}

Remark that for general bounded $C^{4,\alpha}$ domain $\Omega\subset\mathbb{R}^4$, the following estimates hold for the Green function $G(x,y)$.

\bl[\cite{A. Dall'Acqua2004estimate green,Grunau2010ARMA positive Green}]\label{Lemma 2.1}
There exists $C>0$ such that for all $x,y\in \Omega,\ x\neq y$, there hold $G(x,y)\geq -C$,
\be\label{2-3}|G(x,y)|\leq C\log\left(1+\frac{1}{|x-y|}\right),\quad |G(x,y)|\leq C(1+|\log|x-y||),\ee
\be\label{2-4} |\nabla^iG(x,y)|\leq\frac{C}{|x-y|^i},\quad \text{for }\ i=1,2,3,4.\ee
\el

In this section, we always assume that the domain $\Omega$ satisfies the condition (G) and prove the energy estimate \eqref{new 1,5} for the least energy nodal solutions.
First we recall the setting of the dual method from
\cite{C.O. Alves2016JDE}. Given any $\omega \in L^{\frac{p+1}{p}}(\Omega)$, it follows from \cite[Chapter 2]{GGS} that the linear problem
\begin{equation}\label{PL}
	 \begin{cases}
\Delta^2 u =\omega \quad &\hbox{in}\;\Omega, \\
u=\frac{\partial u}{\partial \nu}=0 \ \ &\hbox{on}\;\partial\Omega,
\end{cases}
\end{equation}
has a unique solution $u\in W^{4,\frac{p+1}{p}}(\om)\cap H_0^2(\Omega)$, denoted by $u=T\omega=T_p \omega$. So we obtain a linear operator $T=T_p: L^{\frac{p+1}{p}}(\om)\rt W^{4,\frac{p+1}{p}}(\om)\cap H_0^2(\Omega)$ such that $u=T\omega$ is the unique solution of $(\ref{PL})$. Moreover, $T$ satisfies the following properties:
\begin{itemize}
\item[$(T_1)$] $T$ is positive, that is, for any $\omega\in L^{\frac{p+1}{p}}(\Omega)$, $\int_{\Omega}\omega T\omega dx\geq 0$. Furthermore, if $\omega\geq 0$ and $\omega\not\equiv 0$, then $T\omega>0$ in $\Omega$. Clearly this property follows from the condition (G).
\item[$(T_2)$] $T$ is symmetric in the sense that
\[\int_{\Omega}\omega_1 T\omega_2 dx=\int_{\Omega} \omega_2T\omega_1 dx,
\quad\forall \omega_1, \omega_2\in L^{\frac{p+1}{p}}(\Omega).\]
\end{itemize}

Recall that the energy functional of
\begin{equation}\label{P}
	 \begin{cases}
\Delta^2 u =|u|^{p-1}u \quad &\hbox{in}\;\Omega, \\
u=\frac{\partial u}{\partial \nu}=0 \ \ &\hbox{on}\;\partial\Omega,
\end{cases}
\end{equation}
 is given by
\[
    J(u)=J_p(u)=\frac{1}{2}\int_{\om}|\dl u|^2dx-\frac{1}{p+1}\int_{\om}|u|^{p+1}dx,\quad u\in H_0^2(\Omega),
\]
we define the dual functional associated with ($\ref{P}$) by
\begin{align}\label{dual-fun}
   \Psi(\omega)=\Psi_p(\omega):=\frac{p}{p+1}\int_{\om}|\omega|^{\frac{p+1}{p}}dx-\frac{1}{2}\int_{\om}\omega T_p\omega dx,\quad \omega \in L^{\frac{p+1}{p}}(\Omega).
\end{align}
Then $\Psi\in C^1(L^{\frac{p+1}{p}}(\om),\R)$ with
$$\Psi'(\omega)\eta=\int_{\om}|\omega|^{\frac{1-p}{p}}\omega\eta dx-\int_{\om}\eta T_p\omega dx,\quad \forall \omega,\eta\in L^{\frac{p+1}{p}}(\om).$$
Note that
\begin{equation}\label{dual-fun1}\Psi(\omega)=(\frac{p}{p+1}-\frac12)\int_{\om}|\omega|^{\frac{p+1}{p}}dx\quad\text{if }\;
\Psi'(\omega)\omega=0.\end{equation}
We collect the following results
from \cite{C.O. Alves2016JDE}.
\bprop \cite{C.O. Alves2016JDE}\label{new prop4.1}
\begin{itemize}
\item[(i)] $\omega$ is a critical point of $\Psi$ if and only if $u=T\omega$ is a critical point of $J$.  Moreover, $\Psi(\omega)=J(u).$

\item[(ii)]  $u=T\omega$ is a nodal solution of (\ref{P}) if and only if $\omega$ is a nodal critical point of $\Psi$, i.e. $\Psi'(\omega)=0$ and $\omega^{\pm}\neq0$.

\item[(iii)] The functional $\Psi$ has a critical point $\omega_{*} \in L^{\frac{p+1}{P}}(\om)\setminus \{0\}$ such that $\omega_*\geq 0$ and $$\Psi(\omega_{*})=\inf\limits_{u \in \mc N}\Psi(\omega),$$  where
    \begin{equation}\label{mcN}\mc N:=\{\omega\in L^{\frac{p+1}{P}}(\om)\setminus \{0\}\;:\; \Psi'(\omega)\omega=0\}.\end{equation}
    Moreover, letting $u_*=T\omega_*$, we have $u_*>0$ in $\Omega$ and $$J(u_*)=\inf\limits_{u\in \mc N'}J(u)=\Psi(\omega_{*})=\inf\limits_{u \in \mc N}\Psi(\omega),$$ where $$\mc N':=\{u\in H_0^2 (\Omega)\setminus \{0\}\;:\; J'(u)u=0\}.$$
    In other words, $u_*$ is a positive least energy solution of \eqref{P}.

\item[(iv)]  Letting
\begin{equation}\label{mcM}\mc M=\mc M_p:=\{\omega\in L^{\frac{p+1}{p}}(\om)\;:\;\omega^{\pm}\neq0,  \Psi_p'(\omega)\omega^{\pm}=0 \},\end{equation} there is $\omega_0\in\mc M$ such that $$\Psi(\omega_0)=\inf\limits_{\omega\in\mc M}\Psi(\omega),\quad \Psi'(\omega_0)=0. $$ Consequently, $u_0=T\omega_0$ is a least energy nodal solution of ($\ref{P}$), that is
$$J(u_0)=\min\{J(u):u\ \text{is a nodal solution of } (\ref{P})\}=\Psi(\omega_0)=\inf\limits_{\omega\in\mc M}\Psi(\omega).$$
\end{itemize}
\eprop

\begin{remark}\label{rmk}
Since the condition (G) holds for a ball in $\mathbb{R}^4$, we note that properties $(T_1)$-$(T_2)$ and Proposition \ref{new prop4.1} also hold when we replace the domain $\Omega$ with a ball.
\end{remark}

Now we show that the least energy nodal solution of \eqref{P} has the "double energy" property.

\bprop \label{Lem27} Fix any $p>1$ and
recall Proposition \ref{new prop4.1} that $u_*$ is a least energy solution of \eqref{P} and $u_0$ is a least energy nodal solution of \eqref{P}, then $J(u_0)>2J(u_*)$.
\eprop

\begin{proof} Recall Proposition \ref{new prop4.1} that $\omega_0\in\mc M$ and $\Psi(\omega_0)=J(u_0)$. Then
\begin{align}\label{new4.36}
    0=\Psi'(\omega_0)\omega_0^\pm
    =\int_{\om}|\omega_0^\pm|^{\frac{p+1}{p}}-\int_{\om}\omega_0^\pm T\omega_0^\pm-\int_{\Omega}\omega_0^-T\omega_0^+,
\end{align}
where we used $\int_{\Omega}\omega_0^-T\omega_0^+=\int_{\Omega}\omega_0^+T\omega_0^-$ because $T$ is symmetric. Also note that $\int_{\Omega}\omega_0^-T\omega_0^+<0$ because $T$ is positive, i.e. $T\omega_0^+>0$ in $\Omega$. Thus
\begin{equation}\Psi'(\omega_0^\pm)\omega_0^\pm=\int_{\om}|\omega_0^\pm|^{\frac{p+1}{p}}
-\int_{\om}\omega_0^\pm T\omega_0^\pm=\int_{\Omega}\omega_0^-T\omega_0^+<0.\end{equation}
Then there exists $t^\pm\in (0,1)$ such that
\begin{align}\label{new 4.38}
    \Psi'(t^\pm\omega_0^\pm)t^\pm\omega_0^\pm
    =(t^\pm)^{\frac{p+1}{p}}\int_{\om}|\omega_0^\pm|^{\frac{p+1}{p}}-(t^\pm)^2\int_{\om}
   \omega_0^\pm T\omega_0^\pm =0,
\end{align}
i.e. $t^\pm\omega_0^\pm\in \mc N$, where $\mc N$ is given by \eqref{mcN}.
Consequently, it follows from Proposition \ref{new prop4.1} and \eqref{new4.36} that
\begin{align*}
    J(u_*)\leq \Psi(t^\pm\omega_0^\pm)
    =&(\frac{p}{p+1}-\frac{1}{2})(t^+)^{\frac{p+1}{p}}\int_{\om}|\omega_0^\pm|^{\frac{p+1}{p}}
    <(\frac{p}{p+1}-\frac{1}{2})\int_{\om}|\omega_0^\pm|^{\frac{p+1}{p}}\\
    =&\frac{p}{p+1}\int_{\om}|\omega_0^\pm|^{\frac{p+1}{p}}-\frac12\int_{\om}\omega_0^\pm T\omega_0^\pm-\frac12\int_{\Omega}\omega_0^-T\omega_0^+\\
    =&\Psi(\omega_0^\pm)-\frac12\int_{\Omega}\omega_0^-T\omega_0^+.
\end{align*}
From here and \eqref{dual-fun}, we finally obtain
\begin{align*}
J(u_0)=\Psi(\omega_0)
=\Psi(\omega_0^+)+\Psi(\omega_0^-)-\int_{\om}\omega_0^-T\omega_0^+>2J(u_*).
\end{align*}
This completes the proof.
\end{proof}

Now we give the energy estimate of least energy nodal solutions.

\begin{Proposition}\label{Lem26}
Let $(u_p)_{p>1}$ be a family of least energy nodal solutions of \eqref{P}. Then
\begin{align}\label{s2-2-8}
    \lim_{p\to\infty}pJ_p(u_p)=\lim_{p\to\infty}p(\frac{1}{2}-\frac{1}{p+1})\int_{\om}| u_p|^{p+1}= 64\pi^2e.
\end{align}
Consequently, \eqref{new 1,5} holds.
\end{Proposition}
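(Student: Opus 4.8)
The plan is to derive the energy estimate \eqref{s2-2-8} by combining the "double energy" property (Proposition \ref{Lem27}) with the known asymptotics \eqref{eq-wei} for the positive least energy solution $u_*$, plus a matching upper bound obtained by a judicious test-function construction in the dual formulation. The first step is the lower bound: by Proposition \ref{Lem27}, $J_p(u_p)=J_p(u_0)>2J_p(u_*)$, so using $pJ_p(u_*)\to 64\pi^2 e$ from Santra--Wei's result \eqref{eq-wei} (together with the Pohozaev/Nehari identity $pJ_p(u_*)=p(\tfrac12-\tfrac1{p+1})\int_\Omega|u_*|^{p+1}=\tfrac{p(p-1)}{2(p+1)}\int_\Omega|u_*|^{p+1}\to 64\pi^2 e$) we get $\liminf_{p\to\infty} pJ_p(u_p)\ge 128\pi^2 e$. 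This reduces everything to the reverse inequality $\limsup_{p\to\infty} pJ_p(u_p)\le 128\pi^2 e$.

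For the upper bound I would work in the dual picture: by Proposition \ref{new prop4.1}(iv), $pJ_p(u_0)=p\,\Psi_p(\omega_0)=p\inf_{\omega\in\mc M}\Psi_p(\omega)$, so it suffices to exhibit \emph{some} $\omega=\omega^+-\omega^-\in\mc M$ with $p\,\Psi_p(\omega)\le 128\pi^2 e+o_p(1)$. The natural candidate is to place two copies of the (dual of the) standard single-bubble profile at two far-apart interior points $x^+\ne x^-$: roughly, let $\omega^\pm$ be supported in small disjoint balls $B_{\delta}(x^\pm)$ and modeled on $|u_*|^{p-1}u_*$-type concentrating functions, so that each $T_p\omega^\pm$ looks like (a rescaled) least-energy solution near $x^\pm$ and like $\approx c\,p^{-1}G(\cdot,x^\pm)$ away from it. Since the balls are disjoint and $G$, $T_p$ have the decay in Lemma \ref{Lemma 2.1}, the cross term $\int_\Omega \omega^- T_p\omega^+$ is of lower order, $O(p^{-2})$, so $\Psi_p(\omega)=\Psi_p(\omega^+)+\Psi_p(\omega^-)-\int_\Omega\omega^-T_p\omega^+$ essentially splits; after rescaling each $\omega^\pm$ by the appropriate $t^\pm\to 1$ to land on $\mc M$, each piece contributes $J_p(u_*)+o_p(1/p)$, giving $pJ_p(u_0)\le 2pJ_p(u_*)+o_p(1)\to 128\pi^2 e$.

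Putting the two bounds together yields $\lim_{p\to\infty} pJ_p(u_p)=128\pi^2 e=2\cdot 64\pi^2 e$; wait — one must be careful with the stated constant. The proposition claims the limit is $64\pi^2 e$, so the bookkeeping must be: $pJ_p(u_p)=p(\tfrac12-\tfrac1{p+1})\int_\Omega|u_p|^{p+1}$, and one shows this $\to 64\pi^2 e$, which then forces $p\int_\Omega|u_p|^{p+1}\to 128\pi^2 e$ (since $\tfrac12-\tfrac1{p+1}\to\tfrac12$), matching \eqref{new 1,5}; correspondingly the \emph{positive} least energy solution satisfies $pJ_p(u_*)\to 32\pi^2 e$, i.e. the normalization in \eqref{eq-wei} gives $p\int_\Omega|u_*|^{p+1}\to 64\pi^2 e$ and hence $pJ_p(u_*)\to 32\pi^2 e$. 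With that corrected arithmetic the lower bound reads $pJ_p(u_p)>2pJ_p(u_*)\to 64\pi^2 e$ and the upper-bound construction gives $\le 64\pi^2 e+o_p(1)$, closing the argument. The conclusion \eqref{new 1,5} then follows since $p\int_\Omega|\Delta u_p|^2=p\int_\Omega|u_p|^{p+1}$ (testing \eqref{P} with $u_p$) and $p(\tfrac12-\tfrac1{p+1})\int_\Omega|u_p|^{p+1}=\tfrac{p-1}{p+1}\cdot\tfrac{p}{2}\int_\Omega|u_p|^{p+1}$, so $\lim p\int_\Omega|u_p|^{p+1}=128\pi^2 e$.

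The main obstacle is the upper-bound test-function construction: one needs a concentrating family in $L^{(p+1)/p}(\Omega)$ whose image under $T_p$ genuinely mimics two well-separated copies of the Santra--Wei positive solution, with enough quantitative control (via Lemma \ref{Lemma 2.1}) to show the interaction term is $o_p(1/p)$ and the scaling factors $t^\pm\to 1$ at rate $o_p(1)$, so that no energy is lost. A cleaner alternative that avoids building bubbles by hand: take $\omega^\pm = t^\pm \omega_{*,\pm}$ where $\omega_{*,\pm}$ are the dual minimizers for the positive least energy solutions on two \emph{disjoint subdomains} $\Omega^\pm\Subset\Omega$ (legitimate by Remark \ref{rmk}, applied to small balls), extended by zero; then $\int_\Omega\omega^-T_p\omega^+$ vanishes outright once one checks $T_p$ restricted appropriately dominates the subdomain solution operators, and only the comparison of $J$ on a subdomain with $J$ on $\Omega$ (monotonicity of least energy under domain inclusion, again from positivity of $G$) needs justification. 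I expect this subdomain route to be the technically lightest path, with the one delicate point being to show the subdomain least energies still satisfy $p\,J\to 32\pi^2 e$, which follows because \eqref{eq-wei} applies verbatim to any fixed $C^{4,\alpha}$ domain satisfying (G).
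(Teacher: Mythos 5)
Your proposal is correct in outline and takes essentially the same route as the paper: the lower bound from Proposition \ref{Lem27} combined with Santra--Wei's asymptotics (after your corrected bookkeeping, $pJ_p(u_*)\to 32\pi^2e$), and the upper bound by building a dual test function from least energy solutions of the ball problems on two disjoint balls (legitimate by Remark \ref{rmk}), extended by zero and rescaled by $t_p,s_p$ into $\mathcal{M}_p$, with the interaction term of order $O(p^{-2})$ --- which is exactly the paper's construction. One caveat: in your ``cleaner alternative'' the claim that $\int_\Omega\omega^- T_p\omega^+$ vanishes outright is false, since under (G) one has $T_p\omega^+>0$ in all of $\Omega$, so the cross term is strictly negative; the paper instead proves it is $O(p^{-2})$ via Lemma \ref{Lemma 2.1}, and the quantitative points you flag as the main obstacle (that $t_p,s_p$ have $\limsup\le 1$ and that the self-interaction defect $\int_\Omega(|\omega_{p,i}|^{\frac{p+1}{p}}-\omega_{p,i}T_p\omega_{p,i})$ is $O(p^{-2})$) are precisely what the paper's proof supplies using the Green representation formula and the Santra--Wei boundary estimates \eqref{new 4.9}.
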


\bp Let $\tilde{u}_p$ be a least energy solution of \eqref{P}. Then as pointed out in \eqref{eq-wei}, it follows from \cite{wei} that
\[\lim_{p\to\infty}pJ_p(\tilde{u}_p)=\lim_{p\to\infty}p(\frac{1}{2}-\frac{1}{p+1})\int_{\om}
|\tilde{u}_p|^{p+1}= 32\pi^2e.\]
Since Proposition \ref{Lem27} proves $J_p(u_p)>2J_p(\tilde{u}_p)$, we obtain
\[
    \liminf_{p\to\infty}pJ_p(u_p)\geq 64\pi^2e.
\]
So it suffices to prove that
\begin{align}\label{s2-2-8-1}
    \limsup_{p\to\infty}pJ_p(u_p)\leq 64\pi^2e.
\end{align}

Take $a_1,a_2\in\om$, $r>0$ such that $B_{2r}(a_1),B_{2r}(a_2)\subset\om$ and $B_{2r}(a_1)\cap B_{2r}(a_2)=\emptyset$.
By Proposition $\ref{new prop4.1}$ and Remark \ref{rmk}, the corresponding equation on the ball $B_r(a_i)$
\begin{equation}
	 \begin{cases}
\Delta^2 u =|u|^{p-1}u \quad &\hbox{in}\;B_r(a_i), \\
u=\frac{\partial u}{\partial \nu}=0 \ \ &\hbox{on}\;\partial B_r(a_i),
\end{cases}
\end{equation}
has a least energy solution $u_{p,i}$, and we may assume
\[\text{ $u_{p,1}(x)>0$ for $x\in B_{r}(a_1)\quad$ and $\quad u_{p,2}(x)<0$ for $x\in B_{r}(a_2)$}.\]
By \cite[Theorem 1.2 and  Corollary 1.3]{wei}, we have that as $p\to\infty$,
\begin{align}\label{new 4.8}
    p\int_{B_r(a_i)}|u_{p,i}|^p\rt64\pi^2\sqrt{e} \quad\text{ and }\quad p\int_{B_r(a_i)}|u_{p,i}|^{p+1}\rt64\pi^2 e,\quad
\end{align}
and for any compact subset $\mathscr{K} \Subset \overline{B}_r(a_i)\setminus \{a_i\}$, there exists a constant $C(\mathscr{K})$ independent of $p$ such that
\begin{align}\label{new 4.9}
    p|\nabla^m u_{p,i}(x)| \leq C(\mathscr{K}), \quad x\in \mathscr{K},\; m=0,1,2,3.
\end{align}

We can consider $u_{p,i}\in L^{p+1}(\Omega)$ by defining $u_{p,i}(x):=0$ for $x\in \Omega\setminus B_{r}(a_i)$.
Then as mentioned before, the linear problem
\begin{equation}\label{2.7}
	 \begin{cases}
\Delta^2 v=\omega_{p,i}:=|u_{p,i}|^{p-1}u_{p,i} \quad &\hbox{in}\;\om ,\\
v=\frac{\partial v}{\partial \nu}=0 \ \ &\hbox{on}\;\partial \om
\end{cases}
\end{equation}
has a unique solution $T_p\omega_{p,i}\in H_0^2(\om)$  such that
\begin{align}
     T_p\omega_{p,1} >0 \,\,\text{ and }\,\, T_p\omega_{p,2} <0\,\, \text{     in } \,\,\Omega.
\end{align}
Recalling the dual functional $\Psi_{p}(\omega)$ in \eqref{dual-fun} and the nodal Nehari set $\mc M_p$ in \eqref{mcM}, since \[\omega_{p,1}\geq 0,\quad \omega_{p,2}\leq 0,\quad\text{supp}\omega_{p,1}\cap \text{supp}\omega_{p,2}=\emptyset,\] it follows from \cite[Lemma 3.3]{C.O. Alves2016JDE}
that there exist $t_p,\ s_p>0$ such that $t_p\omega_{p,1}+s_p\omega_{p,2}\in \mc M_p,$ namely
\begin{align*}
    \Psi_p'(t_p\omega_{p,1}+s_p\omega_{p,2})\omega_{p,1}=0\,\,\text{ and }\,\, \Psi_p'(t_p\omega_{p,1}+s_p\omega_{p,2})\omega_{p,2}=0,
\end{align*}
or equivalently
\begin{equation}\label{2.8-1}
  t_p^{\frac{1}{p}}\int_{\om}|\omega_{p,1}|^{\frac{p+1}{p}}-t_p\int_{\om}\omega_{p,1}T_p\omega_{p,1}=s_p\int_{\om}\omega_{p,1}T_p\omega_{p,2},
\end{equation}
\begin{equation}\label{2.8-2}
  s_p^{\frac{1}{p}}\int_{\om}|\omega_{p,2}|^{\frac{p+1}{p}}-s_p\int_{\om}\omega_{p,2}T_p\omega_{p,2}=t_p\int_{\om}\omega_{p,2}T_p\omega_{p,1}.
\end{equation}
We claim that
\begin{equation}\label{ts}
\limsup_{p\to\infty}t_p\leq 1,\quad\limsup_{p\to\infty}s_p\leq 1.
\end{equation}
Once \eqref{ts} is proved, then since $u_p$ is a least energy nodal solution of \eqref{P}, it follows from Proposition \ref{new prop4.1}-(iv), \eqref{dual-fun1} and \eqref{new 4.8} that
\begin{align*}
  pJ_p(u_p)\leq &p\Psi_p (t_p\omega_{p,1}+s_p\omega_{p,2})= \frac{p(p-1)}{2(p+1)} \int_{\om} |t_p \omega_{p,1}+s_p\omega_{p,2}|^{\frac{p+1}{p}}\\
  =&\frac{p(p-1)}{2(p+1)}t_p^{\frac{p+1}{p}}\int_{B_r(a_1)}|u_{p,1}|^{p+1}+\frac{p(p-1)}{2(p+1)}s_p^{\frac{p+1}{p}}\int_{B_r(a_2)}|u_{p,2}|^{p+1}\\
  \leq &64\pi^2e+o(1),\quad \text{as }p\to\infty,
\end{align*}
namely \eqref{s2-2-8-1} holds.

To prove \eqref{ts}, we need to estimate those integrals in \eqref{2.8-1}-\eqref{2.8-2}.
By the Green representation formula and \eqref{new 4.8}, we have for any $x\in B_r(a_2)$,
\begin{align}
     \left|p T_p \omega_{p,1}(x)\right| =&p \left|\int_{\Omega} G(x,y) \omega_{p,1}(y) dy\right|\leq p\int_{B_r(a_1)} |G(x,y)||u_{p,1}(y)|^p dy\notag\\
     \leq & Cp\int_{B_r(a_1)} |u_{p,1}(y)|^p dy\leq C,
\end{align}
where we used $|G(x,y)|\leq C$ for $x\in B_r(a_2)$ and $y\in B_r(a_1)$ because $B_{2r}(a_1)\cap B_{2r}(a_2)=\emptyset$.
Hence,
\begin{align*}
    p^2\left|\int_{\om}\omega_{p,2}(x)T_p\omega_{p,1}(x)dx\right|\leq&p^2\int_{B_r(a_2)}|u_{p,2}(x)|^p\left|T_p\omega_{p,1}(x)\right|dx\\
    \leq&C p\int_{B_r(a_2)} |u_{p,2}(y)|^p dy \leq C.
\end{align*}
Since $T_p$ is symmetric, we obtain
\begin{align}\label{new4.18}
    \int_{\om}\omega_{p,2}T_p\omega_{p,1}=\int_{\om}\omega_{p,1}T_p\omega_{p,2}=O(\frac{1}{p^2}).
\end{align}

Denote \[v_{p,i}:=pu_{p,i}-p T_p \omega_{p,i},\quad i=1,2.\] It is clear that
\begin{equation}\label{eq-vi}
	 \begin{cases}
\Delta^2 v_{p,i} =0 \quad &\hbox{in}\;B_r(a_i) \\
v_{p,i}=-p T\omega_{p,i},\quad \frac{\partial v_{p,i}}{\partial \nu}=\frac{\partial (-p T\omega_{p,i})}{\partial \nu} \ \ &\hbox{on}\;\partial B_r(a_i).
\end{cases}
\end{equation}

Note from \eqref{2-3} that $|G(x,y)|\leq C(1+|\log|x-y||)$.
Again by the Green representation formula, it follows from \eqref{new 4.8} that for any $x \in \partial B_r(a_i)$,
{\allowdisplaybreaks
\begin{align}
    \left|p T_p \omega_{p,i}(x)\right| =&p \left|\int_{\Omega} G(x,y) \omega_{p,i}(y) dy\right|\leq p\int_{B_r(a_i)}|G(x,y)||u_{p,i}(y)|^{p}dy\notag\\
     \leq &C p\int_{B_r(a_i)\setminus B_{\frac{r}{2}}(a_i)} |\log|x-y|| |u_{p,i}(y)|^p dy\notag\\
     &+C p\int_{B_{\frac{r}{2}}(a_i)} |\log|x-y|| |u_{p,i}(y)|^p dy+C p\int_{B_r(a_i)} |u_{p,i}(y)|^p dy\notag\\
     \leq &C p\int_{B_r(a_i)\setminus B_{\frac{r}{2}}(a_i)} |\log|x-y|| |u_{p,i}(y)|^p dy+C\notag\\
\leq &\frac{C^{p}}{p^{p-1}}\int_{B_{2r}(x)} |\log|x-y||dy+C\leq C,
\end{align}
}%
where we have used $\left\Vert u_{p,i}\right\Vert_{L^{\infty}(B_r(a_i)\setminus B_{r/2}(a_i))}\leq C/p$ by (\ref{new 4.9}).
Thus $p T_p \omega_{p,i}(x)$ is uniformly bounded on $\partial B_r(a_i)$. A similar argument shows that $\frac{\partial (p T_p \omega_{p,i})}{\partial \nu}$ is uniformly bounded on $\partial B_r(a_1)$.
Then by \eqref{eq-vi} and the Green representation formula, we get
\begin{align}
    |v_{p,i}|\leq C \,\,\text{ in }\,\,\ B_r(a_i),\quad i=1,2.
\end{align}
Noting from $\omega_{p,i}=|u_{p,i}|^{p-1}u_{p,i}$ that
\[|\omega_{p,i}|^{\frac{p+1}{p}}-\omega_{p,i}T_p\omega_{p,i}=|u_{p,i}|^{p-1}u_{p,i}(u_{p,i}-
T_p\omega_{p,i})=\frac1p |u_{p,i}|^{p-1}u_{p,i}v_{p,i},\]
it follows that
\begin{align*}
    p^2\left|\int_{\om}\left(|\omega_{p,i}|^{\frac{p+1}{p}}-\omega_{p,i}T_p\omega_{p,i}\right)\right|\leq p\int_{B_r(a_i)}|u_{p,i}|^p|v_{p,i}|\leq C.
\end{align*}
So we have
\begin{align}\label{new 4.25}
    \int_{\om}\left(|\omega_{p,i}|^{\frac{p+1}{p}}-\omega_{p,i}T_p\omega_{p,i}\right)=O(\frac{1}{p^2}),\quad i=1,2.
\end{align}

Now we can apply \eqref{new4.18} and \eqref{new 4.25} to prove the claim \eqref{ts}.  Without loss of generality, we may assume $t_p\geq s_p$ up to a subsequence. First we show that $t_p, s_p$ are uniformly bounded.
By contradiction, we assume that
\begin{align}
    t_p \to +\infty\,\,\text{ and }\,\,\frac{s_p}{t_p} \to C_0\in [0,1].
\end{align}
Then by (\ref{new 4.8}), (\ref{2.8-1}),  (\ref{new4.18}) and (\ref{new 4.25}), we obtain
\begin{align*}
    -64\pi^2 e+o(1)
    =&\left(t_p^{\frac{1}{p}-1}-1\right) p\int_{\om}|u_{p,1}|^{p+1}=\left(t_p^{\frac{1}{p}-1}-1\right) p\int_{\om}|\omega_{p,1}|^{\frac{p+1}{p}}\notag\\
    =& p\int_{\om}\left(\omega_{p,1}T_p\omega_{p,1}-|\omega_{p,1}|^{\frac{p+1}{p}}\right)+\frac{s_p}{t_p} p\int_{\om}\omega_{p,1}T_p\omega_{p,2}\notag\\
    =&O(\frac{1}{p})+(C_0+o(1))O(\frac{1}{p})=o(1),
\end{align*}
which is a contradiction. So $t_p$ and $s_p$ are uniformly bounded.
Assume by contradiction that up to a subsequence,
\[
    t_p \to C_1,\quad s_p \to C_2\quad\text{with }\; \max\{C_1, C_2\}>1,
\]
say $C_1> 1$ for example.
Then by (\ref{new 4.8}), (\ref{2.8-1}) and (\ref{new4.18}), it follows that
\begin{align*}
    (1-C_1)64\pi^2e +o(1)
    = &\left(t_p^{\frac{1}{p}}-t_p\right) p\int_{\om}|u_{p,1}|^{p+1}=\left(t_p^{\frac{1}{p}}-t_p\right) p\int_{\om}|\omega_{p,1}|^{\frac{p+1}{p}}\notag\\
    =& t_p p\int_{\om}\left(\omega_{p,1}T_p\omega_{p,1}-|\omega_{p,1}|^{\frac{p+1}{p}}\right)+s_p p\int_{\om}\omega_{p,1}T_p\omega_{p,2}\notag\\
    = &o(1),
\end{align*}
again a contradiction. This proves the claim \eqref{ts}, so the proof is complete.
\ep

\bl\label{Lemm27} \cite[Lemma 3.4]{wei}
For any $u\in H_0^2(\om)$ and $s>1$, we have $(\int_{\Omega}|u|^s)^{\frac1s}\leq D_ss^{\frac{1}{2}}(\int_{\om}|\dl u|^2)^{\frac{1}{2}},$ where $D_s\rt(64\pi^2e)^{-\frac{1}{2}}$ as $s\rt\iy.$ Remark that the condition (G) is not needed in this result.
\el

\bprop\label{prop4.1}
Let $(u_p)_{p>1}$ be a family of least energy nodal solutions of $(\ref{PP})$. Then
\begin{align}\label{u+}
    p\int_{\om}|u_p^+|^{p+1}\rt64\pi^2e,\quad p\int_{\om}|u_p^-|^{p+1}\rt64\pi^2e.
\end{align}
\eprop
\bp
Denote $\nu_p^+:=|u^+_p|^{p-1}u^+_p$. Then the linear problem
\begin{equation}\label{5.2}
	 \begin{cases}
\Delta^2 v=\nu_p^+ =|u^+_p|^{p-1}u^+_p \quad &\hbox{in}\;\om, \\
v=\frac{\partial v}{\partial \nu}=0 \ \ &\hbox{on}\;\partial \om,
\end{cases}
\end{equation}
has a unique solution $T_p\nu_p^+\in H_0^2(\om)$ which satisfies $T_p\nu_p^+>0$ in $\Omega$.
Multiplying \eqref{5.2} by $T_p\nu_p^+$ and integrating over $\Omega$ leads to
\begin{align*}
  \int_{\om}|\dl T_p\nu_p^+|^2=&\int_{\om} \nu_p^+T_p\nu_p^+\leq\int_{\om}|u_p^+|^p T_p\nu_p^+\notag\\
  \leq&\left(\int_{\om}|u_p^+|^{p+1}\right)^{\frac{p}{p+1}}\left(\int_{\om}(T_p\nu_p^+)^{p+1}\right)^{\frac{1}{p+1}}.
\end{align*}
Similarly, multiplying $u_p$ and using $T_p\nu_p^+>0$ we get
\begin{align*}
    \int_{\om}|u_p^+|^{p+1}=&\int_{\om}u_p\dl^2 T_p\nu_p^+=\int_{\om}T_p\nu_p^+\dl^2 u_p\notag\\
    =&\int_{\om}|u_p|^{p-1}u_pT_p\nu_p^+\leq\int_{\om}|u_p^+|^{p}T_p\nu_p^+\notag\\
    =&\int_{\om}T_p\nu_p^+\dl^2T_p\nu_p^+=\int_{\om}|\dl T_p\nu_p^+|^2.
\end{align*}
From here and Lemma \ref{Lemm27}, we have
\begin{align*}
    64\pi^2e+o(1)=&D_{p+1}^{-2}\leq (p+1)\frac{\int_{\om}|\dl T_p \nu_p^+|^2}{\left(\int_{\om}|T_p\nu_p^+|^{p+1}\right)^\frac{2}{p+1}}\\
    \leq &
    (p+1)\frac{\left(\int_{\om}|\dl T_p \nu_p^+|^2\right)^2}{\left(\int_{\om}|T_p\nu_p^+|^{p+1}\right)^\frac{2}{p+1}\int_{\om}|u_p^+|^{p+1}}\notag\\
    \leq& (p+1)\left(\int_{\om}|u_p^+|^{p+1}\right)^{\frac{p-1}{p+1}}.
\end{align*}
In the same way, we get
$$64\pi^2e+o(1)\leq (p+1)\left(\int_{\om}|u_p^-|^{p+1}\right)^{\frac{p-1}{p+1}}.$$
Since Proposition \ref{Lem26} shows $p\int_{\om}
|u_p|^{p+1}=p\int_{\Omega}|\Delta u_p|^2\to 128\pi^2 e$, we finally obtain \eqref{u+}.
\ep

\section{Blow up analysis of general nodal solutions}
\label{section-3}

In this section, we study the bubbling phenomenon of general nodal solutions (i.e. not just least energy nodal solutions, so we do not need to assume the condition (G) here).
First we recall some known results. The first lemma follows from \cite{Mitidieri1993identity}, and here we use the version from  \cite{wei}.

\bl\label{Pohozaev}\cite{Mitidieri1993identity,wei} (Pohozaev identity)

Let $u\in C^4(\overline{\om})$ be a solution of $\Delta^2u=|u|^{p-1}u$. Then
\begin{align*}
&\frac{4}{p+1}\int_{\Omega}|u|^{p+1}dx\\
=&\int_{\partial\Omega}\lan x-y,\nu\ran \frac{|u|^{p+1}}{p+1}+\frac{1}{2}\lan x-y,\nu \ran (\Delta u)^2-2\frac{\partial u}{\partial \nu} \Delta u dS \\
    -&\int_{\partial\Omega}\frac{\partial (\Delta u)}{\partial \nu}\lan x-y,\nabla u\ran+\frac{\partial u}{\partial \nu}\lan x-y,\nabla (\Delta u)\ran-\lan\nabla u,\nabla (\Delta u)\ran\lan x-y,  \nu\ran dS,
\end{align*}
where $\nu$ is the outer normal vector of $\pa \om$.
\el

The next lemma is the classification result for
\begin{equation}\label{P0}
	 \begin{cases}
	      \Delta^2 W =e^{W} \quad &\hbox{in}\;\mathbb{R}^4,\\
	      \int_{\mathbb{R}^4}e^W<\iy,
		   \end{cases}
\end{equation}
which was obtained by C. S. Lin in \cite{lin1998}.

\bl\cite{lin1998}\label{3} Let $W$ be a solution of \eqref{P0} and
define $$\al:=\frac{1}{32\pi^2}\int_{\R^4}e^{W(x)}dx.$$
Then the following statements hold.
\begin{itemize}
 \item[(1)] After an orthogonal transformation, $W(x)$ can be represented by
\begin{align}\label{21}
  W(x)=&\frac{1}{8\pi^2}\int_{\R^4}\log \frac{|y|}{|x-y|}e^{W(y)}dy-\sum_{j=1}^4a_j(x_j-x_j^0)^2+c_0\notag\\
   =&-\sum_{j=i}^{4}a_j(x_j-x_j^0)^2-4\al\log|x|+c_0+o(1)\quad\text{as} \quad |x|\rt\iy,
\end{align}
where $a_j \geq0$ and $c_0$ are constants, and $x^0=(x_{1}^0,x_{2}^0,x_{3}^0,x_{4}^0)\in \R^4.$ The function $\Delta W$ satisfies
\be\label{22}
\dl W(x)=-\frac{1}{4\pi^2}\int_{\R^4}\frac{e^{W(y)}}{|x-y|^2}dy-2\sum_{j=1}^{4}a_j.
\ee
\item[(2)] The total integration $\alpha\leq 2$.
 If $\al=2$, then all $a_j=0$ and $W$ has the form
\be\label{23}
W(x)=4\log\frac{2\lambda}{1+\lambda^2|x-x^0|^2}+\log24,\quad \lambda>0.
\ee
\item[(3)] If $W(x)=o(|x|^2)$ at $\iy$, then $\al=2.$
\end{itemize}
\el

In the rest of this section, we always assume that
 $(u_p)_{p>1}$ is a family of nodal solutions to (\ref{PP}) satisfying
\begin{equation}\label{3.1}
   p\int_{\om}|\dl u_p|^2dx= p\int_{\om}|u_p|^{p+1}dx \leq C
\end{equation}
for some constant $C$ independent of $p>1$ (Proposition \ref{Lem26} shows that \eqref{3.1} holds automatically for least energy nodal solutions).
Then by H\"{o}lder inequality,
\be\label{est-v-1}
	p\int_\Omega |u_p|^p
	\le \left(p\int_\Omega |u_p|^{p+1}\right)^{\frac{p}{p+1}}(p|\Omega|)^{\frac{1}{p+1}}
	\le C.
\ee
As in Section 1, we let $x_p^{\pm}\in\Omega$ such that
\[u_p(x_p^+)=\Vert u_p^+ \Vert_\infty,\quad
u_p(x_p^-)=-\Vert u_p^- \Vert_\infty,\]
and without loss of generality, we may assume
\[\Vert u_p\Vert_{\infty}=\Vert u_p^+\Vert_{\infty},\quad\text{i.e.}\quad \Vert u_p^+ \Vert_\infty\geq \Vert u_p^- \Vert_\infty.\]
It is known (see e.g. \cite{GGS}) that
$$\lambda_1:=\inf\limits_{\phi\in H_0^2(\om)\setminus\{0\}}\frac{\int_{\om}|\dl\phi(x)|^2}{\int_{\om}|\phi(x)|^2}>0.$$

\begin{Proposition}\label{24}
For any $p>1$, we have
$|u_p(x_p^+)|^{p-1}=\Vert u_p\Vert_{\iy}^{p-1}\geq \lambda_1>0$.
In particular,
$$\liminf_{p\to\infty} \|u_p\|_{\infty}\geq 1,\quad\lim\limits_{p\rt\iy}p\Vert u_p\Vert_{\iy}^{p-1}=+\iy.$$
\end{Proposition}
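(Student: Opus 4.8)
The plan is to test the equation against the first eigenfunction $\phi_1>0$ of $\Delta^2$ on $H_0^2(\Omega)$, or more directly to exploit the variational characterization of $\lambda_1$ applied to $u_p$ itself. Since $u_p\in H_0^2(\Omega)$ and $u_p\not\equiv 0$, we have $\int_\Omega|\Delta u_p|^2\geq\lambda_1\int_\Omega u_p^2$. On the other hand, multiplying $\Delta^2 u_p=|u_p|^{p-1}u_p$ by $u_p$ and integrating gives $\int_\Omega|\Delta u_p|^2=\int_\Omega|u_p|^{p+1}$. Combining these, $\int_\Omega|u_p|^{p+1}\geq\lambda_1\int_\Omega u_p^2$, i.e. $\int_\Omega(|u_p|^{p-1}-\lambda_1)u_p^2\geq 0$.

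Now I would argue by contradiction: if $\|u_p\|_\infty^{p-1}<\lambda_1$, then $|u_p(x)|^{p-1}\leq\|u_p\|_\infty^{p-1}<\lambda_1$ for every $x\in\Omega$, so $(|u_p(x)|^{p-1}-\lambda_1)u_p(x)^2\leq 0$ pointwise, with strict inequality on the (positive-measure) set where $u_p\neq 0$. This forces $\int_\Omega(|u_p|^{p-1}-\lambda_1)u_p^2<0$, contradicting the inequality derived above. Hence $\|u_p\|_\infty^{p-1}=|u_p(x_p^+)|^{p-1}\geq\lambda_1$ for all $p>1$, which is exactly the first assertion.

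The two limiting statements then follow by elementary manipulation. From $\|u_p\|_\infty^{p-1}\geq\lambda_1>0$ we get $\|u_p\|_\infty\geq\lambda_1^{1/(p-1)}\to 1$ as $p\to\infty$, giving $\liminf_{p\to\infty}\|u_p\|_\infty\geq 1$. For the last claim, note $p\|u_p\|_\infty^{p-1}\geq p\lambda_1\to+\infty$ directly, since $\lambda_1$ is a fixed positive constant. So in fact the divergence of $p\|u_p\|_\infty^{p-1}$ needs nothing beyond the pointwise lower bound just established.

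I do not expect a genuine obstacle here: the only thing to be slightly careful about is that the inequality $\int_\Omega|\Delta u_p|^2\geq\lambda_1\int_\Omega u_p^2$ is the definition of $\lambda_1$ applied to the admissible test function $\phi=u_p$, and that the strictness in the contradiction step uses $u_p\not\equiv 0$ (true since $u_p$ is a nodal solution, so $u_p^\pm\neq 0$) together with $\lambda_1>0$. Everything else is bookkeeping.
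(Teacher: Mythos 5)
Your argument is correct and is essentially the same as the paper's: both rest on the variational characterization $\lambda_1\int_\Omega u_p^2\leq\int_\Omega|\Delta u_p|^2=\int_\Omega|u_p|^{p+1}\leq\|u_p\|_\infty^{p-1}\int_\Omega u_p^2$ followed by division by $\int_\Omega u_p^2>0$. You phrase the final step as a contradiction, while the paper simply cancels the common factor, but the content is identical.
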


\begin{proof}
Since $u_p$ satisfies (\ref{PP}), we have
$$\la_1\int_{\om}|u_p|^2\leq \int_{\om}|\dl u_p|^2=\int_{\om}|u_p|^{p+1}\leq u_p(x_p^+)^{p-1}\int_{\om}|u_p|^2,$$
which completes the proof.
\end{proof}

\begin{Proposition}\label{Proposition 2.1}
 There exists $C>0$ independent of $p$ such that $\Vert u_p\Vert_{\iy}\leq C.$
\end{Proposition}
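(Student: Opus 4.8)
The plan is to argue by contradiction: suppose that along a subsequence $\|u_p\|_\infty=u_p(x_p^+)\to+\infty$. The natural strategy, following Santra--Wei \cite{wei}, is to perform the rescaling \eqref{wp+} around the maximum point $x_p^+$ and extract a limiting profile solving \eqref{P0}, then use a Pohozaev-type identity to derive a lower bound on the local energy $p\int|u_p|^{p+1}$ that violates \eqref{3.1} once we can show there must be \emph{two} bubbles (one around $x_p^+$, one around $x_p^-$, since $pu_p^-$ also blows up for the least-energy nodal solution, or more generally once both parts contribute). Concretely, first I would set $\e_p:=\e_p^+$, so $\e_p^{-4}=p\|u_p\|_\infty^{p-1}\to+\infty$ by Proposition \ref{24}, hence $\e_p\to 0$. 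Define $W_p(x):=W_{p^+}(x)=p\frac{u_p(x_p^++\e_p x)-u_p(x_p^+)}{u_p(x_p^+)}$ on $\Omega(\e_p^+)$. A direct computation shows $\Delta^2 W_p(x)=p\,\e_p^4\,\frac{|u_p|^{p-1}u_p(x_p^++\e_p x)}{u_p(x_p^+)}=\frac{|u_p(x_p^++\e_p x)|^{p-1}u_p(x_p^++\e_p x)}{\|u_p\|_\infty^{p}}$, which is bounded in absolute value by $|1+W_p/p|^{p}$ wherever $u_p>0$ near the max, so the right-hand side is locally bounded. Since $W_p\le 0$, $W_p(0)=0$, $\nabla W_p(0)=0$, elliptic estimates give local $C^4$ bounds and (up to a subsequence) $W_p\to W$ in $C^4_{loc}$ of a domain which must be all of $\R^4$: otherwise $W_p$ would blow down to $-\infty$ near a finite boundary point, contradicting $W_p\le 0$ together with Harnack/Schauder control — this is the standard argument that the rescaled domain exhausts $\R^4$ because $\e_p^{-1}d(x_p^+,\partial\Omega)$ and $\e_p^{-1}d(x_p^+,NS_p)$ cannot both stay bounded once one controls the profile.

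The limit $W$ then solves $\Delta^2 W=e^W$ in $\R^4$ with $W(0)=0=\sup W$, and by Fatou and \eqref{est-v-1},
\[
\int_{\R^4}e^{W}\,dx\le\liminf_{p\to\infty}\int_{\Omega(\e_p^+)}\Bigl|1+\tfrac{W_p}{p}\Bigr|^{p}\,dx
=\liminf_{p\to\infty}p\,\e_p^{-4}\|u_p\|_\infty^{-p}\!\int_{B}|u_p|^{p}
=\liminf_{p\to\infty}\frac{p\int_B|u_p|^p}{\|u_p\|_\infty}\le\liminf_{p\to\infty}\frac{C}{\|u_p\|_\infty}=0
\]
if $\|u_p\|_\infty\to\infty$. (Here I used $p\,\e_p^4\|u_p\|_\infty^{p-1}=1$, i.e. $p\,\e_p^4\|u_p\|_\infty^{p}=\|u_p\|_\infty$, and the bound $p\int_\Omega|u_p|^p\le C$ from \eqref{est-v-1}.) But $\int_{\R^4}e^W=0$ is impossible since $e^W>0$; this is already a contradiction, so $\|u_p\|_\infty$ must stay bounded. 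Thus the key mechanism is that the normalization forces the total mass of $e^{W_p}$ to be comparable to $p\int|u_p|^p/\|u_p\|_\infty$, which is $o(1)$ precisely when the sup diverges.

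The main obstacle — and the step that needs real care — is establishing the $C^4_{loc}$ convergence of $W_p$ on an exhausting sequence of domains, in particular ruling out the possibility that the rescaled zero set $NS_p/\e_p^+$ (after translation) accumulates near the origin, which would prevent passage to an entire limit. This is exactly the difficulty flagged in the Remark after Theorem \ref{th12}: without the condition (B) one does not a priori know $d(x_p^+,NS_p)/\e_p^+\to\infty$ from a comparison with the negative part. For the positive maximum point $x_p^+$, though, one can run the argument purely locally where $u_p>0$: the rescaled equation $\Delta^2 W_p=(1+W_p/p)_+^{p}$ (interpreting the right side as $0$ where $u_p\le 0$) still has locally bounded right-hand side, $W_p\le 0$ everywhere, and $W_p(0)=0$, so Schauder estimates on balls where we have an a priori $L^\infty$ bound of the RHS yield equicontinuity of $W_p$ and all derivatives up to order $4$; a standard diagonal/continuation argument then shows the convergence holds on all of $\R^4$ and the limit is $W\le 0$ with $W(0)=0$ solving $\Delta^2W=e^W$ (the RHS cannot degenerate to $0$ on a half-space without forcing $W\equiv-\infty$ there, contradicting $C^4_{loc}$ convergence from the finite values $W_p$). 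Once this compactness is in hand, the mass computation above closes the argument. I would also remark that this proposition does not require (G), consistent with the section heading.
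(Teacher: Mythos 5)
Your overall strategy (contradiction, rescaling at $x_p^+$, and the mass identity $\int_{\Omega(\e_p^+)}|1+W_{p^+}/p|^p=\frac{p\int_\Omega|u_p|^p}{\|u_p\|_\infty}\le \frac{C}{\|u_p\|_\infty}$ via \eqref{est-v-1}) could in principle be closed, but the step on which everything hinges --- local $C^4$ (or even local $L^\infty$) compactness of $W_{p^+}$ near $0$ --- is not justified by the tools you invoke, and this is a genuine gap. For the fourth-order operator $\Delta^2$ there is no maximum principle or Harnack inequality on balls, so ``$W_p\le 0$, $W_p(0)=0$, bounded right-hand side, hence Schauder gives equicontinuity'' does not work: interior Schauder/$L^q$ estimates for $\Delta^2 W_p=f$ control $W_p$ on a ball only in terms of $\|f\|_\infty$ \emph{and} a norm of $W_p$ on a larger ball, and a one-sided bound plus the value at one point cannot substitute for that (this is precisely the second-order reasoning that fails here). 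The missing ingredient is the a priori bound $|\nabla^i W_{p^+}(x)|\le CR^{4-i}$ on $B_R(0)\cap\Omega(\e_p^+)$, which in the paper is obtained \emph{not} by local elliptic estimates on $W_{p^+}$ but by the Green representation of $\nabla^i u_p$ together with the Green function estimates \eqref{2-4} of Lemma \ref{Lemma 2.1} and the bound $p\int_\Omega|u_p|^p\le C$ (this is the content of Lemma \ref{Lem25} and Proposition \ref{prop22}, which do not presuppose $\|u_p\|_\infty\le C$); your proposal never uses the Green representation at this stage, so the passage to an entire limit $W$ with $W(0)=0$, and hence the Fatou contradiction $\int e^W=0$, is unsupported. (Your concern about the rescaled nodal set is a red herring for the maximum point: the equation for $W_{p^+}$ holds across $NS_p$; also note a harmless slip, an extraneous factor $p$ in the intermediate expression $p\,\e_p^{-4}\|u_p\|_\infty^{-p}\int|u_p|^p$, although your final quotient is correct.)

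You should also be aware that the paper's own proof of this proposition is far simpler and requires no blow-up at all: since $p\int_\Omega|u_p|^{p+1}\le C$ by \eqref{3.1}, one writes $u_p(x)=\int_\Omega G(x,y)|u_p|^{p-1}u_p\,dy$, applies H\"older with exponents $p+1$ and $\frac{p+1}{p}$, and uses the uniform estimate $\|G(x,\cdot)\|_{L^{p+1}(\Omega)}^{p+1}\le C(p+1)^{p+2}$ from \eqref{ggg} together with $\|u_p\|_{L^{p+1}}^p\le (C/p)^{p/(p+1)}$; the product is bounded uniformly in $p$. So even after repairing the compactness step (by importing the Lemma \ref{Lem25}--type estimates), your route is considerably heavier than what is needed, and as written it does not constitute a proof.
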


\begin{proof}

By \eqref{2-3}, it was proved in \cite[Proposition 2.7]{Grossi2013Poincare} that there is $C>0$ independent of $x\in\Omega$ such that
\begin{equation}\label{ggg}\|{G(x,\cdot)}\|_{L^{p+1}(\Omega)}^{p+1}\leq C(p+1)^{p+2},\quad \text{for $p$ large enough}.\end{equation}
Then
by the Green representation formula, for any $x\in\Omega$,
$$
	\begin{aligned}
		|u_p(x)|
		&=\Big|\int_\Omega G(x,y)|u_p(y)|^{p-1}u_p(y) dy\Big|\le \|G(x,\cdot)\|_{L^{p+1}(\Omega)}\|{u_p}\|_{L^{p+1}(\Omega)}^p\\
		&\le C^{\frac1{p+1}}(p+1)^{\frac{p+2}{p+1}}\left(\frac{C}{p}\right)^{\frac{p}{p+1}}\le C,\quad\text{for $p$ large enough},
	\end{aligned}
$$
so $\limsup_{p\to\iy}\|{u_p}\|_\iy\le C$.
\end{proof}

Let $W_{p^+}$, $\varepsilon_p^+$,  $\Omega(\varepsilon_p^+)$ and $NS_p$ be defined in \eqref{wp+}-\eqref{nsp}.
It is clear that $W_{p^+}$ satisfies that
\begin{equation}\label{Pp+}
\begin{cases}
\Delta^2 W_{p^+} =\left|1+\frac{W_{p^+}}{p}\right|^{p-1}\left(1+\frac{W_{p^+}}{p}\right) \quad &\hbox{in}\;\Omega(\varepsilon_p^+), \\
W_{p^+}=-p,\frac{\partial W_{p^+}}{\partial \nu}=0 \ &\hbox{on}\;\partial\Omega(\varepsilon_p^+).
\end{cases}
\end{equation}

\bl\label{Lem25}
We have
\[\lim\limits_{p\rt\iy}\frac{d(x_p^+,\pa\om)}{\e_p^+}=\iy.\]
As a result, $\Omega(\varepsilon_p^+)$ converges to $\mathbb{R}^4$ as $p\rt\iy$.
\el

\begin{proof}

The proof can be seen in \cite[Lemma 3.2]{wei}, and we sketch it here for later usage.
Assume by contradiction that up to a subsequence, $\frac{d(x_p^+,\pa\om)}{\e_p^+}\leq C$, then up to a rotation,  $\Omega(\varepsilon_p^{+})=\frac{\Omega-x_p^+}{\varepsilon_p^+}\rt(-l,\iy)\times\R^3$ for some $0\leq l<\iy$. Take $R>\max\{l,1\}$  and let $x\in B_R(0)\cap\Omega(\varepsilon_p^{+})$. Then by the Green representation formula and \eqref{2-4}, we have that for $1\leq i\leq 3$,
\begin{align}\label{as}
  |\nabla^i W_{p^{+}}(x)|&=\frac{p(\e_p^{+})^i}{|u_p(x_p^{+})|}|\nabla^iu_p(x_p^{+}+\e_p^{+}x)| \nonumber\\&=\frac{p(\e_p^{+})^i}{|u_p(x_p^{+})|}\Big |\int_{\om}\nabla_x^iG(x_p^{+}+\e_p^{+}x,y)|u_p(y)|^{p-1}u_p(y)dy\Big|\nonumber\\
  &\leq C\frac{p(\e_p^{+})^i}{|u_p(x_p^{+})|}\int_{B_{2\e_p^{+}R}(x_p^{+})}\frac{1}{|x_p^{+}+\e_p^{+}x-y|^i}|u_p(y)|^pdy\\
  &\quad+C\frac{p(\e_p^{+})^i}{|u_p(x_p^{+})|}\int_{\om\backslash B_{2\e_p^{+}R}(x_p^{+})}\frac{1}{|x_p^{+}+\e_p^{+}x-y|^i}|u_p(y)|^pdy\nonumber\\
  &=I_1+I_2.\nonumber
\end{align}
Note that for $y\in\om\backslash B_{2\e_p^{+}R}(x_p^{+})$, $|x_p^{+}+\e_p^{+}x-y|\geq|x_p^{+}-y|-\e_p^{+}|x|\geq R\e_p^{+}\geq \e_p^{+}$, so we see from Proposition \ref{24} and \eqref{est-v-1} that
\[I_2\le C p\int_{\Omega}|u_p|^pdy\leq C.\]
For $y\in B_{2\e_p^{+}R}(x_p^{+})$, we let $y-x_p^{+}=\e_p^{+}z$ with $z\in B_{2R}(0)$, then it follows from \be\label{2-6}(\e_p^{+})^{-4}=p\|u_p\|_{\infty}^{p-1}=p\|u_p^{+}\|_{\infty}^{p-1}=p|u_p(x_p^{+})|^{p-1},\ee that
\begin{align*}
I_1&\leq C\frac{p(\varepsilon_p^{+})^4|u_p(x_p^+)|^p}{|u_p(x_p^+)|}\int_{B_{2R}(0)}\frac{1}{|x-z|^i}dz
\\
&= C\int_{B_{2R}(0)}\frac{1}{|x-z|^i}dz\leq C\int_{B_{2R}(0)}\frac{1}{|z|^i}dz=CR^{4-i}.
\end{align*}
Therefore, for any $i=1,2,3$,
\be\label{2-9} |\nabla^i W_{p^{+}}(x)|\leq CR^{4-i}+C\leq CR^{4-i},\quad\forall x\in B_R(0)\cap\Omega(\varepsilon_p^{+}),\ee
and then
\be\label{2-9-0}|W_{p^{+}}(x)|=|W_{p^{+}}(x)-W_{p^{+}}(0)|\leq CR^4,\quad\forall x\in \overline{B_R(0)\cap\Omega(\varepsilon_p^{+})}.\end{equation}
From here and $R>l$, we see that there is some point $x_0\in\partial\Omega(\varepsilon_p^{+})$ such that
$p=|W_{p^{+}}(x_0)|\leq C$, a contradiction with $p\to+\infty$.
This proves $\lim\limits_{p\rt\iy}\frac{d(x_p^+,\pa\om)}{\e_p^+}=\infty$.
\end{proof}

\bprop\label{prop22}
Up to a subsequence, $W_{p^+}$ defined on $\Omega(\varepsilon_p^+)$ converges to $W$ in $C_{loc}^4(\mathbb{R}^4)$ as $p\to \infty$, where $W$ solves (\ref{P0}) and $W(x)=-4\log(1+{|x|^2}/{8\sqrt{6}})$.
\eprop

\begin{proof}
The proof can be seen in \cite[Lemma 3.3]{wei}, and we sketch it here for later usage.
By \eqref{2-9}-\eqref{2-9-0}, we have for any fixed $R>1$, there exists $C>0$ such that $|\nabla ^iW_{p^+}(x)| \leq C$ for all $p>1$ and $x\in B_{R}(0),\ i=0,1,2,3$.
Then by standard elliptic estimates, up to a subsequence, there exists $W(x)\leq 0$ such that $ W_{p^{+}}\rt W \text{ in } C_{loc}^4(\R^4)$, and $W$ satisfies $\dl^2W=e^{W}$ in $\mathbb{R}^4$ with $W(0)=0$.
Recall the definition \eqref{wp+} of $W_{p^+}$ that
$1+W_{p^+}(x)/p={u_p(x_p^++\varepsilon_p^+x)}/{u_p(x_p^+)}$,
we see from Fatou Lemma, $(\varepsilon_p^+)^{-4}=pu_p(x_p^+)^{p-1}$, \eqref{est-v-1} and Proposition \ref{24} that
\begin{align*}
    \int_{\R^4}e^{W}dx\leq\liminf_{p\rt\iy} \int_{\Omega(\varepsilon_p^+)}\left|1+\frac{W_{p^+}}{p}\right|^{p}
    =\liminf_{p\rt\iy}
 \frac{p}{u_p(x_p^+)}\int_{\om}\left|u_p\right|^p\leq C.
\end{align*}

Now we show that $ \int_{B_R(0)}|\dl W|dx\leq CR^2.$ Similarly as \eqref{as}, we have
\begin{align*}
    |\dl W_{p^+}(x)|
    \leq C \frac{p(\e_{p}^+)^2}{u_p(x_p^+)}\int_{\om}\frac{|u_p(y)|^{p}}{|x_p^++\e_p^+x-y|^2}dy.
\end{align*}
So for any fixed $R>0$, by Fubini Theorem we have
\begin{align}\label{2.20}
    \int_{B_R(0)}\left|\dl W_{p^+}(x)\right|dx\leq& \frac{Cp}{u_p(x_p^+)}\int_{\om}\left|u_p(y)\right|^p
    \left(\int_{B_R(0)}\frac{(\e_p^+)^2dx}{\left|x_p^++\e_p^+x-y\right|^2}\right)dy.
\end{align}
Changing the variable $z:=x-(y-x_p^+)/\varepsilon_p^+$ gives
\begin{align*}
    \int_{B_R(0)}\frac{(\e_p^+)^2dx}{\left|x_p^++\e_p^+x-y\right|^2}
    =\int_{B_R\left(\frac{x_p^+-y}{\e_p^+}\right)}\frac{dz}{\left|z\right|^2}
    \leq\int_{B_R(0)}\frac{dz}{\left|z\right|^2} \leq CR^2,
\end{align*}
so we see from (\ref{2.20}) and \eqref{est-v-1} that
\begin{align*}
    \int_{B_R(0)}\left|\dl W_{p^+}\right|dx\leq& \frac{CR^2}{u(x_p^+)}p\int_{\Omega} \left|u(y) \right|^p dy\leq CR^2.
\end{align*}
 Letting $p\rt\iy$, it follows that $\int_{B_R(0)}|\dl W|dx\leq CR^2$ for any $R>1$. Then \eqref{22} implies $a_j=0$ for all $j$ and so \eqref{21} gives $W(x)=-4\al\log|x|+c_0+o(1)=o(|x|^2)$ as $|x|\rt\iy.$ Consequently, Lemma \ref{3} implies $\alpha=2$, $W$ is radially symmetric around some point $x^0$ and $\int_{\R^4}e^{W}=64\pi^2.$ Since $W(x)\leq W(0)=0$, we finally get from \eqref{23} that $W(x)=-4\log(1+{|x|^2}/{8\sqrt{6}}).$
\end{proof}

Up to a subsequence, we assume \[\lim_{p\to\infty}x_p^+=x^+\in\overline{\Omega}.\]
Define the blow-up points set of $p u_{p}$ as
\begin{align}
\s=\{x\in\overline{\Omega}:\text{ there exist subsequences }\{p_n\}\ \text{and\ }\{x_n\}\subset\Omega \notag\\
\text{  such that } x_n  \to x \text{ and } |p_n u_{p_n}(x_n)|\to \infty \text{ as } p_n \to \infty\}.
\end{align}
Then $x^+\in \s$. To show that $\s$ is a finite set,
we assume that there exist $n\geq 1$ families of points $(x_{j,p})_{p>1}\in \om$ such that as $p\rt\iy$, $$\e_{j,p}^{-4}:=p|u_p(x_{j,p})|^{p-1}\rt\iy,\quad j=1,2,\ldots,n.$$
Then $$\liminf_{p\to\infty}|u_p(x_{j,p})|\geq1,$$ and so up to a subsequence,
$$\{\lim\limits_{p\rt\iy} x_{j,p} :j=1,2,\ldots,n\} \subset \s  \subset\overline{\om}.$$
We may always take \begin{equation}\label{xp+}x_{1,p}=x_p^+,\quad\text{and so}\quad\e_{1,p}=\e_p^+.\end{equation}
Define \begin{equation}\label{ffc}R_{n,p}(x):=\inf\limits_{j=1,\ldots,n}|x-x_{j,p}|^4,\end{equation}
and we introduce the following properties:
\begin{itemize}
\item[$(\p_1^n)$] For any $j,i\in\{1,2,\ldots,n\},$ $j\neq i,$ $\lim\limits_{p\rt\iy}\frac{|x_{i,p}-x_{j,p}|}{\e_{j,p}}=\iy$.
\item[$(\p_2^n)$] For any $j\in\{1,2,\ldots,n\}$, after passing to a subsequence, $$W_{j,p}(x):=\frac{p}{u_p(x_{j,p})}\left(u_p(x_{j,p}+\e_{j,p}x)-u_p(x_{j,p})\right)\rt W(x)$$ in $C_{loc}^4(\R^4)$ as $p\rt\iy,$ where $W(x)=-4\log(1+\frac{|x|^2}{8\sqrt{6}})$.\\
\item[$(\p_3^n)$] There exists $C>0$ such that $pR_{n,p}(x)|u_p(x)|^{p-1}\leq C$ for all $p>1$ and $x\in\om.$
\end{itemize}
Proposition \ref{prop22} and \eqref{xp+} shows that $(\p_1^1)$-$(\p_2^1)$ hold. Note also that if $(\p_3^n)$ holds for some $n\geq 1$, then we can not find the $(n+1)$-th family of points $(x_{n+1,p})_{p>1}\in \om$ such that $\e_{n+1,p}^{-4}:=p|u_p(x_{n+1,p})|^{p-1}\rt\iy$ and $(\p_1^{n+1})$ holds, i.e. $(\p_1^{n+1})$-$(\p_2^{n+1})$ can not hold.


\begin{Proposition}\label{prop31}
There exists $k\in \N_{\geq 1}$ and $k$ families of points $(x_{j,p})_{p>1}$ in $\om,$ $j=1,\ldots,k$ such that, after passing to a sequence, the following statements hold.
\begin{itemize}
\item[(1)] $(\p_1^k),\ (\p_2^k)$ and $(\p_3^k)$ hold. Consequently, we can not find the $(k+1)$-th family of points $(x_{k+1,p})_{p>1}\in \om$ such that $(\p_1^{k+1})$-$ (\p_2^{k+1})$ hold.
\item[(2)] $$\s=\{\lim\limits_{p\rt\iy} x_{j,p} :j=1,2,\ldots,k\},$$
so $\s$ is finite.
Furthermore, given any compact subset $\mathscr{K}\Subset\overline{\Omega}\setminus\s$, there exists a constant $C(\mathscr{K})>0$ independent of $p$ such that
\begin{align}\label{31}
  p \left\|\nabla^m u_p\right\|_{L^\infty(\mathscr{K})}\leq C(\mathscr{K}) ,\quad  m=0,1,2,3.
\end{align}
\end{itemize}
\end{Proposition}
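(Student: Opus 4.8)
The goal is to extract a maximal finite family of "bubble points" from the blow-up set $\s$ by an iterative selection procedure, and then prove that away from these finitely many points, $pu_p$ enjoys uniform $C^3$ bounds. I would organize the argument in three stages: (i) the iteration that produces the families $(x_{j,p})$ together with properties $(\p_1^n)$–$(\p_2^n)$; (ii) the proof that this iteration terminates after finitely many steps, i.e. that $(\p_3^k)$ holds for the final $k$; and (iii) the passage from $(\p_3^k)$ to the statement $\s=\{\lim_p x_{j,p}\}$ and the gradient estimate \eqref{31}.

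\textbf{Stage (i): the selection.} Start with $x_{1,p}=x_p^+$, for which Proposition \ref{prop22} and \eqref{xp+} already give $(\p_1^1)$–$(\p_2^1)$. Suppose families $x_{1,p},\ldots,x_{n,p}$ have been chosen so that $(\p_1^n)$–$(\p_2^n)$ hold. If $(\p_3^n)$ fails, then by definition there is a sequence $y_p\in\Omega$ with $pR_{n,p}(y_p)|u_p(y_p)|^{p-1}\to\infty$; in particular $p|u_p(y_p)|^{p-1}\to\infty$ (since $R_{n,p}$ is bounded on $\Omega$), so setting $x_{n+1,p}:=y_p$ gives $\e_{n+1,p}^{-4}=p|u_p(x_{n+1,p})|^{p-1}\to\infty$ and $\liminf_p|u_p(x_{n+1,p})|\ge1$. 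The condition $R_{n,p}(y_p)/\e_{n+1,p}^4\to\infty$ says exactly $|x_{n+1,p}-x_{j,p}|/\e_{n+1,p}\to\infty$ for each $j\le n$, which is the new instance in $(\p_1^{n+1})$ relative to the old points; combined with the inductive hypothesis (and symmetry of the condition in the indices, after possibly relabeling/rescaling) one gets the full $(\p_1^{n+1})$. To obtain $(\p_2^{n+1})$ one reruns the blow-up analysis of Lemma \ref{Lem25} and Proposition \ref{prop22} at the new point: the key point is that $(\p_1^{n+1})$ guarantees the rescaled domain $\frac{\Omega_p-x_{n+1,p}}{\e_{n+1,p}}$ still exhausts $\mathbb R^4$ and that the rescaled function $W_{n+1,p}$ is not contaminated by the other bubbles — the mass contributed by the balls around $x_{j,p}$, $j\ne n+1$, is $O(1)$ in the rescaled variable and disappears in the limit because those bubbles are "infinitely far away" in the $\e_{n+1,p}$-scale. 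The uniform $L^1$ bound on $e^W$ again comes from \eqref{est-v-1} and $\liminf|u_p(x_{n+1,p})|\ge1$, and Lin's classification (Lemma \ref{3}) together with the $\int_{B_R}|\Delta W|\le CR^2$ estimate forces $\int_{\mathbb R^4}e^W=64\pi^2$ and $W(x)=-4\log(1+|x|^2/8\sqrt6)$.

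\textbf{Stage (ii): termination.} \emph{This is the main obstacle.} Each bubble carries a definite quantum of the energy $p\int_\Omega|u_p|^{p+1}$: roughly, for each $j$, a Fatou/change-of-variables argument on $B_{R\e_{j,p}}(x_{j,p})$ shows $\liminf_p p\int_{B_{R\e_{j,p}}(x_{j,p})}|u_p|^{p+1}\ge c>0$ for $R$ large, with $c$ close to $\int_{\mathbb R^4}e^W=64\pi^2$ (more precisely $64\pi^2\cdot\liminf|u_p(x_{j,p})|$, which is $\ge64\pi^2$). Property $(\p_1^n)$ ensures the balls $B_{R\e_{j,p}}(x_{j,p})$, $j=1,\ldots,n$, are pairwise disjoint for $p$ large, so summing gives $n\cdot c\le\liminf_p p\int_\Omega|u_p|^{p+1}\le C$ by the standing hypothesis \eqref{3.1}. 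Hence $n$ is bounded, the iteration stops, and we let $k$ be the terminal index, for which $(\p_1^k)$–$(\p_3^k)$ all hold. The final sentence of (1) is then immediate from the remark preceding the proposition: if an $(k{+}1)$-st family with $(\p_1^{k+1})$ existed, $(\p_3^k)$ would be violated at those points.

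\textbf{Stage (iii): from $(\p_3^k)$ to \eqref{31}.} First, $\s\subset\{\lim_p x_{j,p}:j\le k\}$: if $x\in\s$ were a new point, then $(\p_3^k)$ would fail along the sequence realizing $x$ (since $R_{k,p}$ stays bounded below near $x$ while $p|u_p|^{p-1}\to\infty$ there), contradiction; the reverse inclusion is clear since each $\liminf_p|u_p(x_{j,p})|\ge1$ forces $|pu_p(x_{j,p})|\to\infty$. So $\s$ is finite. For the gradient bound, fix a compact $\mathscr K\Subset\overline\Omega\setminus\s$ and pick $\delta>0$ with $\mathrm{dist}(\mathscr K,\s)>2\delta$; then $R_{k,p}(x)\ge c(\delta)>0$ for $x$ near $\mathscr K$ and $p$ large, so $(\p_3^k)$ yields $p|u_p(x)|^{p-1}\le C(\delta)$, i.e. $|u_p|\le 1+o(1)$ and in fact $p\|u_p\|_{L^\infty(\text{nbhd of }\mathscr K)}$ is controlled after noting $|u_p|^p=|u_p|\cdot|u_p|^{p-1}$ together with the $L^1$-type bound \eqref{est-v-1}. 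Then the Green representation formula $\nabla^m u_p(x)=\int_\Omega\nabla_x^m G(x,y)|u_p(y)|^{p-1}u_p(y)\,dy$, split into the integral over $B_\delta(x)$ (where one uses $|\nabla^m G|\le C|x-y|^{-m}$ from \eqref{2-4} and the pointwise bound on $|u_p|^{p-1}$, the $|x-y|^{-m}$ singularity being integrable for $m\le3$) and the integral over $\Omega\setminus B_\delta(x)$ (where $|\nabla^m G|\le C\delta^{-m}$ and one uses $p\int_\Omega|u_p|^p\le C$), gives $p|\nabla^m u_p(x)|\le C(\mathscr K)$ for $m=0,1,2,3$, as claimed. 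This mirrors the estimates already carried out in the proof of Lemma \ref{Lem25}.
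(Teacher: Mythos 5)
Your overall scheme (iterative extraction of bubbles, energy quantization to stop the induction, Green-representation estimates away from the concentration set) is exactly the standard one; the paper itself simply quotes \cite[Lemma 4.1]{wei} for it. However, there is a genuine gap in your Stage (i): when $(\p_3^n)$ fails you take $x_{n+1,p}$ to be \emph{any} sequence along which $pR_{n,p}(x)|u_p(x)|^{p-1}\to\infty$, whereas the argument requires $x_{n+1,p}$ to be a maximum point of $x\mapsto pR_{n,p}(x)|u_p(x)|^{p-1}$ on $\overline\om$. The maximality is what gives, for $|y-x_{n+1,p}|\le R\,\e_{n+1,p}$, the comparison $|u_p(y)|^{p-1}\le \frac{R_{n,p}(x_{n+1,p})}{R_{n,p}(y)}|u_p(x_{n+1,p})|^{p-1}=(1+o(1))|u_p(x_{n+1,p})|^{p-1}$, since $R_{n,p}(x_{n+1,p})/\e_{n+1,p}^4\to\infty$ forces $R_{n,p}(y)/R_{n,p}(x_{n+1,p})\to1$ on such balls. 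Without this you cannot ``rerun'' Lemma \ref{Lem25} and Proposition \ref{prop22} at the new point: their key step (the bound on $I_1$, hence the local $C^3$ bounds on the rescaled function and the exhaustion of $\R^4$) uses $|u_p(y)|\le\|u_p\|_\infty=|u_p(x_p^+)|$ near the blow-up point, and the identification of the limit via Lin's classification uses $W\le W(0)=0$, i.e.\ that the center is an (asymptotic) local maximum at its own scale. An arbitrary divergent sequence need satisfy neither property — nearby values of $|u_p|$ could be much larger than $|u_p(x_{n+1,p})|$ — so $(\p_2^{n+1})$, and with it your termination argument in Stage (ii), is not justified as written. Relatedly, your claim that the new instances of $(\p_1^{n+1})$ follow by ``symmetry'' is not right: the divergence $|x_{n+1,p}-x_{j,p}|/\e_{j,p}\to\infty$ (old scales) does not follow from $|x_{n+1,p}-x_{j,p}|/\e_{n+1,p}\to\infty$; it needs a separate argument, e.g.\ if $|x_{n+1,p}-x_{j,p}|\le C\e_{j,p}$ then $(\p_2^n)$ gives $|u_p(x_{n+1,p})/u_p(x_{j,p})|^{p-1}\le C$, contradicting $pR_{n,p}(x_{n+1,p})|u_p(x_{n+1,p})|^{p-1}\to\infty$.

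A second, more easily repaired, slip is in Stage (iii): the inclusion $\s\subset\{\lim_p x_{j,p}\}$ does not follow from ``$R_{k,p}$ bounded below and $p|u_p|^{p-1}\to\infty$'', because $\s$ is defined through $|p\,u_p(x_n)|\to\infty$, which is compatible with $p|u_p(x_n)|^{p-1}\to0$ (e.g.\ $|u_p(x_n)|$ bounded away from $1$). The correct order is the one you essentially already have for \eqref{31}: run the Green-representation estimate on compact sets away from the set $\mathcal T:=\{\lim_p x_{j,p}\}$ (where $(\p_3^k)$ plus $R_{k,p}\ge c(\mathscr K)$ gives $p|u_p|^{p-1}\le C$, and the far field is handled by $p\int_\om|u_p|^p\le C$), conclude $p\|\nabla^m u_p\|_{L^\infty(\mathscr K)}\le C(\mathscr K)$ there, and only then deduce $\s\subset\mathcal T$; the reverse inclusion is immediate from $\liminf_p|u_p(x_{j,p})|\ge1$. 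With the maximizing choice of $x_{n+1,p}$ and this reordering, your proof matches the one the paper refers to.
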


\begin{proof} The proof is almost the same as that of \cite[Lemma 4.1]{wei} with trivial modifications, and we omit it here.
\end{proof}

Recall \eqref{nsp} that $NS_p:=\{x\in\Omega:u_p(x)=0\}$.
\begin{corollary}\label{cor31}
We have
$$\lim_{p\to\infty}\frac{d(x_{j,p},NS_{p})}{\e_{j,p}}=\iy,\quad \text{ for  all}\ j=1,2,\cdots,k.$$
Consequently, by setting $u_{j,p}:=u_{p}\chi_{\om_{j,p}}$, where $\om_{j,p}$ is the nodal domain of $u_{p}$ containing $x_{j,p}$ and $\chi_{\om_{j,p}}$ denotes the characteristic function of $\om_{j,p}$, then
$$\widetilde{W}_{j,p}:=\frac{p}{u_{p}(x_{j,p})}(u_{j,p}(x_{j,p}+\e_{j,p}x)-u_{p}(x_{j,p})),$$
defined on $\frac{\om_{j,p}-x_{j,p}}{\e_{j,p}}$ converges to $W=-4\log(1+{|x|^2}/{8\sqrt{6}})$ in $C^4_{loc}(\mathbb{R}^4)$ as $p\rt\iy$.
\end{corollary}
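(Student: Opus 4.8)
\textbf{Proof proposal for Corollary \ref{cor31}.}

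The plan is to argue by contradiction. Suppose that for some index $j$, along a subsequence, $\frac{d(x_{j,p},NS_p)}{\e_{j,p}}\leq C$. By the definition of $NS_p$ there are points $y_p\in NS_p$ with $|y_p-x_{j,p}|\leq C\e_{j,p}$, so the rescaled points $z_p:=\frac{y_p-x_{j,p}}{\e_{j,p}}$ stay in a fixed ball $\overline{B_C(0)}\subset\R^4$ and, up to a further subsequence, $z_p\to z_0$. Now I would invoke Proposition \ref{prop31}, part (1): $(\p_2^k)$ gives $W_{j,p}\to W$ in $C^4_{loc}(\R^4)$, where $W(x)=-4\log(1+|x|^2/(8\sqrt6))$. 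Since $u_p(y_p)=0$, the definition of $W_{j,p}$ in $(\p_2^n)$ yields
\[
W_{j,p}(z_p)=\frac{p}{u_p(x_{j,p})}\bigl(u_p(y_p)-u_p(x_{j,p})\bigr)=\frac{p}{u_p(x_{j,p})}\cdot\bigl(-u_p(x_{j,p})\bigr)=-p.
\]
But $W_{j,p}(z_p)\to W(z_0)$ because $z_p\to z_0$ lies in a compact set on which $W_{j,p}\to W$ uniformly; hence $W(z_0)=\lim(-p)=-\infty$, contradicting the finiteness of $W(z_0)$. This proves the first assertion: $\lim_{p\to\infty}\frac{d(x_{j,p},NS_p)}{\e_{j,p}}=\infty$ for every $j$.

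For the consequence, fix $j$ and recall $\om_{j,p}$ is the nodal component of $u_p$ containing $x_{j,p}$, so $u_{j,p}=u_p\chi_{\om_{j,p}}$ agrees with $u_p$ on $\om_{j,p}$ and vanishes outside. On the rescaled domain $\Omega_{j,p}':=\frac{\om_{j,p}-x_{j,p}}{\e_{j,p}}$ we have $\widetilde{W}_{j,p}(x)=W_{j,p}(x)$ for $x\in\Omega_{j,p}'$. The key point is that, by the first part, $\Omega_{j,p}'$ exhausts $\R^4$: for any fixed $R>0$, once $p$ is large the ball $B_R(0)$ is contained in $\Omega_{j,p}'$, because the distance (in the rescaled variable) from $0$ to the complement of $\Omega_{j,p}'$ is at least $\frac{d(x_{j,p},\partial\om_{j,p})}{\e_{j,p}}\geq\frac{d(x_{j,p},NS_p\cup\partial\om)}{\e_{j,p}}\to\infty$ — here I use both the first part of the corollary and Lemma \ref{Lem25} (together with its analogue at $x_{j,p}$, which is part of the setup leading to $(\p_2^k)$). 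Therefore, on any fixed $B_R(0)$ and for $p$ large, $\widetilde{W}_{j,p}$ coincides with $W_{j,p}$, and since $W_{j,p}\to W$ in $C^4(\overline{B_R(0)})$ by $(\p_2^k)$, the same holds for $\widetilde{W}_{j,p}$. As $R$ is arbitrary, $\widetilde{W}_{j,p}\to W=-4\log(1+|x|^2/(8\sqrt6))$ in $C^4_{loc}(\R^4)$.

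The main obstacle is making precise that $\Omega_{j,p}'$ really does exhaust $\R^4$, i.e. that the relevant boundary to stay away from is $NS_p\cup\partial\om$ and nothing else. Concretely one must check that $\partial\om_{j,p}\subset NS_p\cup\partial\om$ (a boundary point of a nodal domain either lies on $\partial\om$ or is a zero of $u_p$), and then combine the estimate $\frac{d(x_{j,p},NS_p)}{\e_{j,p}}\to\infty$ just proved with $\frac{d(x_{j,p},\partial\om)}{\e_{j,p}}\to\infty$, the latter being exactly Lemma \ref{Lem25} applied at the blow-up point $x_{j,p}$ (for $j=1$ this is Lemma \ref{Lem25} verbatim; for general $j$ it is the identical argument, valid because $(\p_2^k)$ has already been established). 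Everything else is a routine passage to the limit using the locally uniform $C^4$ convergence already recorded in $(\p_2^k)$.
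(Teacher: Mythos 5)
Your proof of the first assertion is essentially identical to the paper's: contradiction, pick zeros $y_p\in NS_p$ at bounded rescaled distance, pass to a limit $z_0$, observe $W_{j,p}(z_p)=-p$, and contradict the locally uniform convergence $W_{j,p}\to W$ from $(\p_2^k)$. The paper leaves the ``consequently'' part without further comment, and your justification — that $\partial\om_{j,p}\subset NS_p\cup\partial\om$, that both $d(x_{j,p},NS_p)/\e_{j,p}$ and $d(x_{j,p},\partial\om)/\e_{j,p}$ tend to infinity (the latter being implicit in the establishment of $(\p_2^k)$), hence $\frac{\om_{j,p}-x_{j,p}}{\e_{j,p}}$ exhausts $\R^4$ and $\widetilde W_{j,p}$ agrees with $W_{j,p}$ on any fixed ball for $p$ large — is correct and is the intended reasoning.
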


\bp
Assume by contradiction that up to a subsequence, for some $j$,
\begin{align}
    \lim\limits_{p\rt\iy}\frac{d(x_{j,p},NS_{p})}{\e_{j,p}}=M<\iy.
\end{align}
Then up to a subsequence, there exists $y_{j,p}\in NS_{p}$ such that as $p\to \infty$, \begin{align}
    \frac{y_{j,p}-x_{j,p}}{\e_{j,p}}\rt x_{\iy}\quad  \text{ for some } |x_{\iy}|=M,
\end{align}
so by the convergence of $W_{j,p}$ in $(\p_2^{k})$, we have
\begin{align}
-p=W_{j,p} \Big(\frac{y_{j,p}-x_{j,p}}{\e_{j,p}}\Big)\rt W(x_{\iy}),
\end{align}
which is a contradiction.
\ep

\begin{remark}\label{rmk3-9} Recalling Proposition \ref{prop31}, we write
\[\s=\{x_1,\ldots,x_N\}\quad\text{with }\quad x_1=x^+=\lim_{p\to\infty}x_p^+.\]
Since there may happen $\lim\limits_{p\rt\iy} x_{i,p}=\lim\limits_{p\rt\iy} x_{j,p}$ for some $i\neq j\in \{1,\cdots,k\}$, we have $1\leq N\leq k$. Take $r_0>0$ small such that
\begin{align*}
    B_{2r_0}(x_i) \cap B_{2r_0}(x_j)=\emptyset, \quad \forall x_i\neq x_j\in S.
\end{align*}
\end{remark}

As in Section 1, we denote
\begin{equation}\label{omp1}\om_p^+:=\{x\in\om:u_p(x)>0\}, \qquad\om_p^-:=\{x\in\om:u_p(x)<0\}.\end{equation}

\begin{Proposition}\label{prop3.2}
Up to a subsequence, $$pu_{p}(x) \rt\sum_{i=1}^{N}\gamma_i G(x,x_i)\ \ \text{in }C_{loc}^4(\overline{\om}\backslash\s),$$
where\begin{align}
  \gamma_i:=&\lim\limits_{\rho\rt 0} \lim\limits_{p \to \iy} p\int_{B_{\rho }(x_i)}|u_{p}(x)|^{p-1} u_{p}(x)dx=\gamma_i^+-\gamma_i^-, \label{38}\\
  \gamma_i^\pm:=&\lim\limits_{\rho\rt 0}\lim\limits_{p \to \iy} p\int_{B_{\rho }(x_i)\cap \Omega_p^\pm}|u_{p}(x)|^{p}dx.\notag
\end{align}
\end{Proposition}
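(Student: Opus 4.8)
The plan is to use the Green representation formula for $u_p$ together with the local profile described by Proposition \ref{prop31} and Corollary \ref{cor31}. For $x$ in a fixed compact set $\mathscr{K}\Subset\overline{\Omega}\setminus\s$ we write
\[
pu_p(x)=p\int_{\Omega}G(x,y)|u_p(y)|^{p-1}u_p(y)\,dy
=\sum_{i=1}^{N}p\int_{B_{r_0}(x_i)}G(x,y)|u_p(y)|^{p-1}u_p(y)\,dy
+p\int_{\Omega\setminus\bigcup_i B_{r_0}(x_i)}G(x,y)|u_p(y)|^{p-1}u_p(y)\,dy,
\]
with $r_0$ as in Remark \ref{rmk3-9}. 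The last integral is negligible: on $\Omega\setminus\bigcup_i B_{r_0}(x_i)$ the set is a compact subset of $\overline{\Omega}\setminus\s$, so by \eqref{31} one has $p|u_p|^{p-1}\le (C/p)^{p-1}\to 0$ fast, while $\|G(x,\cdot)\|_{L^1}\le C$ uniformly by \eqref{2-3}; hence that term is $o(1)$ in $C^0(\mathscr{K})$, and similarly for derivatives using \eqref{2-4}. So the only contributions come from small balls around the blow-up points $x_i\in\s$.

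Next I would analyze the contribution of a single ball $B_{r_0}(x_i)$. Inside it, the mass of $p|u_p|^{p-1}u_p$ concentrates near the points $x_{j,p}$ with $\lim_p x_{j,p}=x_i$; by the bubble convergence in $(\p_2^k)$ and Corollary \ref{cor31}, after rescaling by $\e_{j,p}$ the quantity $p|u_p|^{p-1}u_p$ converges (in the measure sense) to $\pm e^W$ concentrated at the bubble, whose total mass on $\mathbb{R}^4$ is $\pm\int_{\mathbb R^4}e^W=\pm 64\pi^2$. Summing the signed masses of the bubbles sitting over $x_i$, and using $(\p_3^k)$ to control the region between bubbles (so that no mass escapes), one gets that $p|u_p|^{p-1}u_p\rightharpoonup \gamma_i\,\delta_{x_i}$ as measures on $B_{r_0}(x_i)$, with $\gamma_i=\gamma_i^+-\gamma_i^-$ as defined in \eqref{38}; the existence of the limits defining $\gamma_i$, $\gamma_i^\pm$ follows from a standard diagonal/monotonicity argument after passing to a subsequence. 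Once this concentration is established, for $x\in\mathscr{K}$ the function $y\mapsto G(x,y)$ is smooth on $B_{r_0}(x_i)$ (since $x\notin \overline{B_{r_0}(x_i)}$), so
\[
p\int_{B_{r_0}(x_i)}G(x,y)|u_p(y)|^{p-1}u_p(y)\,dy\to \gamma_i\,G(x,x_i),
\]
and the convergence is in $C^4_{loc}$ because we may differentiate under the integral sign in $x$ and the same argument applies to $\nabla_x^m G(x,y)$, which is still smooth and bounded for $y\in B_{r_0}(x_i)$, $x\in\mathscr{K}$.

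Finally I would assemble the pieces: adding the $N$ local contributions and the negligible remainder gives $pu_p(x)\to\sum_{i=1}^N\gamma_i G(x,x_i)$ in $C^4_{loc}(\overline{\Omega}\setminus\s)$, as claimed. The main obstacle is the concentration step in the middle paragraph, namely proving that \emph{all} of the mass $p\int_{B_{r_0}(x_i)}|u_p|^{p-1}u_p$ is accounted for by the finitely many bubbles — equivalently that $p|u_p|^{p-1}u_p$ has no residual "thick" part spread over $B_{r_0}(x_i)$ away from the $x_{j,p}$. This is exactly where property $(\p_3^k)$ (the uniform bound $pR_{k,p}(x)|u_p(x)|^{p-1}\le C$) is used: it forces $p|u_p(x)|^{p-1}\le C/R_{k,p}(x)$, so on $B_{r_0}(x_i)\setminus\bigcup_j B_{\delta}(x_{j,p})$ one has $p|u_p|^{p}\le C|u_p|/\delta^4$ which is small in $L^1$ for small $\delta$, uniformly in $p$; combined with the exact bubble mass $64\pi^2$ from Lemma \ref{3}, this pins down $\gamma_i$ and yields the weak-$*$ convergence to a sum of Dirac masses. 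One should also check that the signs are correctly bookkept (bubbles over $x_i$ may be positive or negative, and $\gamma_i$ is the net sum), which is handled by Corollary \ref{cor31} identifying each rescaled nodal piece with $+W$ or $-W$.
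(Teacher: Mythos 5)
Your overall route --- Green representation, splitting off neighborhoods of the blow-up points, showing the remainder is negligible, and letting each ball contribute $\gamma_i G(x,x_i)$ --- is the same as the paper's. But the key concentration step, which you yourself flag as the main obstacle, is argued incorrectly. From $(\p_3^k)$ you deduce $p|u_p|^{p}\le C|u_p|/\delta^4$ off the $\delta$-balls and claim this is ``small in $L^1$ for small $\delta$, uniformly in $p$''; it is not. Since $\int_\Omega|u_p|$ is only bounded (not small), this bound gives a contribution of order $\delta^{-4}$, which \emph{grows} as $\delta\to0$. The correct (and simpler) mechanism is \eqref{31}: on any compact subset of $\overline\om\setminus\s$ one has $p\|u_p\|_{L^\infty}\le C$, hence $p|u_p|^{p}\le C^{p}/p^{p-1}\to0$ there; applied to annuli $B_{\rho_1}(x_i)\setminus B_{\rho_2}(x_i)$ this shows the mass of $p|u_p|^{p}$ between radii vanishes as $p\to\infty$, so the subsequential limits defining $\gamma_i^{\pm}$ do not depend on $\rho$ and no diffuse part survives. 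In particular you need neither $(\p_3^k)$ directly nor the exact bubble mass $64\pi^2$: the proposition only \emph{defines} $\gamma_i$ as a limit and makes no claim about its value, and ``pinning down $\gamma_i$'' by counting bubbles is not available at this stage (several bubbles of either sign may sit over the same $x_i$); the values are only identified later, for least energy solutions.

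Two further steps need repair. You assert that $x\in\mathscr{K}$ implies $x\notin\overline{B_{r_0}(x_i)}$; this is false, since $\mathscr{K}$ only avoids the finitely many points of $\s$, not fixed balls around them. You must choose the decomposition radius depending on $\mathscr{K}$ (the paper takes $r$ with $\mathscr{K}\subset\overline\om\setminus\bigcup_i B_{2r}(x_i)$) and then use the gradient bound \eqref{new3.11} to replace $G(x,y)$ by $G(x,x_i)$ up to an error $O(\rho)$, removed by letting $\rho\to0$ after $p\to\infty$. Finally, ``differentiate under the integral sign'' four times is not justified for the remainder term: by \eqref{2-4}, $|\nabla_x^4G(x,y)|\le C|x-y|^{-4}$, which is not locally integrable in $\R^4$ when $y$ ranges near $x$. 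The paper's fix is to run the representation-formula argument only for $m=0,1,2,3$ (for which $\nabla^m_xG(x,\cdot)\in L^1$), obtaining convergence in $C^3_{loc}(\overline\om\setminus\s)$, and then to upgrade to $C^4_{loc}$ by elliptic estimates, using that $\Delta^2(pu_p)=p|u_p|^{p-1}u_p\to0$ uniformly on compact subsets of $\overline\om\setminus\s$ by \eqref{31}. With these corrections your argument coincides with the paper's proof.
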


\begin{proof} The proof is standard. Let $\mathscr{K}\Subset\overline{\om}\backslash\s$ be any compact subset and take $r\in (0,r_0)$ small such that
\begin{align}\label{new3.7}
    \mathscr{K}\subset\overline{\Omega}\backslash\left(\bigcup\limits_{i=1}^N B_{2r}(x_i)\right).
\end{align} Then for $m\in\{0,1,2,3\}$ and $x\in\mathscr{K}$,
it follows from the Green representation formula that
\begin{align}\label{3-22}
    p\nabla^m u_{p}(x)=&\int_{\om}\nabla^m_x G(x,y)p|u_{p}(y)|^{p-1}u_{p}(y)dy\notag\\
    =&\int_{\om_{p}^+} \nabla^m_x G(x,y)p|u_{p}(y)|^{p}dy-\int_{\om_{p}^-}\nabla^m_x G(x,y)p|u_{p}(y)|^{p}dy.
\end{align}
For any $\rho\in (0,r)$, we have
\begin{align}\label{ccc}
    &\int_{\om_{p}^+}\nabla^m_x G(x,y)p|u_{p}(y)|^{p}dy\notag\\ =&\sum_{i=1}^{N}\nabla^m_x G(x,x_i)\int_{\Omega_p^+\cap B_\rho(x_i)}p|u_{p}(y)|^{p}dy\\&+
    \sum_{i=1}^{N}\int_{\Omega_p^+\cap B_\rho(x_i)}\nabla^m_x \left(G(x,y)- G(x,x_i)\right)p|u_{p}(y)|^{p}dy\notag\\
    &+\int_{\om_{p}^+\backslash \bigcup_{i=1}^{N} B_\rho(x_i)} \nabla^m_x G(x,y) p|u_{p}(y)|^{p}dy.\notag
\end{align}
Since (\ref{31}) implies
\begin{align}\label{new3.8}
    \Vert u_{p}\Vert_{L^{\iy}(\om \backslash \bigcup_{i=1}^{N} B_{\rho}(x_i))}\leq \frac{C(\rho)}{p},
\end{align}
and Lemma \ref{Lemma 2.1} implies
\[
    \int_{\om_{p}^+\backslash \bigcup_{i=1}^{N} B_\rho(x_i)} |\nabla^m_x G(x,y)|dy\leq \int_{\om} |\nabla^m_x G(x,y)|dy \leq C,
\]
we obtain
\begin{align}\label{995}
   \left| \int_{\om_{p}^+\backslash \bigcup_{i=1}^{N} B_\rho(x_i)} \nabla^m_x G(x,y) p|u_{p}(y)|^{p}dy\right|\leq C\frac{(C(\rho))^{p}}{p^{p-1}}.
\end{align}
On the other hand,  by (\ref{new3.7}) and Lemma \ref{Lemma 2.1} we have
\begin{align}\label{new3.11}
    |\nabla_y\nabla_x^mG(x,y)|\leq C,\quad\forall x\in\mathscr{K},\; y\in \cup_{i=1}^N B_{r}(x_i),
\end{align}
so
\begin{align}
    &\left|\int_{\Omega_p^+\cap B_\rho(x_i)}\nabla^m_x \left(G(x,y)- G(x,x_i)\right)p|u_{p}(y)|^{p}dy\right|\notag\\
    \leq&\int_{\Omega_p^+\cap B_\rho(x_i)}\rho|\nabla_y\nabla_x^mG(x,\lambda y+(1-\lambda)x_i)|p|u_{p}(y)|^{p}dy\leq C\rho,\label{new3.12}
\end{align}
where $\lambda\in (0,1)$ depends on $y$.
Inserting (\ref{995}) and (\ref{new3.12}) into \eqref{ccc} we get
\begin{align}
   \lim\limits_{p\to \infty} \int_{\om_{p}^+}\nabla^m_x G(x,y)p|u_{p}(y)|^{p}dy=\sum_{i=1}^{N}\gamma_i^+ G(x,x_i).
\end{align}
Similarly, we can prove \[\lim\limits_{p\to \infty} \int_{\om_{p}^-}\nabla^m_x G(x,y)p|u_{p}(y)|^{p}dy=\sum_{i=1}^{N}\gamma_i^- G(x,x_i).\]
From here and \eqref{3-22}, we conclude that $pu_{p}(x) \rt\sum_{i=1}^{N}\gamma_i G(x,x_i)$ in $C_{loc}^3(\overline{\om}\backslash\s)$. Then by standard elliptic estimates, the convergence is also in $C_{loc}^4(\overline{\om}\backslash\s)$.
\end{proof}

\begin{remark}
A challenging open problem is to determine the possible values of $\gamma_i$ for general nodal solutions. In the next section, we will prove $\gamma_i=\pm 64\pi^2\sqrt{e}$ for least energy nodal solutions.
\end{remark}

Next, we will prove that no boundary blow-up occurs.
\begin{Proposition}\label{prop 33}
$\s\cap\pa\om=\emptyset$.
\end{Proposition}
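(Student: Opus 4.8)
The plan is to argue by contradiction, supposing $x_i \in \s \cap \partial\Omega$ for some $i$, and derive a contradiction with the boundary conditions $u_p = \partial u_p/\partial\nu = 0$ on $\partial\Omega$ by a Pohozaev-type argument localized near $x_i$. First I would fix $r \in (0, r_0)$ so that $B_{2r}(x_i)$ contains no other blow-up point of $\s$, and work on $\Omega \cap B_{2r}(x_i)$. Applying the Pohozaev identity (Lemma~\ref{Pohozaev}) — or rather a localized version of it obtained by multiplying $\Delta^2 u_p = |u_p|^{p-1}u_p$ by $\langle x - x_i, \nabla u_p\rangle$ and integrating over $\Omega \cap B_{2r}(x_i)$ — produces an identity in which the left side is $\frac{4}{p+1}\int_{\Omega\cap B_{2r}(x_i)}|u_p|^{p+1}$, the boundary terms on $\partial B_{2r}(x_i) \cap \Omega$ are controlled by \eqref{31} (since that sphere stays away from $\s$, there $p|\nabla^m u_p| \le C$, so these terms are $O(p^{-2})$), and the boundary terms on $\partial\Omega \cap B_{2r}(x_i)$ must be analyzed carefully. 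The point is that on $\partial\Omega$ we have $u_p = 0$ and $\nabla u_p = (\partial u_p/\partial\nu)\nu$, so the $|u_p|^{p+1}$ term and the $\langle\nabla u_p, \nabla(\Delta u_p)\rangle$-type terms involving tangential derivatives simplify, and the identity reduces to an expression essentially of the form $\frac{4}{p+1}\int |u_p|^{p+1} + o(p^{-2}) = \frac{1}{2}\int_{\partial\Omega\cap B_{2r}(x_i)} \langle x - x_i, \nu\rangle (\Delta u_p)^2\, dS + (\text{lower order})$.

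The key structural fact I would exploit is that, after multiplying by $p^2$, the left side $p^2 \cdot \frac{4}{p+1}\int_{\Omega\cap B_{2r}(x_i)}|u_p|^{p+1} \to 4\sum_{x_j = x_i}\gamma_j\cdot(\text{something positive})$; more precisely, using \eqref{new 1,5}/\eqref{3.1} and the bubble analysis, $p\int_{\Omega \cap B_{2r}(x_i)}|u_p|^{p+1}$ converges to a \emph{positive} limit bounded below by a universal constant (each genuine blow-up point carries energy $\ge 64\pi^2 e$-ish by the bubble $W$), so the left side is bounded below by $c/p$ for some $c > 0$, hence $p^2 \times (\text{LHS}) \to +\infty$. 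Meanwhile the right side, after multiplication by $p^2$, is governed by $p^2\int_{\partial\Omega\cap B_{2r}(x_i)}\langle x - x_i,\nu\rangle(\Delta u_p)^2 dS$; here I would need a boundary gradient estimate showing $p|\Delta u_p(x)|$ or $p^2(\Delta u_p)^2$ stays controlled (not blowing up too fast) near $\partial\Omega$ — this is where I would invoke the Green representation formula together with the estimates \eqref{2-4} on $\nabla^i G$ and the fact that $\int_\Omega |\nabla^i_x G(x,y)|\,|u_p(y)|^p dy$ can be bounded using $p\int_\Omega |u_p|^p \le C$ from \eqref{est-v-1}, at least for $x$ at controlled distance from $\s$; near $x_i \in \partial\Omega$ itself one uses that the bubble scale $\e_p^\pm \to 0$ faster than any distance to the boundary only if $x_i$ is interior, so if $x_i \in \partial\Omega$ the rescaling picture from Lemma~\ref{Lem25} degenerates — and that is precisely the contradiction: Lemma~\ref{Lem25} (and its analogue for each $x_{j,p} \to x_i \in \partial\Omega$) forces $d(x_{j,p}, \partial\Omega)/\e_{j,p} \to \infty$, which already rules out $x_j^{\pm}$ touching the boundary, but for a general $j$ with $x_{j,p}\to x_i\in\partial\Omega$ one must redo that lemma's argument.

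Actually the cleanest route, and the one I would ultimately pursue, is: (i) show by adapting the proof of Lemma~\ref{Lem25} that \emph{every} family $x_{j,p}$ with $\e_{j,p} \to 0$ satisfies $d(x_{j,p},\partial\Omega)/\e_{j,p} \to \infty$ — the same Green-representation estimate giving $|\nabla^i W_{j,p}| \le CR^{4-i}$ on balls, hence $|W_{j,p}| \le CR^4$, hence a boundary point would satisfy $p = |W_{j,p}(x_0)| \le C$, absurd; so no $x_{j,p}$ approaches $\partial\Omega$ at bubble scale; (ii) but a priori a blow-up point $x_i \in \s$ might arise as a limit of $x_{j,p}$ with $d(x_{j,p},\partial\Omega)/\e_{j,p}\to\infty$ yet $d(x_{j,p},\partial\Omega) \to 0$ — to exclude this I would run the Pohozaev identity on $\Omega \cap B_{2r}(x_i)$ as above and get a contradiction between the lower bound $p^2\cdot\frac{4}{p+1}\int_{\Omega\cap B_{2r}(x_i)}|u_p|^{p+1} \to +\infty$ and upper bounds on all boundary terms; the terms on $\partial\Omega$ are handled because there $u_p$ and its tangential derivatives vanish and the normal-derivative terms come with the favorable sign (for star-shaped-type pieces, $\langle x - x_i, \nu\rangle$ has a sign near $x_i$ after choosing $r$ small, since $\partial\Omega$ is $C^{4,\alpha}$ and locally a graph), while the terms on $\partial B_{2r}(x_i)\cap\Omega$ are $O(1)$ after multiplying by $p^2$ thanks to \eqref{31}. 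The main obstacle I anticipate is the careful bookkeeping of the boundary integrals on $\partial\Omega \cap B_{2r}(x_i)$: one must verify that all the terms not manifestly zero (because $u_p = 0$, $\nabla_{\tan} u_p = 0$ there) either vanish or have a definite sign, and in particular that $\int_{\partial\Omega\cap B_{2r}(x_i)}\langle x-x_i,\nu\rangle(\Delta u_p)^2 dS \ge -o(p^{-2})$ — controlling $p^2(\Delta u_p)^2$ on the boundary via the Green function bounds \eqref{2-4} and $p\int_\Omega|u_p|^p\le C$ is the delicate analytic point, and may require also using \eqref{31} together with elliptic boundary regularity to propagate control of $\Delta u_p$ up to $\partial\Omega$ away from $\s$, combined with step~(i) to handle the portion near a hypothetical boundary blow-up.
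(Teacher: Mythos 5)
Your overall strategy (contradiction, localized Pohozaev identity near a hypothetical boundary blow-up point, energy lower bound from the bubble, control of the outer-sphere terms by \eqref{31}) is the same as the paper's, but there are two genuine gaps. The decisive one is the treatment of the term $\tfrac12\int_{\partial\Omega\cap B_r(x_1)}\langle x-y,\nu\rangle(\Delta u_p)^2\,dS$, which is the only boundary term on $\partial\Omega$ that does not vanish from $u_p=\nabla u_p=0$. Your two proposed routes do not close it: for a general $C^{4,\alpha}$ domain the quantity $\langle x-x_1,\nu(x)\rangle$ on $\partial\Omega$ near $x_1$ is of order $|x-x_1|^2$ with sign governed by the second fundamental form, so there is no universally favorable sign (and at a locally convex boundary point the sign is the unfavorable one); and a pointwise bound on $p\,\Delta u_p$ up to $\partial\Omega$ \emph{near the blow-up point} is exactly what \eqref{31} and the Green representation do not give you, since $x_1\in\s$ — this is the "delicate analytic point" you flag but do not resolve. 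The paper sidesteps it entirely by a choice of the Pohozaev center: it sets $y_p:=x_1+\rho_{r,p}\nu(x_1)$ with $\rho_{r,p}$ the weighted average of $\langle x-x_1,\nu\rangle$ against $(\Delta u_p)^2$ over $\partial\Omega\cap B_r(x_1)$, so that $\int_{\partial\Omega\cap B_r(x_1)}\langle x-y_p,\nu\rangle(\Delta u_p)^2\,dS=0$ identically (see \eqref{3-31}), while $|\rho_{r,p}|\le 2r$ keeps all remaining terms on $\Omega\cap\partial B_r(x_1)$ of size $O(r^3)$ or $O(r^4)$ via Proposition \ref{prop3.2} and \eqref{31}. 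Without this (or an equivalent) device, your step (ii) does not go through.

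The second gap is in the bookkeeping of the contradiction. From \eqref{3.1}, $p\int_\Omega|u_p|^{p+1}\le C$, so $p^2\cdot\frac{4}{p+1}\int_{\Omega\cap B_r(x_1)}|u_p|^{p+1}$ is \emph{bounded}, not divergent: if $p\int_{\Omega\cap B_r}|u_p|^{p+1}\ge c>0$ then the left side times $p^2$ tends to a finite positive constant of order $4c$, not $+\infty$. Consequently, showing the right-hand side is merely "$O(1)$ after multiplying by $p^2$" yields no contradiction. The correct structure, as in the paper, is: the bubble at $x_1$ gives the uniform lower bound $p\int_{\Omega\cap B_r(x_1)}|u_p|^{p+1}\ge 64\pi^2+o_p(1)$ (cf. \eqref{765}), while the estimates of the boundary terms give $\lim_{r\to0}\lim_{p\to\infty}\frac{4p^2}{p+1}\int_{\Omega\cap B_r(x_1)}|u_p|^{p+1}=0$ because each surviving term is $O(r^3)$ or $O(r^4)$ uniformly in $p$; the contradiction is obtained only after the second limit $r\to0$, a step your outline omits. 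Your step (i) (every $x_{j,p}$ stays at distance $\gg\varepsilon_{j,p}$ from $\partial\Omega$) is fine but, as you yourself note, does not exclude boundary accumulation at a slower rate, so the whole burden falls on the Pohozaev step, where the above two points must be repaired.
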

\begin{proof}
The proof is similar to that of \cite[Lemma 4.3]{wei}, and we sketch it here for the reader's convenience.
Assume by contradiction that $\s\cap\pa\om\neq\emptyset$, say $x_1\in \s\cap\pa\om $ for example. Then Remark \ref{rmk3-9} indicates that $B_{2r_0}(x_1)\cap \s=\{x_1\}$. By Proposition \ref{prop31}, we take $1\leq j\leq k$ such that $x_1=\lim_{p\to\infty} x_{j,p}$. Then by Proposition \ref{prop31} and $\liminf_{p\to\infty}|u_p(x_{j,p})|\geq1$, we have that for any $r\in(0,r_0)$,
\begin{align}\label{765}
    p\int_{B_{r}(x_1)\cap \Omega}\left|u_p(x)\right|^{p+1} dx \geq &
    p\int_{B_{\frac{r}{2}}(x_{j,p})\cap \Omega}\left|u_p(x)\right|^{p+1}  dx\notag\\
    =& \left|u_p(x_{j,p})\right|^2\int_{B_{\frac{r}{2\varepsilon_{j,p}}}(0)\cap \frac{\Omega-x_{j,p}}{\varepsilon_{j,p}}}\left|1+\frac{W_{j,p}(z)}{p}\right|^{p+1}  dz\notag\\
    \geq& \left|u_p(x_{j,p})\right|^2(\int_{\R^4}e^W+o(1))\geq 64\pi^2+o(1),\;\text{as $p\to\infty$.}
\end{align}

Let $\nu$ be the outer normal vector of $(\partial\Omega \cap B_r(x_1)) \cup (\Omega\cap\partial B_r(x_1))$, and define $y_{p}:=x_1+\rho_{r,p}\nu (x_1)$, where
\begin{align}
    \rho_{r,p}:=\frac{\int_{\partial\Omega \cap B_r(x_1)} \langle x-x_1 , \nu \rangle \left| \Delta u_{p}(x) \right|^2 dS}{\int_{\partial\Omega \cap B_r(x_1)} \langle \nu(x_1) , \nu \rangle \left| \Delta u_{p}(x) \right|^2 dS}.
\end{align}
Hence,
\begin{align}\label{3-31}
    \int_{\partial\Omega \cap B_r(x_1)} \langle x-y_{p} , \nu \rangle \left| \Delta u_{p}(x) \right|^2 dS =0.
\end{align}
We can choose $r>0$ small enough such that $\frac{1}{2}\leq \langle \nu(x_1) , \nu \rangle \leq 1$ for $x\in \partial\om\cap B_r(x_1)$. Then $|\rho_{r,p}|\leq 2r$ . Using Lemma \ref{Pohozaev} (i.e. the Pohozaev identity) on $\Omega \cap B_r(x_1)$ with
$y=y_{p}$, $u_p=\nabla u_p=0$ on $\partial\Omega$ and \eqref{3-31}, we obtain
{\allowdisplaybreaks
\begin{align}
     &\frac{4{p}^2}{p+1}\int_{\Omega \cap B_r(x_1)}|u_{p}|^{{p}+1} dx\notag\\
   =&\frac{{p}^2}{p+1}\int_{\Omega \cap \partial B_r(x_1)}\langle x-y_{p}, \nu \rangle|u_{p}|^{{p}+1}dS
 -2\int_{\Omega \cap \partial B_r(x_1)}\frac{\partial(pu_p)}{\partial\nu}\dl ({p} u_{p}) dS\notag\\
  &+\frac{1}{2}\int_{\Omega \cap \partial B_r(x_1)}\langle x-y_{p}, \nu\rangle( \dl ({p}u_{p}))^2dS-\int_{\Omega \cap \partial B_r(x_1)}\langle x-y_{p}, \nabla({p}u_{p})\rangle\frac{\pa \dl (pu_{p})}{\partial \nu}dS\notag\\
  &-\int_{\Omega \cap \partial B_r(x_1)}\langle x-y_{p}, \nabla\dl({p}u_{p})\rangle\frac{\pa (pu_{p})}{\partial \nu} dS\notag\\
  &+\int_{\Omega \cap \partial B_r(x_1)}\langle x-y_{p},\nu \rangle\langle\nabla({p}u_{p}),\nabla\dl({p}u_{p})\rangle dS\notag\\  =&\RNum{1}_p+\RNum{2}_p+\RNum{3}_p+\RNum{4}_p+\RNum{5}_p+\RNum{6}_p.\notag
\end{align}
}%
Recall Proposition $\ref{prop3.2}$ that $pu_{p}(x)\rt \sum_{i=1}^{N} \gamma_i G(x,x_i)=\sum_{i=2}^{N} \gamma_i G(x,x_i)$ \text{in} $C_{loc}^4(\overline{\om}\backslash\s)$, where $G(x,x_1)=0$ is used since $x_1\in\partial\Omega$.
From here and $|x-y_{p}|\leq |x-x_1|+|\rho_{r,p}|\leq 3r$ for $x \in \Omega \cap \partial B_r(x_1)$, we easily obtain the following estimates
\[\RNum{2}_p=O(r^3)\quad\text{and }\; \RNum{3}_p,\; \RNum{4}_p,\; \RNum{5}_p, \; \RNum{6}_p=O(r^4).\]
On the other hand, it follows from (\ref{31}) that
\begin{align*}
    |\RNum{1}_p| \leq \frac{{p}^2}{p+1}\Big(\frac{C}{p}\Big)^{p+1} \int_{\Omega \cap \partial B_r(x_1)}
    |x-y_{p}| dS =
    o_{p}(1) r^4.
\end{align*}
Inserting these estimates into the above identity leads to
\begin{align}
    \lim \limits_{r\to 0}\lim \limits_{p \to \infty}\frac{4{p}^2}{p+1}\int_{\Omega \cap B_r(x_1)}|u_{p}|^{{p}+1} dx=0,
\end{align}
which contradicts with $(\ref{765})$.
\end{proof}

Thanks to Proposition \ref{prop 33}, in Remark \ref{rmk3-9} we can further assume $B_{2r_0}(x_i)\subset\Omega$ for all $i$ by taking $r_0$ smaller if necessary.

\begin{Lemma}\label{local} Fix any $x_i\in \s$ and recall $\gamma_i=\gamma_i^+-\gamma_i^-$ in Proposition \ref{prop3.2}. Then
\begin{align}\label{3-34}
  \lim\limits_{\rho \to 0}\lim\limits_{p\to \infty} p\int_{B_{\rho}(x_i)}|u_{p}|^{{p}+1} dx=\frac{\gamma_i^2}{64 \pi^2}=\frac{(\gamma_i^+-\gamma_i^-)^2}{64 \pi^2}.
\end{align}
Consequently,
\begin{align}\label{3-34-2}
\lim\limits_{p\to \infty} p\int_{\Omega}|u_{p}|^{{p}+1} dx=\frac{1}{64 \pi^2}\sum_{j=1}^N{\gamma_j^2}=\frac{1}{64 \pi^2}\sum_{j=1}^N(\gamma_j^+-\gamma_j^-)^2.
\end{align}
\end{Lemma}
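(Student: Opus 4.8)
The plan is to use the Pohozaev identity (Lemma \ref{Pohozaev}) localized on a small ball $B_\rho(x_i)$, exactly mimicking the strategy used in Proposition \ref{prop 33} but now around an \emph{interior} point of $\s$, where the boundary term $G(x,x_1)=0$ that killed the leading contribution there is no longer available. First I would fix $x_i\in\s$ and a radius $\rho\in(0,r_0)$ so that $B_{2\rho}(x_i)\Subset\Omega$ and $B_{2\rho}(x_i)\cap\s=\{x_i\}$, then apply Lemma \ref{Pohozaev} to $u_p$ on the domain $B_\rho(x_i)$ with center $y=x_i$. The left-hand side is $\frac{4p^2}{p+1}\int_{B_\rho(x_i)}|u_p|^{p+1}$, which, after taking $p\to\infty$ and then $\rho\to0$, is $4$ times the quantity in \eqref{3-34} we want to identify. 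All the terms on the right-hand side are boundary integrals over $\partial B_\rho(x_i)$, which lies in the good region $\overline\Omega\setminus\s$.

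Next I would evaluate those boundary terms using Proposition \ref{prop3.2}: on $\partial B_\rho(x_i)$ we have $pu_p(x)\to \sum_{j=1}^N\gamma_j G(x,x_j)$ in $C^4_{loc}$, so I substitute this Green-function expansion into each of the six boundary terms $\RNum{1}_p,\dots,\RNum{6}_p$. The term $\RNum{1}_p$ involving $|u_p|^{p+1}$ on the sphere is $o_p(1)\rho^4$ by \eqref{31} and vanishes in the limit, just as in Proposition \ref{prop 33}. For the remaining terms, writing $h_i(x):=\sum_{j\ne i}\gamma_j G(x,x_j)$ for the part regular at $x_i$ and using $G(x,x_i)=-\frac{1}{8\pi^2}\log|x-x_i|+H(x,x_i)$, I expand $\sum_j\gamma_j G(x,x_j)=-\frac{\gamma_i}{8\pi^2}\log|x-x_i|+(\text{bounded, }C^4\text{ near }x_i)$. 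Plugging the radial singular profile $\varphi(x):=-\frac{\gamma_i}{8\pi^2}\log|x-x_i|$ into the Pohozaev boundary integrals over $\partial B_\rho(x_i)$ and computing with $x-y=x-x_i$ (so $\langle x-x_i,\nu\rangle=\rho$ on the sphere, $\nabla\varphi$ radial, $\Delta\varphi=-\frac{\gamma_i}{8\pi^2}\cdot\frac{2}{|x-x_i|^2}$, $\nabla\Delta\varphi$ radial), the $\rho$-dependence cancels and one is left with a pure constant times $\gamma_i^2$; the cross terms between $\varphi$ and the regular part $h_i$, and the purely regular contributions, are $O(\rho)$ or $O(\rho^2)$ and drop out as $\rho\to0$. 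Matching constants — this is where the classification constant $\int_{\mathbb R^4}e^W=64\pi^2$ and the fundamental-solution normalization $-\frac{1}{8\pi^2}\log|x|$ conspire — yields $4\cdot\lim_\rho\lim_p p\int_{B_\rho(x_i)}|u_p|^{p+1}=\frac{\gamma_i^2}{16\pi^2}$, i.e. \eqref{3-34}.

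Finally, \eqref{3-34-2} follows by summing \eqref{3-34} over $i=1,\dots,N$ together with the global bound: by \eqref{31}, $\|u_p\|_{L^\infty(\Omega\setminus\bigcup_i B_\rho(x_i))}\le C(\rho)/p$, so $p\int_{\Omega\setminus\bigcup_i B_\rho(x_i)}|u_p|^{p+1}\le p\cdot|\Omega|\cdot(C(\rho)/p)^{p+1}\to0$ as $p\to\infty$ for fixed $\rho$; hence $\lim_p p\int_\Omega|u_p|^{p+1}=\sum_i\lim_\rho\lim_p p\int_{B_\rho(x_i)}|u_p|^{p+1}$, and one invokes \eqref{3-34} term by term (using that the balls are disjoint).

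The main obstacle I anticipate is the bookkeeping in the constant-matching step: one must carefully track the six Pohozaev boundary terms for the logarithmic profile $\varphi(x)=-\frac{\gamma_i}{8\pi^2}\log|x-x_i|$ on the sphere $\partial B_\rho(x_i)$ and verify that the $\log\rho$ and $\rho^{-k}$ factors all cancel to leave a dimensionless constant, getting the numerical coefficient $\frac{1}{64\pi^2}$ exactly right rather than up to a factor. A secondary technical point is justifying that the $C^4_{loc}$ convergence of $pu_p$ on $\partial B_\rho(x_i)$ suffices to pass to the limit in all derivative terms appearing in the Pohozaev identity (including $\nabla\Delta(pu_p)$), which is fine since $\partial B_\rho(x_i)\Subset\overline\Omega\setminus\s$ and Proposition \ref{prop3.2} gives convergence up to fourth order.
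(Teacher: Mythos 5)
Your proposal is correct and follows essentially the same route as the paper: apply the Pohozaev identity on $B_\rho(x_i)$ with center $y=x_i$, substitute the $C^4_{loc}$ limit $pu_p\to\sum_j\gamma_j G(\cdot,x_j)$ (with its $-\frac{\gamma_i}{8\pi^2}\log|x-x_i|$ singularity) into the boundary terms, observe that the $|u_p|^{p+1}$ surface term is negligible by \eqref{31} and the regular/cross contributions are $O(\rho)$, and finally sum over $\s$ using the off-ball decay from \eqref{31}. The paper just makes the constant-matching step explicit by writing the expansions \eqref{U1}--\eqref{U4} for $pu_p,\,p\nabla u_p,\,p\Delta u_p,\,p\nabla\Delta u_p$ and summing the resulting four contributions $-\frac{\gamma_i^2}{8\pi^2}+\frac{\gamma_i^2}{16\pi^2}+\frac{\gamma_i^2}{4\pi^2}-\frac{\gamma_i^2}{8\pi^2}=\frac{\gamma_i^2}{16\pi^2}$, which is precisely the computation you flagged as the main bookkeeping obstacle.
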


\begin{proof}
By Proposition \ref{prop3.2}, we have
\begin{align}
    pu_{p}(x) \rt\sum_{j=1}^N \gamma_jG(x,x_j) \quad\text{in }C_{loc}^4 (\overline{B_{r_0}(x_i)} \backslash \{ x_i \}).
\end{align}
Then for any fixed $\rho\in (0,r_0)$, it follows that for $x\in \overline{ B_{r_0}(x_i)}\setminus B_{\frac{\rho}{2}}(x_i)$,
{\allowdisplaybreaks
\begin{align}
   & p u_{p}(x)= -\frac{\gamma_i}{8\pi^2}\log{\abs{x-x_i}}+O(1),\label{U1}\\
   & p \nabla u_{p }(x) = -\frac{\gamma_i}{8\pi^2}\frac{x-x_i}{\abs{x-x_i}^2}+O(1),\nonumber\\
   & p \Delta u_{p}(x)= - \frac{\gamma_i}{4\pi^2}\frac{1}{\abs{x-x_i}^2}+O(1),\nonumber\\
   & p \nabla \Delta u_{p}(x)= \frac{\gamma_i}{2\pi^2}\frac{x-x_i}{\abs{x-x_i}^4}+O(1).\label{U4}
\end{align}
}%
On the other hand,
by using Lemma $\ref{Pohozaev}$ (i.e. the Pohozaev identity) on $B_{\rho}(x_i)$ with
$y=x_i$ and noting $ x-x_i=\rho\nu$ for $x\in\partial B_{\rho}(x_i)$, we get
\begin{align}\label{3-39}
     &\frac{4{p}^2}{p+1}\int_{B_{\rho}(x_i)}|u_{p}|^{{p}+1} dx\notag\\
   =&\rho\frac{{p}^2}{p+1}\int_{\partial B_{\rho}(x_i)}|u_{p}|^{{p}+1}dS
  -2\int_{\partial B_{\rho}(x_i)}\langle \nabla({p}u_{p}) ,\nu \rangle\dl ({p} u_{p}) dS\notag\\
  +&\frac{\rho}{2}\int_{\partial B_{\rho}(x_i)}( \dl ({p}u_{p}))^2dS- 2\rho\int_{\partial B_{\rho}(x_i)}\langle \nabla({p}u_{p}), \nu\rangle
  \langle \nabla \Delta({p}u_{p}) ,\nu \rangle dS\notag\\
  +&\rho\int_{\partial B_{\rho}(x_i)}\langle\nabla({p}u_{p}),\nabla\dl({p}u_{p})\rangle dS.
\end{align}
From (\ref{U1})-(\ref{U4}), we can obtain that
\[
   \rho\frac{{p}^2}{p+1}\int_{\partial B_{\rho}(x_i)}|u_{p}|^{{p}+1}dS
    \leq C\frac{p^2}{p+1}\Big(\frac{C}{p}\Big)^{p+1}=o_p(1),
\]
\[
    -2\int_{\partial B_{\rho}(x_i)}\langle \nabla({p}u_{p}) ,\nu \rangle\dl ({p} u_{p}) dS=-\frac{\gamma_i^2}{8\pi^2}+ O(\rho),
\]
\[
    \frac{\rho}{2}\int_{\partial B_{\rho}(x_i)}( \dl ({p}u_{p}))^2dS=\frac{\gamma_i^2}{16\pi^2}+O(\rho),
\]
\[
    -2\rho\int_{\partial B_{\rho}(x_i)}\langle\nabla({p}u_{p}),\nu\rangle
    \langle \nabla \Delta({p}u_{p}) ,\nu \rangle dS=\frac{\gamma_i^2}{4\pi^2}+O(\rho),
\]
\[
    \rho\int_{\partial B_{\rho}(x_i)}\langle\nabla({p}u_{p}),\nabla\dl({p}u_{p})\rangle dS=-\frac{\gamma_i^2}{8\pi^2}+O(\rho).
\]
Inserting these estimates into \eqref{3-39}, we obtain \eqref{3-34}.

Consequently, since \eqref{31} implies
\begin{align}\label{enen}p\int_{\Omega}|u_p|^{p+1}&=\sum_{j=1}^Np\int_{B_\rho(x_j)}|u_p|^{p+1}
+\int_{\Omega\setminus \cup_jB_\rho(x_j)}|u_p|^{p+1}\\
&=\sum_{j=1}^Np\int_{B_\rho(x_j)}|u_p|^{p+1}+o_p(1),\nonumber\end{align}
we obtain \eqref{3-34-2}.
\end{proof}

The following lemma is important when we consider least energy nodal solutions in the next section. Roughly speaking, this lemma asserts that the sequences $(x_{j,p})_{p>1}$ in Proposition \ref{prop31} can be replaced with sequences consisting of local maximum points of $|u_p(x)|$. Recall $\Omega_p^{\pm}$ defined in \eqref{omp1}.

\begin{Lemma}\label{Lem31}
Fix any $x_i \in \s$. Given any $r\in(0,r_0)$, we take ${y_{i,p}}\in\overline{B_{r}(x_i)}$ such that
\begin{align}\notag
    |u_{p}(y_{i,{p}})|
:=\max\limits_{\overline{B_{r}(x_i)}}|u_{p}(x)|.
\end{align}
Then up to a subsequence, if $y_{i,p}\in \Omega_p^+$ (the case $y_{i,p}\in \Omega_p^-$ can be treat similarly and is omitted here), we have \begin{itemize}
\item[(1)] $\e_{i,p}^{*}:=[p|u_{p}(y_{i,p})|^{p-1}]^{-\frac{1}{4}}\rt0$ and ${y_{i,p}}\rt x_{i}.$
\item[(2)] Define \begin{equation}\label{www}\widetilde{W}_{i,p}^{*}:=\frac{p}{u_{p}(y_{i,p})}(u_{p}(y_{i,p}+\e_{i,p}^{*}x)-u_{p}(y_{i,p})),\quad x\in\frac{B_{r}(x_i)\cap \Omega_p^+-y_{i,p}}{\e_{i,p}^{*}},\end{equation}
then
$\widetilde{W}_{i,p}^{*}\rt W=-4\log(1+\frac{|x|^2}{8\sqrt{6}})$ in  $C_{loc}^4(\R^4).$
\item[(3)]
\begin{align}\label{3-35}
\liminf\limits_{p \to \iy}p\int_{B_{r}(x_i)\cap \Omega_p^+}|u_{p}(x)|^{p+1}dx\geq 64\pi^2\liminf\limits_{p \to \iy}|u_{p}(y_{i,p})|^2\geq 64\pi^2,
\end{align}
\begin{align}\label{3-36}
    \liminf\limits_{p \to \iy}p\int_{B_{r}(x_i)\cap \Omega_p^+}|u_{p}(x)|^{p}dx\geq 64\pi^2\liminf\limits_{p \to \iy}|u_{p}(y_{i,p})|\geq 64\pi^2.
\end{align}
\end{itemize}
\end{Lemma}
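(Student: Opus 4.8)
The plan is to adapt the blow-up analysis of Proposition~\ref{prop22} and Lemma~\ref{Lem25}, now carried out around the genuine maximum points $y_{i,p}$ of $|u_p|$ on $\overline{B_r(x_i)}$ rather than the points $x_{j,p}$ supplied by Proposition~\ref{prop31}. For part (1), by Remark~\ref{rmk3-9} there is $j\in\{1,\dots,k\}$ with $\lim_{p\to\iy}x_{j,p}=x_i$, so $x_{j,p}\in B_r(x_i)$ for $p$ large and $p|u_p(x_{j,p})|^{p-1}=\e_{j,p}^{-4}\to\iy$. The maximality of $y_{i,p}$ gives $p|u_p(y_{i,p})|^{p-1}\ge p|u_p(x_{j,p})|^{p-1}\to\iy$, i.e.\ $\e_{i,p}^{*}\to0$; and if $|u_p(y_{i,p})|\le 1-\delta$ along a subsequence we would get $p|u_p(y_{i,p})|^{p-1}\le p(1-\delta)^{p-1}\to0$, a contradiction, so $\liminf_{p\to\iy}|u_p(y_{i,p})|\ge1$. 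Hence $p|u_p(y_{i,p})|\to\iy$, so every subsequential limit of $y_{i,p}$ belongs to $\s$; since $\overline{B_r(x_i)}\subset B_{2r_0}(x_i)$ meets $\s$ only in $x_i$ (Remark~\ref{rmk3-9}), we conclude $y_{i,p}\to x_i$.

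For part (2) I put $\om_{i,p}^{*}:=\frac{B_r(x_i)\cap\om_p^+-y_{i,p}}{\e_{i,p}^{*}}$, the domain of $\widetilde W_{i,p}^{*}$ in \eqref{www}; there $\dl^2\widetilde W_{i,p}^{*}=(1+\widetilde W_{i,p}^{*}/p)^{p}$ with $\widetilde W_{i,p}^{*}(0)=0$, while $-2p\le\widetilde W_{i,p}^{*}\le0$ wherever $y_{i,p}+\e_{i,p}^{*}x\in\overline{B_r(x_i)}$ (by maximality of $|u_p(y_{i,p})|$) and $\widetilde W_{i,p}^{*}=-p$ on the rescaled zero set. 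The decisive step is to show $\om_{i,p}^{*}$ exhausts $\R^4$. For this I re-derive, exactly as in \eqref{as}, the bound $|\nabla^m\widetilde W_{i,p}^{*}(x)|\le C(R)$ for $m=1,2,3$ and $|x|\le R$: apply the Green representation formula to $\nabla^m u_p(y_{i,p}+\e_{i,p}^{*}x)$, use \eqref{2-4}, split the integral at $B_{2\e_{i,p}^{*}R}(y_{i,p})$ (which sits inside $B_r(x_i)$ for $p$ large), bound the inner part by $|u_p|\le|u_p(y_{i,p})|$ there together with $(\e_{i,p}^{*})^{-4}=p|u_p(y_{i,p})|^{p-1}$, and the outer part by \eqref{est-v-1} and $\liminf|u_p(y_{i,p})|\ge1$; crucially this uses only $\dl^2u_p=|u_p|^{p-1}u_p$ on all of $\om$, hence is insensitive to the sign of $u_p$. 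Integrating from $0$ then gives $|\widetilde W_{i,p}^{*}(x)|\le C(R)$ on $B_R(0)$; if a fixed $z_0$ were outside $\om_{i,p}^{*}$ for infinitely many $p$, then (since $y_{i,p}+\e_{i,p}^{*}z_0\to x_i\in B_r(x_i)$) $u_p(y_{i,p}+\e_{i,p}^{*}z_0)\le0$, whence $\widetilde W_{i,p}^{*}(z_0)\le-p$, contradicting the bound. With exhaustion in hand, standard elliptic estimates give a subsequence $\widetilde W_{i,p}^{*}\to W^{*}$ in $C^4_{loc}(\R^4)$ with $\dl^2W^{*}=e^{W^{*}}$, $W^{*}\le0=W^{*}(0)$, and $\int_{\R^4}e^{W^{*}}<\iy$ by Fatou and \eqref{est-v-1}. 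Reproducing the estimate $\int_{B_R(0)}|\dl\widetilde W_{i,p}^{*}|\le CR^2$ from Proposition~\ref{prop22} yields $\int_{B_R(0)}|\dl W^{*}|\le CR^2$, so \eqref{22} forces all $a_j=0$, \eqref{21} gives $W^{*}(x)=o(|x|^2)$, and Lemma~\ref{3} then forces $\alpha=2$ and the form \eqref{23}; since $0$ is a maximum point of $W^{*}$, the center in \eqref{23} is $0$, and $W^{*}(0)=0$ pins down $\lambda$, giving $W^{*}=W=-4\log(1+|x|^2/(8\sqrt6))$ with $\int_{\R^4}e^{W}=64\pi^2$.

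For part (3), fix $R$; for $p$ large, restricting the integral to $B_{\e_{i,p}^{*}R}(y_{i,p})\subset B_r(x_i)$ and changing variables $y=y_{i,p}+\e_{i,p}^{*}x$, using $p(\e_{i,p}^{*})^4=|u_p(y_{i,p})|^{1-p}$, gives
\[
p\int_{B_r(x_i)\cap\om_p^+}|u_p|^{p+1}\ge|u_p(y_{i,p})|^2\int_{\om_{i,p}^{*}\cap B_R(0)}\Big(1+\frac{\widetilde W_{i,p}^{*}}{p}\Big)^{p+1}dx,
\]
and likewise with the exponents $p+1$ replaced by $p$ and $|u_p(y_{i,p})|^2$ replaced by $|u_p(y_{i,p})|$. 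Letting $p\to\iy$ (using $\om_{i,p}^{*}\supset B_R(0)$ for $p$ large and the $C^4_{loc}$ convergence $\widetilde W_{i,p}^{*}\to W$) and then $R\to\iy$ (using $\int_{\R^4}e^{W}=64\pi^2$ and $\liminf|u_p(y_{i,p})|\ge1$) yields \eqref{3-35}--\eqref{3-36}. The case $y_{i,p}\in\om_p^-$ follows by replacing $u_p$ with $-u_p$.

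The main obstacle is the exhaustion step in part (2): one cannot apply $(\p_2^k)$ or Corollary~\ref{cor31} at $y_{i,p}$, since $y_{i,p}$ need not be one of the $x_{j,p}$ — that would be circular — so the uniform control of $\widetilde W_{i,p}^{*}$, and through it the fact that the zero set of $u_p$ recedes to infinity in rescaled coordinates, must come directly from the sign-insensitive Green representation estimate. Once that is established, parts (1) and (3) and the classification of the limit are a routine reprise of Lemma~\ref{Lem25}, Proposition~\ref{prop22} and Lemma~\ref{3}.
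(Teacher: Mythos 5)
Your proposal is correct, and while its overall skeleton (rescale at $y_{i,p}$, uniform bounds, classification via Lemma \ref{3}, change of variables for (3)) matches the paper, it reaches the two delicate points by a genuinely different mechanism. The paper leans on the structure already built in Proposition \ref{prop31} and Corollary \ref{cor31}: property $(\p_3^k)$ gives $|x_{j,p}-y_{i,p}|/\e_{i,p}^*\le C$ for some concentration point $x_{j,p}\to x_i$ (whence $y_{i,p}\to x_i$), property $(\p_2^k)$ then gives $\e_{j,p}/\e_{i,p}^*\to 1$, which forces $x_{j,p}\in\Omega_p^+$ and transfers $d(x_{j,p},NS_p)/\e_{j,p}\to\iy$ from Corollary \ref{cor31} into $d(y_{i,p},NS_p)/\e_{i,p}^*\to\iy$; only after this exhaustion of $\frac{B_r(x_i)\cap\Omega_p^+-y_{i,p}}{\e_{i,p}^*}$ does the paper repeat the estimates of Lemma \ref{Lem25} and Proposition \ref{prop22} to identify the limit. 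You bypass $(\p_3^k)$ and Corollary \ref{cor31} entirely: for (1) you observe $p|u_p(y_{i,p})|\to\iy$, so every limit of $y_{i,p}$ lies in $\s\cap\overline{B_r(x_i)}=\{x_i\}$ by the separation of Remark \ref{rmk3-9}; for (2) you re-derive the sign-insensitive Green-representation bounds of \eqref{as} around $y_{i,p}$ (legitimate: the inner-ball bound $|u_p|\le|u_p(y_{i,p})|$ only needs $B_{2\e_{i,p}^*R}(y_{i,p})\subset B_r(x_i)$, which part (1) supplies, and the outer part needs only \eqref{est-v-1} and $\liminf|u_p(y_{i,p})|\ge 1$), obtaining $|\widetilde{W}_{i,p}^{*}|\le C(R)$ on all of $B_R(0)$ for the formula \eqref{www}, and then note that $u_p\le 0$ at $y_{i,p}+\e_{i,p}^*x$ would force $\widetilde{W}_{i,p}^{*}(x)\le -p$ — the same device the paper applies on $\partial\Omega(\e_p^+)$ in Lemma \ref{Lem25}, here turned on the nodal set to get the exhaustion directly. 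The trade-off: the paper's route is shorter (the hard estimates are quoted once) and yields the extra facts that the nearby bubble point has the same sign and that $d(y_{i,p},NS_p)/\e_{i,p}^*\to\iy$; your route is more self-contained, not relying on $(\p_3^k)$ or Corollary \ref{cor31}, at the cost of repeating estimates that the paper in any case re-runs for the convergence step.
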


\begin{proof}
By ${y_{i,p}}\in\overline{B_{r}(x_i)}$ and the definition of $R_{k,p}(y_{i,p})$ in \eqref{ffc}, there exists $1\leq j\leq k$ such that
\begin{align}\label{new 3.22}
    R_{k,p}(y_{i,p})= |x_{j,p}-y_{i,p}|^4 \quad\text{  and  }\quad \lim\limits_{p\to \infty} x_{j,p}=x_i.
\end{align}

(1). By the definition of $y_{i,p}$, we have $|u_p(y_{i,p})| \geq |u_p(x_{j,p})|>0$, so
\begin{align}\label{3-37}
    0<\e_{i,p}^{*}:=[p|u_{p}(y_{i,p})|]^{-\frac{1}{4}}\leq [p|u_{p}(x_{j,p})|]^{-\frac{1}{4}}=\e_{j,p}\rt0.
\end{align}
Furthermore, it follows from $(\p_3^{k})$ that
\begin{align}\label{new3.24}
    \frac{|x_{j,p}-y_{i,p}|}{\e_{i,p}^{*}}=[pR_{k,p}(y_{i,p})|u_p(y_{i,p})|^{p-1}]^{\frac{1}{4}}\leq C,
\end{align}
so $\lim_{p\to\infty}{y_{i,p}}=\lim_{p\to\infty}{x_{j,p}}=x_{i}$.

(2). Recall \eqref{3-37} that $\e_{i,p}^{*}/\e_{j,p}\leq 1$.
Then by (\ref{new3.24}) we get
\begin{align}
    \frac{|x_{j,p}-y_{i,p}|}{\e_{j,p}}=\frac{|x_{j,p}-y_{i,p}|}{\e_{i,p}^{*}}
    \frac{\e_{i,p}^*}{\e_{j,p}} \leq C.\notag
\end{align}
So it follows from $(\p_2^{k})$ that there is $x_{\infty} \in \mathbb{R}^4$ with $|x_{\infty}|\leq C$ such that up to a subsequence,
\begin{align*}
    W_{j,p}\left( \frac{y_{i,p}-x_{j,p}}{\e_{j,p}}\right) \to W(x_{\infty})\leq 0.
\end{align*}
As a consequence,
\begin{align}
    1\leq\left(\frac{\e_{j,p}}{\e_{i,p}^*}\right)^4=\left|\frac{u_{p}(y_{i,p})}{u_{p}(x_{j,p})}\right|^{p-1}=\left|1+\frac{W_{j,p}\left( \frac{y_{i,p}-x_{j,p}}{\e_{j,p}}\right)}{p}\right|^{p-1} \to e^{W(x_{\infty})}\leq 1,
\end{align}
which implies
\begin{align}\label{new q q}
    \frac{\e_{j,p}}{\e_{i,p}^*} \to 1 .
\end{align}

Recall $y_{i,p}\in\Omega_p^+$. If $u_p(x_{j,p})<0$ up to a subsequence, then by  $\e_{i,p}^{*}/\e_{j,p}\leq 1$ and Corollary \ref{cor31},
\begin{align*}
    \frac{|x_{j,p}-y_{i,p}|}{\e_{i,p}^*}\geq\frac{d(x_{j,p},NS_{p})}{\e_{j,p}} \to \infty,
\end{align*}
which contradicts with $(\ref{new3.24})$. Thus $u_p(x_{j,p})>0$, i.e. $(x_{j,p})_{p>1}\in \Omega_p^+$ for $p$ large.
Consequently,
\begin{align*}
    \frac{d(y_{i,p},NS_{p})}{\e_{i,p}^*}\geq \frac{d(x_{j,p},NS_{p})}{\e_{j,p}}-\frac{|x_{j,p}-y_{i,p}|}{\e_{i,p}^*}\geq \frac{d(x_{j,p},NS_{p})}{\e_{j,p}} -C \rt\iy,
\end{align*}
and $y_{i,p}\to x_i$ yields
\begin{align*}
   \frac{d(y_{i,p},\partial B_{r}(x_i))}{\e_{i,p}^*} \geq\frac{r}{2\e_{i,p}^{*}}   \rt\iy.
\end{align*}
From here we get
\begin{align}\label{3-45}
    \frac{B_{r}(x_i)\cap \Omega_p^+-y_{i,p}}{\e_{i,p}^{*}}\rt\R^4\quad \text{as } p \to \infty.
\end{align}

Recalling the $\widetilde{W}_{i,p}^{*}(x)$ defined in \eqref{www}, we have
\be
\begin{cases}\notag
  \dl^2\widetilde{W}_{i,p}^{*}(x)=\left | 1+\frac{\widetilde{W}_{i,p}^{*}(x)}{p}\right | ^{p-1}\left(1+\frac{\widetilde{W}_{i,p}^{*}(x)}{p}\right),\quad x\in \frac{B_{r}(x_i)\cap \Omega_p^+-y_{i,p}}{\e_{i,p}^{*}},\\
  \widetilde{W}_{i,p}^{*}(0)=0.
\end{cases}
\ee
Since $u(y_{i,p})=\max\limits_{\overline{B_{r}(x_i)}}|u_{p}(x)|$, we have for $x\in \frac{B_{r}(x_i)\cap \Omega_p^+-y_{i,p}}{\e_{i,p}^{*}}$,
\[\widetilde{W}_{i,p}^{*}(x)\leq \widetilde{W}_{i,p}^{*}(0)=0,\quad \left|1+\frac{\widetilde{W}_{i,p}^{*}(x)}{p}\right|
=\left|\frac{u(y_{i,p}+\e_{i,p}^*x)}{u(y_{i,p})}\right|\leq1.\]
Fix any $R>1$, it follows from \eqref{3-45} that $B_R(0)\subset \frac{B_{r}(x_i)\cap \Omega_p^+-y_{i,p}}{\e_{i,p}^{*}}$ for $p$ large. Then we can repeat the proof of Lemma \ref{Lem25} to obtain that for large $p$,
\begin{align}
   |\nabla^m \widetilde{W}_{i,p}^{*}(x)|=O(R),\quad x\in B_R(0),\quad m=0,1,2,3.
\end{align}
Consequently, we can follow the proof of Proposition \ref{prop22} to conclude that up to a subsequence, $\widetilde{W}_{i,p}^{*}(x)\to W=-4\log(1+{|x|^2}/{8\sqrt{6}})$ in $C_{loc}^4(\mathbb{R}^4)$.

(3) The \eqref{3-35}-\eqref{3-36} can be proved by a similar argument as \eqref{765}.
\end{proof}

\begin{Lemma}\label{l5.1}
Fix any $x_i \in \s$. Given any $r\in(0,r_0)$, we have
\begin{align}\label{5.1}
    \liminf\limits_{p\to \infty} p\int_{B_r(x_i)}|u_p|^{p+1}\geq 64\pi^2 e.
\end{align}
Consequently,
\begin{align}\label{5.1-1}
    \liminf\limits_{p\to \infty} p\int_{\Omega}|u_p|^{p+1}\geq 64N\pi^2 e.
\end{align}

Moreover, if there exists $\varepsilon_1>0$ such that
\begin{align}\label{new 5.2}
    \liminf\limits_{p\to \infty} p\int_{B_r(x_i)\cap \Omega_p^+}|u_p|^{p+1}\geq \varepsilon_1 \;\text{ and }\; \liminf\limits_{p\to \infty} p\int_{B_r(x_i)\cap \Omega_p^-}|u_p|^{p+1}\geq \varepsilon_1,
\end{align}
then
\begin{align}\label{3-41}
    \liminf\limits_{p\to \infty} p\int_{B_r(x_i)}|u_p|^{p+1}\geq 128\pi^2 e.
\end{align}
\end{Lemma}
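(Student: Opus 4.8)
The plan is to combine the local energy lower bound coming from the bubble at $x_i$ (Lemma \ref{Lem31}) with a ``local Pohozaev-type'' refinement that upgrades the constant $64\pi^2$ to $64\pi^2e$, in the spirit of the argument that produced \eqref{3-34} but now tracking the exponential factor. First I would pick a local maximum point $y_{i,p}\in\overline{B_r(x_i)}$ of $|u_p|$ on $\overline{B_r(x_i)}$; by Lemma \ref{Lem31}(1)–(2) we know $y_{i,p}\to x_i$, $\varepsilon_{i,p}^*=[p|u_p(y_{i,p})|^{p-1}]^{-1/4}\to0$, and the rescaling $\widetilde W_{i,p}^*$ converges to $W=-4\log(1+|x|^2/(8\sqrt6))$ in $C^4_{loc}(\R^4)$, with $\int_{\R^4}e^W=64\pi^2$. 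The key quantity is $M_{i,p}:=|u_p(y_{i,p})|$; by Proposition \ref{24} we have $\liminf_p M_{i,p}\ge1$. The point of the refinement is to show that actually $\liminf_p M_{i,p}^2\cdot p\int_{B_r(x_i)}|u_p|^{p+1}\ge 64\pi^2 e\cdot \liminf_p M_{i,p}^{?}$ — more precisely that the local energy picks up a factor $M_{i,p}^{p+1}$ times $|\text{scaling}|^4$, and using $|u_p|^{p+1}=M_{i,p}^{p+1}|1+\widetilde W^*_{i,p}/p|^{p+1}$ on the bubble together with $M_{i,p}^{p-1}(\varepsilon^*_{i,p})^4=1/p$ gives
\[
p\int_{B_r(x_i)\cap\Omega_p^+}|u_p|^{p+1}\,dx
= M_{i,p}^2\int_{\frac{B_r(x_i)\cap\Omega_p^+-y_{i,p}}{\varepsilon^*_{i,p}}}\Bigl|1+\frac{\widetilde W^*_{i,p}(z)}{p}\Bigr|^{p+1}dz
\ge M_{i,p}^2\bigl(\textstyle\int_{\R^4}e^W+o_p(1)\bigr).
\]
This alone gives only $64\pi^2$ (since we only control $\liminf M_{i,p}\ge1$); the extra factor $e$ must come from showing $\liminf_p M_{i,p}\ge\sqrt e$, i.e. that the maximum of $|u_p|$ near a blow-up point tends to at least $\sqrt e$.

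The mechanism for $\liminf M_{i,p}\ge\sqrt e$ is a local Pohozaev identity on $B_r(x_i)$ combined with the already-established asymptotics \eqref{U1}–\eqref{U4} of $pu_p$ away from $x_i$. Applying Lemma \ref{Pohozaev} on $B_\rho(x_i)$ with $y=x_i$ and letting $p\to\infty$ then $\rho\to0$, the boundary terms were computed in the proof of Lemma \ref{local} and yield $\lim_{\rho\to0}\lim_{p\to\infty}p\int_{B_\rho(x_i)}|u_p|^{p+1}=\gamma_i^2/(64\pi^2)$. On the other hand, splitting $p\int_{B_\rho(x_i)}|u_p|^{p+1}$ into the positive and negative parts and using Lemma \ref{Lem31}(3) applied at the scale of \emph{each} bubble sitting over $x_i$ (there may be several of the $x_{j,p}$ accumulating at $x_i$, each contributing a translate of $W$), one gets that each such bubble contributes at least $64\pi^2 M^2$ where $M$ is the corresponding limiting height. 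The identity $\gamma_i^2/(64\pi^2)\ge \sum_{\text{bubbles over }x_i}64\pi^2 M_\ell^2$, together with the representation $\gamma_i=\gamma_i^+-\gamma_i^-$ and $\gamma_i^{\pm}=\sum 32\pi^2 M_\ell (\text{height factors})$ coming from $\int_{\R^4}e^W=64\pi^2$ and the relation between $\gamma_i^\pm$ and $p\int|u_p|^p$ on each bubble, forces the limiting heights to equal $\sqrt e$. Concretely: on a single bubble, $p\int_{B}|u_p|^p\to 32\pi^2 M\cdot\!\!$ (something), $p\int_B|u_p|^{p+1}\to 32\pi^2 M^2\cdot\!\!$(something), and the Pohozaev balance $\gamma_i^2=64\pi^2\cdot(\text{total }(p+1)\text{-energy at }x_i)$ reduces, after matching, to $M^2=eM^2/e$... more carefully, one uses that for the single-bubble case $\gamma_i=\pm 64\pi^2\sqrt e$ is equivalent to $M=\sqrt e$, which is exactly the Santra–Wei computation; I would reproduce that computation locally. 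This gives \eqref{5.1}, and summing over the $N$ points of $\s$ using \eqref{enen} gives \eqref{5.1-1}.

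For the last assertion \eqref{3-41}, the hypothesis \eqref{new 5.2} guarantees that \emph{both} $\Omega_p^+$ and $\Omega_p^-$ carry a nontrivial bubble concentrating at $x_i$: indeed the lower bound $\varepsilon_1$ on the positive-part energy in $B_r(x_i)$ forces, by the same rescaling-and-maximum-point argument as in Lemma \ref{Lem31}, a sequence $y^+_{i,p}\in B_r(x_i)\cap\Omega_p^+$ with $|u_p(y^+_{i,p})|\not\to0$ hence $\ge1-o(1)$ along a subsequence, and by the argument of the previous paragraph $\ge\sqrt e$; symmetrically for $\Omega_p^-$ one gets $y^-_{i,p}$. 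The two bubbles are based at points whose mutual distance, divided by either scaling parameter, tends to $\infty$ — this is because $y^+_{i,p}$ and $y^-_{i,p}$ lie in different nodal domains, so any segment between them meets $NS_p$, while Corollary \ref{cor31} (transferred to $y^{\pm}_{i,p}$ as in the proof of Lemma \ref{Lem31}(2)) says $d(y^{\pm}_{i,p},NS_p)/\varepsilon^{*,\pm}_{i,p}\to\infty$; hence the two rescaled half-domains $\frac{B_r(x_i)\cap\Omega_p^\pm-y^\pm_{i,p}}{\varepsilon^{*,\pm}_{i,p}}$ each fill out $\R^4$ and the energies add without overlap:
\[
p\int_{B_r(x_i)}|u_p|^{p+1}
\ge p\int_{B_r(x_i)\cap\Omega_p^+}|u_p|^{p+1}+p\int_{B_r(x_i)\cap\Omega_p^-}|u_p|^{p+1}
\ge 64\pi^2 e + 64\pi^2 e + o_p(1),
\]
each term handled by the single-bubble argument giving \eqref{3-41}.

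The main obstacle I anticipate is the passage from $\liminf M_{i,p}\ge1$ to $\liminf M_{i,p}\ge\sqrt e$: this is not local in any naive sense, since the factor $e$ in Santra–Wei's analysis arises from integrating the ODE-type identity for $\Delta W$ against the \emph{global} profile and comparing $\int e^W$ with $\int e^W\cdot(\text{tail behavior})$. One has to be careful that the local Pohozaev identity on $B_\rho(x_i)$ genuinely closes — i.e. that the only contribution to $\gamma_i^2/(64\pi^2)$ is from the bubbles, with no ``neck'' energy — which is where \eqref{31} (no concentration outside $\s$) and the $C^4_{loc}$ convergence away from $\s$ are essential. A secondary technical point is bookkeeping when several $x_{j,p}$ collapse to the same $x_i$: one must rule out that two such bubbles, one in $\Omega^+_p$ and one in $\Omega^-_p$, are the ``same'' bubble, which is handled by the $NS_p$-distance argument above, but the indexing needs care.
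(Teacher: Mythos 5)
Your proposal hinges on upgrading the crude bound $\liminf_p |u_p(y_{i,p})|\geq 1$ to $\liminf_p |u_p(y_{i,p})|\geq\sqrt e$, and this step is a genuine gap. The sketch you give (``reproduce the Santra--Wei computation locally'' by matching $\gamma_i^2/(64\pi^2)$ against bubble heights) is not an argument: the local Pohozaev identity of Lemma \ref{local} only relates the limiting local $(p+1)$-energy to $\gamma_i=\gamma_i^+-\gamma_i^-$, and $\gamma_i^{\pm}$ are not a priori expressible as sums of $64\pi^2 M_\ell$ over bubbles --- they may contain tail contributions between the scales $\varepsilon_{j,p}$ and $\rho$, several bubbles of either sign may collapse onto $x_i$, and the two signs can cancel inside $\gamma_i$. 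Worse, the identification ``height $=\sqrt e$'' is established in this paper only for \emph{least energy} nodal solutions, in Section 4 (Proposition \ref{prop4.1}, Proposition \ref{prop5.2}, Corollary \ref{cor}), and those results are themselves consequences of Lemma \ref{l5.1}; Lemma \ref{l5.1} is stated and used for \emph{general} nodal solutions satisfying \eqref{3.1}, without (G) and without the least-energy structure, so importing the $\sqrt e$ height bound here is circular, and no proof of it for general nodal solutions is available (or given by you). The same gap reappears in your treatment of \eqref{3-41}: splitting the energy over $\Omega_p^+$ and $\Omega_p^-$ is trivial (they are disjoint), but getting $64\pi^2e$ from each side again requires the unproven $\sqrt e$ height.

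The paper's route is entirely different and avoids heights altogether: one cuts off, $\widetilde u_p=u_p\psi\in H_0^2(B_{2r}(x_i))$, uses \eqref{31} to see that the cutoff changes $p\int|\Delta\cdot|^2$ and $p\int|\cdot|^{p+1}$ only by $o_p(1)$, and then applies the sharp embedding of Lemma \ref{Lemm27}, whose constant $D_{p+1}^{-2}\to 64\pi^2 e$ is where the factor $e$ comes from; the only input needed from Lemma \ref{Lem31}(3) is the crude bound $p\int_{B_r(x_i)}|u_p|^{p+1}\geq 64\pi^2+o_p(1)$, since $\bigl[64\pi^2+o_p(1)\bigr]^{2/(p+1)}\to 1$. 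One then converts $p\int_{B_r}|\Delta u_p|^2$ back to $p\int_{B_r}|u_p|^{p+1}$ by integration by parts, the boundary terms being $o_p(1)$ by \eqref{31}. For the ``moreover'' part, instead of two bubbles the paper uses Weth's dual-cone decomposition $\widetilde u_p=u_{p,1}+u_{p,2}$ with $\int\Delta u_{p,1}\Delta u_{p,2}=0$, $u_{p,1}\geq\widetilde u_p^+$, $u_{p,2}\leq\widetilde u_p^-$, and applies Lemma \ref{Lemm27} to each piece; the hypothesis \eqref{new 5.2} only enters through $(\varepsilon_1)^{2/(p+1)}\to 1$, so each piece yields $64\pi^2e$ and the orthogonality doubles the bound. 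If you want to salvage your approach, you would have to prove $\liminf_p|u_p(y_{i,p})|\geq\sqrt e$ for general nodal solutions directly, which is a substantially harder (and here unnecessary) task.
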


\begin{proof}
Take
$\psi\in C_{c}^\infty(B_{2r}(x_i))$ such that $0\leq \psi(x)\leq 1$ and
\be
\psi(x)=
\begin{cases}
  1, & \mbox{if } |x-x_i|\leq r \\
  0, & \mbox{if } |x-x_i|\in[\frac{4r}{3},2r).
\end{cases}\notag
\ee
Then $\widetilde{u}_{p}:=u_p\psi\in H_0^2(B_{2r}(x_i))$. Denote $R(r,\frac{4r}{3}):=\{x\in \Omega:r\leq|x-x_i|\leq\frac{4r}{3}\}$ for convenience.
Then by $(\ref{31})$, we obtain
\begin{align}
      &p\int_{B_{2r}(x_i)}|\dl \widetilde{u}_p|^2dx\notag\\
      =& p\int_{B_r(x_i)}|\dl u_{p}|^2dx+p\int_{R(r,\frac{4r}{3})}|\dl u_{p}|^2|\psi|^2dx
     + p\int_{R(r,\frac{4r}{3})}|u_{p}|^2|\dl\psi|^2dx\notag\\
     +&2p\int_{R(r,\frac{4r}{3})}|u_{p}\psi\dl u_{p}\dl\psi|dx+4p\int_{R(r,\frac{4r}{3})}|\langle \nabla u_{p},\nabla\psi\rangle|^2dx\notag\\
      +&4p\int_{R(r,\frac{4r}{3})}
      |u_{p}\dl\psi| |\langle \nabla u_{p},\nabla\psi\rangle| dx+4p\int_{R(r,\frac{4r}{3})}
      |\psi\dl u_{p}||\langle \nabla u_{p},\nabla\psi\rangle|dx
      \notag\\
      =&p\int_{B_r(x_i)}|\dl u_{p}|^2dx+o_{p}(1).\label{tt1}
\end{align}
Similarly,
\begin{align}
p\int_{B_{2r}(x_i)}|\widetilde{u}_{p}|^{p+1}dx
  =& p\int_{B_r(x_i)}|u_{p}|^{p+1}dx+p\int_{R(r,\frac{4r}{3})}|u_{p}|^{p+1}\psi^{p+1}dx\notag\\
  =&p\int_{B_r(x_i)}|u_{p}|^{p+1}dx+o_{p}(1)\nonumber\\
  \geq&64 \pi^2+o_p(1),\label{tt3}
\end{align}
where \eqref{3-35} is used to obtain the last inequality.
Thus, we deduce from (\ref{tt1})-(\ref{tt3}) and Lemma \ref{Lemm27} (i.e. apply it in $H_0^2(B_{2r}(x_i))$) that
\begin{align}\label{997}
    p\int_{B_r(x_i)}|\dl u_{p}|^2 dx
    =& p\int_{B_{2r}(x_i)}|\dl \widetilde{u}_{p}|^2dx +o_{p}(1)\notag\\
    \geq & \frac{p}{(p+1)p^{\frac{2}{p+1}}D_{p+1}^2}\left[p \int_{B_{2r}(x_i)}|\widetilde{u}_{p}|^{p+1}dx\right]^{\frac{2}{p+1}}+o_{p}(1) \notag\\
    \geq & (64\pi^2e+o_{p}(1))\left[64\pi^2+o_{p}(1)\right]^{\frac{2}{p+1}}+o_{p}(1) \notag\\
    \geq & 64\pi^2e+o_{p}(1)\quad\text{for $p$ large enough}.
\end{align}
On the other hand, by $\Delta^2u_p=|u_p|^{p-1}u_p$ and (\ref{31}), we have
\begin{align}\label{eq5.8}
    p\int_{B_r(x_i)}|\dl u_p|^2 dx=& p\int_{B_r(x_i)}|u_p|^{p+1} dx\notag\\
    &-p\int_{\pa B_r(x_i)} u_p\frac{\pa\dl u_p}{\pa\nu} dS+p\int_{\pa B_r(x_i)}\dl u_p \frac{\pa u_p}{\pa\nu} dS\notag\\
    =&p\int_{B_r(x_i)}| u_p|^{p+1}dx+o_p(1).
\end{align}
This proves \eqref{5.1}. Together with \eqref{enen}, we obtain \eqref{5.1-1}.

Now we assume \eqref{new 5.2} and prove \eqref{3-41}. To this goal, we need to apply a decomposition of $H_0^2(B_{2r}(x_i))$ from \cite{Weth2006dual}.
Consider the convex closed cone
$$\mathcal{K}=\{u\in H_0^2(B_{2r}(x_i))\;:\;u\geq0 \text{ a.e. in }\RN\}$$ and its dual cone $$\mathcal{K}^*=\{u\in H_0^2(B_{2r}(x_i))\;:\;\int_{B_{2r}(x_i)}\Delta u\Delta v\leq0 \text{ for all } v\in\mathcal{K}\}.$$
Since $\widetilde{u}_{p}=u_p\psi\in H_0^2(B_{2r}(x_i))$, it follows from \cite[Lemma 3.8]{Weth2006dual} that there exist $u_{p,1}\in\mathcal{K}\text{ and }u_{p,2}\in\mathcal{K}^*$ such that
\begin{align}\label{eq5.10}
    \widetilde{u}_p=u_{p,1}+u_{p,2}\,\, ,\quad u_{p,1}\geq\widetilde{u}_p^+\geq 0,\quad u_{p,2}\leq \widetilde{u}_p^-\leq 0,
\end{align}
\[\int_{B_{2r}(x_i)}\Delta {u}_{p,1}\Delta {u}_{p,2}=0.\]
Thus
\begin{align*}
    \int_{B_{2r}(x_i)}|\Delta \widetilde{u}_p|^2=\int_{B_{2r}(x_i)}|\Delta {u}_{p,1}|^2+\int_{B_{2r}(x_i)}|\Delta {u}_{p,2}|^2.
\end{align*}
By (\ref{new 5.2}) and (\ref{eq5.10}) and $\widetilde{u}_p=u_p$ in $B_r(x_i)$,
we have
\begin{align*}
    \liminf\limits_{p\to \infty} p\int_{B_{2r}(x_i)}|u_{p,1}|^{p+1}&\geq \liminf\limits_{p\to \infty} p\int_{B_{r}(x_i)}|u_p^+|^{p+1}\\
    &=\liminf\limits_{p\to \infty} p\int_{B_{r}(x_i)\cap\Omega_p^+}|u_p|^{p+1}\geq\varepsilon_1,
\end{align*}
and similarly
\[\liminf\limits_{p\to \infty} p\int_{B_{2r}(x_i)}|u_{p,2}|^{p+1}\geq\varepsilon_1.\]
Then by Lemma $\ref{Lemm27}$ again,
\begin{align*}
    p\int_{B_{2r}(x_i)}|\dl u_{p,i}|^2dx
    \geq & \frac{p}{(p+1)p^{\frac{2}{p+1}}D_{p+1}^2}\left[p \int_{B_{2r}(x_i)}|u_{p,i}|^{p+1} dx\right]^{\frac{2}{p+1}} \notag\\
    \geq & (64\pi^2e+o_{p}(1))(\varepsilon_1)^{\frac{2}{p+1}} \notag\\
    \geq & 64\pi^2e+o_{p}(1)\quad \text{for $p$ large enough, $i=1,2.$}
\end{align*}
Hence,
\begin{align}\label{new 5.14}
    \liminf\limits_{p\rt\iy}p\int_{B_{2r}(x_i)} |\Delta\widetilde{u}_p|^2\geq128\pi^2e.
\end{align}
Finally, \eqref{3-41} follows from (\ref{tt1}), (\ref{eq5.8}) and (\ref{new 5.14}).
\end{proof}

\section{Proof of Theorem \ref{Theorem 1.1} and Theorem \ref{th12}}

\label{section-5}

In this section, we assume that the domain $\Omega$ satisfies the condition (G) and let $(u_p)_{p>1}$ be the least energy nodal solutions of \eqref{PP}, the energy estimate of which was already studied in Section 2. Now we want to apply those results of Sections 2-3 to prove Theorem \ref{Theorem 1.1} and Theorem \ref{th12}.

We use the same notations in Sections 2-\ref{section-3}. Recall that \[u_p(x_p^+)=\Vert u_p^+ \Vert_{\infty}=\Vert u_p \Vert_{\infty}\geq -u_p(x_p^-)=\Vert u_p^- \Vert_{\infty},\]  and $x^+=\lim\limits_{p\rt\infty}x_p^+=x_1\in\s$.

\begin{Proposition}\label{prop666}
\begin{align}\label{use4.3}
    \limsup\limits_{p \to \infty} \left\Vert u_{p} \right\Vert_{\infty}\leq \sqrt{e}.
\end{align}

\begin{proof}
Thanks to Proposition \ref{prop4.1} and Corollary \ref{cor31}, we have
\begin{align}
   64\pi^2e+o(1)=&p \int_{\Omega}|u_{p}^+|^{p+1} \nonumber\\
     =&u_{p}(x_{p}^+) ^2 \int_{\frac{\Omega_p^+-x_p^+}{\varepsilon_p^+}}\left|1+\frac{W_{p^+}(z)}{p}\right|^{p+1} dz\notag\\
     \geq & u_{p}(x_{p}^+) ^2 (\int_{\mathbb{R}^4}e^W+o(1))
     =  u_{p}(x_{p}^+)^2 (64\pi^2+o(1)),
\end{align}
which implies (\ref{use4.3}).
\end{proof}
\end{Proposition}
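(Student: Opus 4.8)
The plan is to transplant the sharp energy asymptotics $p\int_\Omega|u_p^+|^{p+1}\to 64\pi^2 e$ of Proposition \ref{prop4.1} onto the bubbling scale around the maximum point $x_p^+$, where the blow-up profile has already been identified in Corollary \ref{cor31} (equivalently Proposition \ref{prop22}).

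First I would rewrite $p\int_\Omega|u_p^+|^{p+1}$ as an integral over the rescaled set $(\Omega_p^+-x_p^+)/\varepsilon_p^+$. From the definition \eqref{wp+} of $W_{p^+}$ one has $1+W_{p^+}(z)/p=u_p(x_p^++\varepsilon_p^+z)/u_p(x_p^+)$, so after the substitution $y=x_p^++\varepsilon_p^+z$, using \eqref{2-6} to cancel the Jacobian $(\varepsilon_p^+)^4=1/\big(p|u_p(x_p^+)|^{p-1}\big)$ against the factor $|u_p(x_p^+)|^{p+1}$, we reach the exact identity
\begin{equation}\label{prop666-key}
p\int_{\Omega}|u_p^+|^{p+1}\,dy=u_p(x_p^+)^2\int_{\frac{\Omega_p^+-x_p^+}{\varepsilon_p^+}}\left|1+\frac{W_{p^+}(z)}{p}\right|^{p+1}\,dz .
\end{equation}

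Next I would pass to the limit $p\to\infty$ on the right-hand side of \eqref{prop666-key}. By Lemma \ref{Lem25} and Corollary \ref{cor31} (for $j=1$, where $x_{1,p}=x_p^+$), up to a subsequence the rescaled sets $(\Omega_p^+-x_p^+)/\varepsilon_p^+$ exhaust $\mathbb{R}^4$ and $W_{p^+}\to W=-4\log(1+|x|^2/(8\sqrt6))$ in $C^4_{loc}(\mathbb{R}^4)$; since $0<1+W_{p^+}(z)/p\le1$ for $z\in(\Omega_p^+-x_p^+)/\varepsilon_p^+$, the local uniform convergence gives $(1+W_{p^+}/p)^{p+1}\to e^W$ locally uniformly, and Fatou's lemma yields $\liminf_{p\to\infty}\int_{(\Omega_p^+-x_p^+)/\varepsilon_p^+}(1+W_{p^+}/p)^{p+1}\,dz\ge\int_{\mathbb{R}^4}e^W=64\pi^2$. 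Inserting this into \eqref{prop666-key} together with Proposition \ref{prop4.1} gives $64\pi^2 e+o(1)\ge u_p(x_p^+)^2\,(64\pi^2+o(1))$, hence $\|u_p\|_\infty^2=u_p(x_p^+)^2\le e+o(1)$, that is, \eqref{use4.3}.

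The argument is short, and I do not expect a genuine obstacle once Proposition \ref{prop4.1} and the bubble convergence of Corollary \ref{cor31} are available. The only point deserving some care is the passage to the limit inside the integral over the expanding domains --- namely deducing $(1+W_{p^+}/p)^{p+1}\to e^W$ from the $C^0_{loc}$ convergence together with a local bound on $W_{p^+}$ --- so that Fatou's lemma legitimately applies.
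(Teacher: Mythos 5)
Your proposal is correct and follows essentially the same route as the paper: the exact rescaling identity $p\int_{\Omega}|u_p^+|^{p+1}=u_p(x_p^+)^2\int_{(\Omega_p^+-x_p^+)/\varepsilon_p^+}\bigl|1+\tfrac{W_{p^+}}{p}\bigr|^{p+1}dz$, the convergence from Corollary \ref{cor31} together with the exhaustion of $\mathbb{R}^4$ by the rescaled positive nodal region, a Fatou-type lower bound by $\int_{\mathbb{R}^4}e^W=64\pi^2$, and Proposition \ref{prop4.1} to close the estimate. No gap to report.
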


\begin{Proposition}\label{prop51}
$\#\s=2$ and so $\s=\{x_1, x_2\}$ with $x_1=x^+=\lim\limits_{p\to\infty}x_p^+$.
\end{Proposition}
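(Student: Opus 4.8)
The plan is to establish $\#\s\le 2$ and $\#\s\ge 2$ separately; since $x_1=x^+\in\s$ by Remark \ref{rmk3-9}, this gives $\s=\{x_1,x_2\}$ with $x_1=x^+$.

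For the upper bound I would write $\s=\{x_1,\dots,x_N\}$ and fix $r\in(0,r_0)$ so that the balls $\overline{B_r(x_i)}\subset\Omega$ are pairwise disjoint. Then \eqref{31} gives $\|u_p\|_{L^\infty(\overline\Omega\setminus\bigcup_i B_r(x_i))}\le C(r)/p$, so $p\int_\Omega|u_p|^{p+1}=\sum_{i=1}^N p\int_{B_r(x_i)}|u_p|^{p+1}+o_p(1)$. By Proposition \ref{Lem26} the left side tends to $128\pi^2e$, while Lemma \ref{l5.1} gives $\liminf_p p\int_{B_r(x_i)}|u_p|^{p+1}\ge 64\pi^2e$ for each $i$; since the limit of a sum dominates the sum of $\liminf$'s, $128\pi^2e\ge 64N\pi^2e$, hence $N\le 2$.

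For the lower bound the idea is to suppose, for contradiction, that $N=1$, so $\s=\{x_1\}$. Then Proposition \ref{prop3.2}, Lemma \ref{local} and Proposition \ref{Lem26} give $\tfrac{\gamma_1^2}{64\pi^2}=\lim_p p\int_\Omega|u_p|^{p+1}=128\pi^2e$, i.e. $|\gamma_1|=64\pi^2\sqrt{2e}$, and I will contradict this through $\gamma_1=\gamma_1^+-\gamma_1^-$. Because $\s=\{x_1\}$, \eqref{31} forces $p\int_{\Omega\setminus B_\rho(x_1)}|u_p|^{p+1}\to0$ for every fixed $\rho\in(0,r_0)$, which together with Proposition \ref{prop4.1} ($p\int_\Omega|u_p^\pm|^{p+1}\to 64\pi^2e$) yields $p\int_{B_\rho(x_1)\cap\Omega_p^\pm}|u_p|^{p+1}\to 64\pi^2e$. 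Two applications of Hölder's inequality then bracket $\gamma_1^\pm$: from $\int_{B_\rho(x_1)\cap\Omega_p^\pm}|u_p|^{p+1}\le\|u_p^\pm\|_\infty\int_{B_\rho(x_1)\cap\Omega_p^\pm}|u_p|^p$ together with Proposition \ref{prop666} ($\limsup_p\|u_p^\pm\|_\infty\le\sqrt e$) I get $\gamma_1^\pm\ge 64\pi^2\sqrt e$, and from $p\int_{B_\rho(x_1)\cap\Omega_p^\pm}|u_p|^p\le\big(p\int_\Omega|u_p^\pm|^{p+1}\big)^{p/(p+1)}(p|B_\rho|)^{1/(p+1)}\to 64\pi^2e$ I get $\gamma_1^\pm\le 64\pi^2e$. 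Hence $|\gamma_1|=|\gamma_1^+-\gamma_1^-|\le 64\pi^2(e-\sqrt e)<64\pi^2\sqrt{2e}$, the last inequality being equivalent to $\sqrt e-1<\sqrt2$; this contradicts $|\gamma_1|=64\pi^2\sqrt{2e}$. Therefore $N\ne1$, and with the upper bound $N=2$.

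The main analytic effort is hidden in the upper bound, but it is supplied by Lemma \ref{l5.1} (the dual-cone decomposition), which is already proved; inside the present argument the only delicate part will be the bookkeeping in the lower-bound step — identifying $\gamma_1^\pm$ with $\lim_{\rho\to0}\lim_p p\int_{B_\rho(x_1)\cap\Omega_p^\pm}|u_p|^p$ along a common subsequence, and observing that it is precisely the numerical gap $e-\sqrt e<\sqrt{2e}$ that closes the argument. Conceptually, a single blow-up point is impossible because it would force both masses $\gamma_1^\pm$ into the narrow window $[64\pi^2\sqrt e,\,64\pi^2e]$, far too close together to produce, via Lemma \ref{local}, the doubled total energy $128\pi^2e$.
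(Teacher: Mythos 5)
Your proposal matches the paper's own argument essentially line for line: the upper bound $N\le 2$ via Proposition \ref{Lem26} and Lemma \ref{l5.1}, and the exclusion of $N=1$ by combining the Pohozaev quantization $|\gamma_1^+-\gamma_1^-|=64\pi^2\sqrt{2e}$ from \eqref{3-34-2} with the two-sided bracket $64\pi^2\sqrt e\le\gamma_1^\pm\le 64\pi^2 e$ obtained from Propositions \ref{prop4.1}, \ref{prop666}, and H\"older's inequality. The only cosmetic difference is that you make the closing arithmetic explicit ($e-\sqrt e<\sqrt{2e}$, equivalently $\sqrt e-1<\sqrt2$), whereas the paper just notes the contradiction; everything else, including the identification of $\gamma_1^\pm$ via \eqref{38} and the use of \eqref{31} to localize the energy near $x_1$, is the same.
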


\begin{proof}
Proposition \ref{Lem26} and Lemma \ref{l5.1} imply
\begin{align*}
   128\pi^2e=\lim_{p\to\infty}p\int_{\Omega} \left|u_p\right|^{p+1}\geq 64 N\pi^2 e,
\end{align*}
so $N=\#\s\leq 2$.
If $\s=\{x_1\}$, then \eqref{3-34-2} gives
\[128\pi^2e=\frac{(\gamma_1^+-\gamma_1^-)^2}{64\pi^2},\]
i.e.
$|\gamma_1^+-\gamma_1^-|=64\pi^2 \sqrt{2e}.$

On the other hand, for any $\rho\in (0,r_0)$,
we see from \eqref{31} that
\begin{align}\label{4-4-2}p\int_{\Omega}|u_{p}^\pm|^{{p}+1} dx
=p\int_{B_\rho(x_1)}|u_{p}^\pm|^{{p}+1} dx+o_p(1),\end{align}
and so Proposition \ref{prop4.1} yields
\begin{align}\label{4-4-3}
    \lim\limits_{p\to \infty} p\int_{B_{\rho}(x_1)}|u_{p}^{\pm}|^{{p}+1} dx=\lim\limits_{p\to \infty} p\int_{\Omega}|u_{p}^{\pm}|^{{p}+1} dx=64 \pi^2 e.
\end{align}
Since Proposition $\ref{prop666}$ implies
\begin{align}
    p\int_{B_{ \rho}(x_1)}|u_{p}^{\pm}|^{{p}+1} dx \leq \sqrt{e} p\int_{B_{ \rho}(x_1)}|u_{p}^{\pm}|^{{p}} dx=\sqrt{e} p\int_{B_{ \rho}(x_1)\cap\Omega_p^\pm}|u_{p}|^{{p}} dx,
\end{align}
we conclude from the definition \eqref{38} of $\gamma_{i}^{\pm}$, i.e.
\[\gamma_1^\pm=\lim\limits_{\rho\rt 0}\lim\limits_{p \to \iy} p\int_{B_{\rho }(x_1)\cap \Omega_p^\pm}|u_{p}(x)|^{p}dx,\]
that $\gamma_1^{\pm} \geq 64 \pi^2 \sqrt{e}$.
Furthermore, by H\"{o}lder inequality and \eqref{4-4-3},
\begin{align}\label{gamma1}
    \gamma_1^\pm\leq
    \lim\limits_{p\to \infty} p^{\frac{1}{p+1}}\left(p\int_{B_{ \rho}(x_1)}|u_{p}^\pm|^{p+1}dx\right)^{\frac{p}{p+1}}\left|\Omega\right|^{\frac{1}{p+1}} \leq 64\pi^2 e,
\end{align}
so $64 \pi^2 \sqrt{e}\leq \gamma_1^{\pm}\leq 64 \pi^2e$, clearly a contradiction with
$|\gamma_1^+-\gamma_1^-|=64\pi^2 \sqrt{2e}.$
Therefore, $\#\s=2$.
\end{proof}

\begin{Proposition}\label{prop5.2}
For any $\rho\in (0,r_0)$, we have
\begin{align*}
    \lim\limits_{p\to \infty} p\int_{B_{\rho}(x_1)}|u_{p}^+|^{p+1}dx&=
    \lim\limits_{p\to \infty} p\int_{B_{\rho}(x_2)}|u_{p}^-|^{p+1}dx=64\pi^2 e,\\
    \lim\limits_{p\to \infty} p\int_{B_{\rho}(x_1)}|u_{p}^-|^{p+1}dx&=
    \lim\limits_{p\to \infty} p\int_{B_{\rho}(x_2)}|u_{p}^+|^{p+1}dx=0.
\end{align*}
Moreover, $\gamma_1^+=\gamma_2^-=64\pi^2\sqrt{e}$ and $\gamma_1^-=\gamma_2^+=0$.
\end{Proposition}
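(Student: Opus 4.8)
The plan is to build on Proposition~\ref{prop51}, which gives $\s=\{x_1,x_2\}$ with $x_1=x^+$, together with the energy-splitting results of Lemma~\ref{l5.1} and the mass computation of Lemma~\ref{local}. The key idea is that the total energy $128\pi^2 e$ must split as exactly $64\pi^2 e$ at each of the two blow-up points, and moreover at each $x_i$ the solution is \emph{one-signed} near $x_i$ (so $\varepsilon_1$-type two-sided mass cannot occur), which forces $\gamma_i^+\gamma_i^-=0$ for each $i$. Then the already-established lower bounds $\gamma_1^+\ge 64\pi^2\sqrt e$ and $\gamma_1^-\ge 64\pi^2\sqrt e$ from the proof of Proposition~\ref{prop51} pin down $\gamma_1$, and symmetry (since $u_p^-$ must also carry mass $64\pi^2 e$ by Proposition~\ref{prop4.1}) forces the second point $x_2$ to be the negative blow-up point carrying exactly the opposite mass.

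First I would argue that near $x_1$ the function $u_p$ is eventually of one sign, more precisely $\lim_{p\to\infty} p\int_{B_\rho(x_1)\cap\Omega_p^-}|u_p|^{p+1}=0$. Indeed, if this limit were positive along a subsequence, then \eqref{new 5.2} in Lemma~\ref{l5.1} would apply at $x_1$ (the positive mass near $x_1$ being at least $64\pi^2 e$ by Proposition~\ref{prop666}-type estimates as in \eqref{4-4-3}), yielding $\liminf_p p\int_{B_r(x_1)}|u_p|^{p+1}\ge 128\pi^2 e$. Combined with $\liminf_p p\int_{B_r(x_2)}|u_p|^{p+1}\ge 64\pi^2 e$ from \eqref{5.1} and \eqref{enen}, this gives $\lim_p p\int_\Omega |u_p|^{p+1}\ge 192\pi^2 e$, contradicting Proposition~\ref{Lem26}. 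Hence the negative mass near $x_1$ vanishes, and symmetrically the positive mass near $x_2$ vanishes (the point $x_2$ must carry the negative part's blow-up, since by Proposition~\ref{prop4.1} $p\int_\Omega|u_p^-|^{p+1}\to 64\pi^2 e$ and by \eqref{4-4-2}-type localization this mass must concentrate at some point of $\s$, necessarily $x_2$).

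Next, having shown $\lim_p p\int_{B_\rho(x_1)}|u_p^-|^{p+1}=0$, I combine with \eqref{4-4-3} (which already gives $\lim_p p\int_{B_\rho(x_1)}|u_p^+|^{p+1}=64\pi^2 e$) to get the first and second displayed limits at $x_1$; the analogous statements at $x_2$ follow identically, using that $p\int_\Omega|u_p^-|^{p+1}\to 64\pi^2 e$ is now entirely localized at $x_2$. For the $\gamma$-values: from the proof of Proposition~\ref{prop51} we already have $\gamma_1^+\ge 64\pi^2\sqrt e$, while $\gamma_1^-=\lim_{\rho\to0}\lim_p p\int_{B_\rho(x_1)\cap\Omega_p^-}|u_p|^p$ is controlled by H\"older's inequality against the vanishing $p\int_{B_\rho(x_1)}|u_p^-|^{p+1}$, giving $\gamma_1^-=0$. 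Then Lemma~\ref{local} at $x_1$ reads $64\pi^2 e=\gamma_1^2/(64\pi^2)=(\gamma_1^+)^2/(64\pi^2)$, so $\gamma_1^+=64\pi^2\sqrt e$. The same reasoning at $x_2$ (with the roles of $+$ and $-$ exchanged) yields $\gamma_2^-=64\pi^2\sqrt e$ and $\gamma_2^+=0$.

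The main obstacle I anticipate is the first step: rigorously ruling out two-sided concentration at a single blow-up point. One has to be careful that the hypothesis \eqref{new 5.2} of Lemma~\ref{l5.1} is genuinely triggered — this requires knowing that the positive part near $x_1$ carries a definite amount of energy bounded below, which comes from Lemma~\ref{Lem31}(3) applied with $y_{i,p}\in\Omega_p^+$ once one knows $u_p(x_p^+)=\|u_p\|_\infty$ sits near $x_1$, so the maximum of $|u_p|$ on $\overline{B_r(x_1)}$ is attained in $\Omega_p^+$. Assembling these localization facts cleanly — that each point carries exactly $64\pi^2 e$, no more and no less, and of a definite sign — is the crux; once that dichotomy is in place, the arithmetic with Lemma~\ref{local} is routine.
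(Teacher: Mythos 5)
Your proposal is correct and follows essentially the same route as the paper's proof: the exact $64\pi^2e$ split of the total energy between the two points of $\s$, the two-sided-concentration dichotomy of Lemma~\ref{l5.1} (triggered by the lower bound from Lemma~\ref{Lem31}(3) at $x_1$, as you note) to kill the negative mass near $x_1$, localization of the global $\pm$ masses from Proposition~\ref{prop4.1} to transfer the statement to $x_2$, H\"older's inequality for $\gamma_1^-=\gamma_2^+=0$, and Lemma~\ref{local} for $\gamma_1^+=\gamma_2^-=64\pi^2\sqrt e$. One caveat: \eqref{4-4-3} and the bounds $\gamma_1^{\pm}\geq 64\pi^2\sqrt e$ were derived in the proof of Proposition~\ref{prop51} only under the reductio hypothesis $\s=\{x_1\}$, so they cannot be cited as standing facts here; this is harmless, since the facts you actually need (positive mass exactly $64\pi^2e$ near $x_1$, and then $\gamma_1^+=64\pi^2\sqrt e$) already follow from your exact-split argument combined with the vanishing of the negative mass at $x_1$ and Lemma~\ref{local}.
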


\begin{proof}
By $\s=\{x_1, x_2\}$, Proposition \ref{Lem26} and Lemma \ref{l5.1}, we have
\begin{align}\label{5.18}
    \lim\limits_{p\to \infty}\int_{B_{\rho}(x_i)}|u_p|^{p+1}=64\pi^2 e,\quad i=1,2.
\end{align}
Besides, by a similar argument as \eqref{4-4-2}-\eqref{4-4-3}, we see from
Proposition \ref{prop4.1} that
\begin{align}
   64\pi^2 e &=\lim\limits_{p\to \infty} p\int_{\Omega}|u_{p}^{\pm}|^{p+1}dx\notag\\
   &=
\lim\limits_{p\to \infty} p\int_{B_{\rho}(x_1)}|u_{p}^{\pm}|^{p+1}dx
    +\lim\limits_{p\to \infty}p\int_{B_{\rho}(x_2)}|u_{p}^{\pm}|^{p+1}dx. \label{5.19}
\end{align}
Since $x_1=x^+=\lim_{p\to\infty}x_p^+$,
it follows from Lemma \ref{Lem31} (3) that
\begin{align}
    \lim\limits_{p\to \infty} p\int_{B_{\rho}(x_1)}|u_{p}^{+}|^{p+1}dx
    =\lim\limits_{p\to \infty} p\int_{B_{\rho}(x_1)\cap\Omega_p^+}|u_{p}|^{p+1}dx
    \geq 64\pi^2.
\end{align}
If up to a subsequence, there exists $\varepsilon_1>0$ such that
\begin{align}\label{5.20}
    \lim\limits_{p\to \infty} p\int_{B_{\rho}(x_1)}|u_{p}^{-}|^{p+1}dx
    =\lim\limits_{p\to \infty} p\int_{B_{\rho}(x_1)\cap\Omega_p^-}|u_{p}|^{p+1}dx\geq
    \varepsilon_1,
\end{align}
Then Lemma \ref{l5.1} implies
\[
    \lim\limits_{p\to \infty} p\int_{B_{\rho}(x_1)}|u_{p}|^{p+1}dx\geq
    128\pi^2 e,
\]
which is a contradiction with (\ref{5.18}).
Thus,
\begin{align}
    \lim\limits_{p\to \infty} p\int_{B_{\rho}(x_1)}|u_{p}^+|^{p+1}dx=
    64\pi^2 e,\quad\lim\limits_{p\to \infty} p\int_{B_{\rho}(x_1)}|u_{p}^-|^{p+1}dx=
    0,
\end{align}
and then it follows from \eqref{5.19} that
\begin{align}
     \lim\limits_{p\to \infty} p\int_{B_{\rho}(x_2)}|u_{p}^+|^{p+1}dx=
   0,\quad\lim\limits_{p\to \infty} p\int_{B_{\rho}(x_2)}|u_{p}^-|^{p+1}dx=64\pi^2 e.
\end{align}
Consequently, similarly as \eqref{gamma1} we have
\begin{align}
    0\leq \gamma_1^- \leq \lim\limits_{p\to \infty} p^{\frac{1}{p+1}}\left(p\int_{B_{\rho}(x_1)}|u_{p}^-|^{p+1}dx\right)^{\frac{p}{p+1}}\left|\Omega\right|^{\frac{1}{p+1}} =0,
\end{align}
i.e. $\gamma_1^-=\gamma_2^+=0$. Since \eqref{3-34} and \eqref{5.18}
imply $|\gamma_i^+-\gamma_i^-|=64\pi^2\sqrt{e}$ for $i=1,2$, we finally obtain
$\gamma_1^+=\gamma_2^-=64\pi^2\sqrt{e}$.
\end{proof}

\begin{Lemma}\label{Lemma last}
For any $\rho\in (0,r_0/2)$, we have
\[\liminf_{p\to\infty} \inf_{x\in B_\rho(x_1)}u_p(x)\geq 0,\quad
\limsup_{p\to\infty} \sup_{x\in B_\rho(x_2)}u_p(x)\leq 0.\]
\end{Lemma}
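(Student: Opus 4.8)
The plan is to read off the sign behaviour of $u_p$ near $x_1$ directly from the integral representation $u_p=T_p(|u_p|^{p-1}u_p)$, decomposing the right-hand side into the contributions of the positive and negative parts of $u_p$. Writing $\nu_p^\pm:=|u_p^\pm|^{p-1}u_p^\pm$, linearity of $T_p$ gives $u_p=T_p\nu_p^++T_p\nu_p^-$, and condition (G) (i.e. property $(T_1)$) forces $T_p\nu_p^+>0>T_p\nu_p^-$ in $\Omega$; hence
\[
u_p(x)\ \ge\ T_p\nu_p^-(x)\ =\ -\int_\Omega G(x,y)\,|u_p^-(y)|^p\,dy\qquad\text{for all }x\in\Omega .
\]
Thus it suffices to prove that $\sup_{x\in B_\rho(x_1)}\int_\Omega G(x,y)|u_p^-(y)|^p\,dy\to0$ as $p\to\infty$; the estimate near $x_2$ is entirely analogous, based instead on $u_p(x)\le T_p\nu_p^+(x)=\int_\Omega G(x,y)|u_p^+(y)|^p\,dy$.

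To control this integral I would fix an intermediate radius $\rho'\in(\rho,r_0)$ and split $\Omega=B_{\rho'}(x_1)\cup(\Omega\setminus B_{\rho'}(x_1))$. On the outer region, $|x-y|\ge\rho'-\rho>0$ for $x\in B_\rho(x_1)$, so $0\le G(x,y)\le C$ by \eqref{2-3}, and \eqref{est-v-1} then gives $\int_{\Omega\setminus B_{\rho'}(x_1)}G(x,y)|u_p^-(y)|^p\,dy\le C\int_\Omega|u_p|^p=O(1/p)$. On $B_{\rho'}(x_1)$ I would use Hölder's inequality with exponents $p+1$ and $\tfrac{p+1}{p}$ together with \eqref{ggg} (which yields $\|G(x,\cdot)\|_{L^{p+1}(\Omega)}\le Cp$ for $p$ large, uniformly in $x$), obtaining
\[
\int_{B_{\rho'}(x_1)}G(x,y)|u_p^-(y)|^p\,dy\ \le\ Cp\Big(\int_{B_{\rho'}(x_1)}|u_p^-|^{p+1}\Big)^{\frac{p}{p+1}}\ =\ C\,\varepsilon_p^{\frac{p}{p+1}}\,p^{\frac1{p+1}},
\]
where $\varepsilon_p:=p\int_{B_{\rho'}(x_1)}|u_p^-|^{p+1}\to0$ by Proposition \ref{prop5.2} (applicable since $\rho'<r_0$). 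Because $\varepsilon_p^{p/(p+1)}\to0$ and $p^{1/(p+1)}\to1$, this tends to $0$ uniformly in $x\in B_\rho(x_1)$, and adding the two pieces finishes the first inequality; running the same argument with $x_1,u_p^-$ replaced by $x_2,u_p^+$ and using $p\int_{B_{\rho'}(x_2)}|u_p^+|^{p+1}\to0$ from Proposition \ref{prop5.2} gives the second.

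The delicate step, and the one I would be most careful about, is this last estimate: it works precisely because $\|G(x,\cdot)\|_{L^{p+1}}$ grows no faster than linearly in $p$ (by \eqref{ggg}), so that its product with the small factor $(\varepsilon_p/p)^{p/(p+1)}$ still vanishes — a superlinear growth rate would destroy the argument. It is also worth emphasising that nothing here is circular: the proof uses only the representation formula and the energy-localization facts of Proposition \ref{prop5.2}, never the conclusion that $u_p$ keeps a fixed sign near $x_i$, so the argument is self-contained within Section~\ref{section-5}.
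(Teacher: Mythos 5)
Your proof is correct, and it takes a genuinely different route from the paper's. You represent $u_p$ through the global Green function on $\Omega$ and invoke condition (G) via the positivity property $(T_1)$: since $T_p\nu_p^+>0$, the positive part's contribution can be discarded outright, reducing the statement to showing that $\sup_{x\in B_\rho(x_1)}\int_\Omega G(x,y)|u_p^-(y)|^p\,dy\to0$. You then handle this by a near/far split, with the crude bound $|G(x,y)|\le C$ (for $|x-y|\ge\rho'-\rho$) plus \eqref{est-v-1} on the far region, and H\"older against $\|G(x,\cdot)\|_{L^{p+1}}\le Cp$ from \eqref{ggg} combined with $p\int_{B_{\rho'}(x_1)}|u_p^-|^{p+1}\to0$ from Proposition \ref{prop5.2} on the near region. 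The paper instead works with the \emph{local} Green function $\widetilde G$ of the ball $B_{2\rho}(x_1)$. That choice carries two extra boundary integrals, which have to be estimated via \eqref{31}, and the paper does not drop the positive part but only bounds it from below by $-C/p$ using the inequality $\widetilde G\ge -C$ from Lemma \ref{Lemma 2.1}. The decisive estimate on the negative part near $x_1$ — H\"older with \eqref{ggg} together with Proposition \ref{prop5.2} — is common to both. Your version is cleaner (no boundary terms, no separate far-tail estimate inside the ball), at the price of using the sign of $G$, which the paper's ball-representation argument avoids invoking directly; since both rely on Proposition \ref{prop5.2}, which ultimately uses (G) elsewhere in the section, there is no real loss of generality either way. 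One cosmetic remark: your far-field bound does not actually need $G\ge0$; the two-sided bound $|G(x,y)|\le C$ from \eqref{2-3} already suffices there.
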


\begin{proof}
Let $\widetilde{G}(x,y)$ is the Green function of $\Delta^2$ on $B_{2\rho}(x_1)$ under Dirichlet boundary conditions, i.e.
\[\begin{cases}
\Delta_x^2 \widetilde{G}(x,y)=\delta(x-y) \quad &\hbox{in}\;B_{2\rho}(x_1), \\
\widetilde{G}(x,y)=\frac{\partial G(x,y)}{\partial \nu}=0 \ \ &\hbox{for}\;x\in\partial B_{2\rho}(x_1).
\end{cases}\]
Then for any $x\in B_{\rho}(x_1)$, we have
\begin{align}\label{s5}
    u_p(x)=&\int_{B_{2\rho}(x_1)}\widetilde{G}(x,y)|u_p^+(y)|^{p} dy-\int_{B_{2\rho}(x_1)}\widetilde{G}(x,y)|u_p^-(y)|^{p} dy\nonumber\\
    +&\int_{\partial B_{2\rho}(x_1)}\left(u_p(y)\frac{\partial\Delta_y \widetilde{G}(x,y)}{\partial \nu}-\Delta_y \widetilde{G}(x,y)\frac{\partial u_p(y)}{\partial \nu}\right) dS.
\end{align}
Since $|x-y|\geq \rho$ for $x\in B_{\rho}(x_1)$ and $y\in \partial B_{2\rho}(x_1)$,
we see from Lemma $\ref{Lemma 2.1}$ and \eqref{31} that
\begin{align}\label{s3}
    \left|\int_{\partial B_{2\rho}(x_1)} u_p(y)\frac{\partial\Delta_y \widetilde{G}(x,y)}{\partial \nu} dS\right|
    \leq \frac{C}{p}\int_{\partial B_{2\rho}(x_1)}
    \frac{dS}{|x-y|^3}\leq \frac{C}{p},
\end{align}
and similarly
\[
    \left|\int_{\partial B_{2\rho}(x_1)}\Delta_y \widetilde{G}(x,y)\frac{\partial u(y)}{\partial \nu}\ dS\right|\leq \frac{C}{p}.
\]
By Lemma \ref{Lemma 2.1} we also have
\[
    \int_{B_{2\rho}(x_1)}\widetilde{G}(x,y)|u_p^+(y)|^{p} dy\geq -C \int_{\Omega} |u_p^+(y)|^{p} dy \geq -\frac{C}{p}.
\]
On the other hand, recall \eqref{ggg} that
\[\int_{B_{2\rho}(x_1)}|\widetilde{G}(x,y)|^{p+1}dy\leq C(p+1)^{p+2},\]
so we deduce from H\"{o}lder inequality and Proposition \ref{prop5.2} that
\begin{align}\label{s1}
    &\left|\int_{B_{2\rho}(x_1)}\widetilde{G}(x,y)|u_p^-(y)|^{p} dy\right|\notag\\
    \leq & \left(C(p+1)^{p+2}\right)^{\frac1{p+1}} p^{-\frac{p}{p+1}}
    \left(p\int_{B_{2\rho}(x_1)}|u_p^-(y)|^{p+1}dy\right)^{\frac{p}{p+1}}
    =o_p(1).
\end{align}
Inserting (\ref{s3})-(\ref{s1}) into \eqref{s5} leads to $\liminf_{p\to\infty} \inf_{x\in B_\rho(x_1)}u_p(x)\geq 0$. The $\limsup_{p\to\infty} \sup_{x\in B_\rho(x_2)}u_p(x)\leq 0$ can be proved similarly.
\end{proof}

Recall Section 1 that $x_p^-\in\Omega_p^-$ such that $u_p(x_p^-)=-\|u_p^-\|_{\infty}$. Now we can show that $x^-:=\lim\limits_{p\rt\infty}x_p^-\in\s$ as follows.

\begin{Lemma}\label{prop5.3}
Recalling $x_1=x^+=\lim\limits_{p\to\infty}x_p^+$, we have $x_2=x^-:=\lim\limits_{p\rt\infty}x_p^-$ and so $x^+\neq x^-$.
\end{Lemma}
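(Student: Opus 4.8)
The plan is to locate the global minimum point $x_p^-$ near one of the two blow‑up points in $\s=\{x_1,x_2\}$ (recall Proposition \ref{prop51}) and then to exclude the possibility $x^-=x_1$. The two ingredients I would use are: a uniform lower bound on $\|u_p^-\|_\iy$, and the one–sided control of $u_p$ near $x_1,x_2$ provided by Lemma \ref{Lemma last} together with the decay estimate \eqref{31}.

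First I would show $\liminf_{p\to\iy}\|u_p^-\|_\iy\ge 1$. Indeed, by Proposition \ref{prop4.1} we have $p\int_{\Omega}|u_p^-|^{p+1}\to 64\pi^2 e>0$, while trivially $\int_{\Omega}|u_p^-|^{p+1}\le\|u_p^-\|_\iy^{p+1}|\Omega|$; taking the $(p+1)$‑th root of the resulting inequality $\|u_p^-\|_\iy^{p+1}\ge(64\pi^2e+o_p(1))/(p|\Omega|)$ and using $\tfrac{\log p}{p+1}\to 0$ gives the claim. In particular $p|u_p(x_p^-)|=p\|u_p^-\|_\iy\to\iy$, so taking $x_n=x_{p_n}^-$ in the definition of the blow‑up set $\s$ shows $x^-:=\lim_p x_p^-\in\s=\{x_1,x_2\}$. (The limit exists, because the localization below holds for every small radius.)

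Next I would rule out $x^-=x_1$. Fix $\rho\in(0,r_0/2)$ and apply \eqref{31} to the compact set $\mathscr{K}:=\overline{\Omega}\setminus(B_\rho(x_1)\cup B_\rho(x_2))$, which is disjoint from $\s$; this gives $|u_p|\le C(\rho)/p$ on $\mathscr{K}$. Combining this with Lemma \ref{Lemma last}, which yields $\inf_{B_\rho(x_1)}u_p\ge -o_p(1)$, we obtain $\inf_{\overline{\Omega}\setminus B_\rho(x_2)}u_p\ge -o_p(1)$. On the other hand, by the first step $u_p(x_p^-)=-\|u_p^-\|_\iy\le -1+o_p(1)$, so for $p$ large the minimum point $x_p^-$ must lie in $B_\rho(x_2)$. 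Since $\rho$ is arbitrary, $x^-=x_2$; and since $x_1\ne x_2$ (Proposition \ref{prop51}) and $x^+=x_1$, this gives $x^+\ne x^-$.

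Every step relies only on results established in Sections 2--3, so there is no serious obstacle. The single point that deserves a little care is the elementary lower bound $\liminf_p\|u_p^-\|_\iy\ge 1$ of the first step: it is precisely this uniform separation of the minimum value of $u_p$ from $0$ that makes Lemma \ref{Lemma last} and \eqref{31} effective for pinning $x_p^-$ down inside $B_\rho(x_2)$.
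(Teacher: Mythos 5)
Your argument is correct, and it differs from the paper's proof in one substantive step. The paper obtains the uniform negativity of the minimum by applying Lemma \ref{Lem31} at $x_2$: it takes $y_{2,p}$ maximizing $|u_p|$ on $\overline{B_r(x_2)}$, notes $\liminf_p|u_p(y_{2,p})|\ge 1$, and then uses Lemma \ref{Lemma last} to conclude $u_p(y_{2,p})\le-\tfrac12$, hence $u_p(x_p^-)\le-\tfrac12$; the exclusion of $x^-=x_1$ and of $x^-\notin\s$ then proceeds, as in your write-up, from Lemma \ref{Lemma last} and \eqref{31}. You instead get $\liminf_p\|u_p^-\|_\iy\ge 1$ directly from the energy quantization $p\int_\Omega|u_p^-|^{p+1}\to 64\pi^2e$ of Proposition \ref{prop4.1} via the trivial bound $\int_\Omega|u_p^-|^{p+1}\le\|u_p^-\|_\iy^{p+1}|\Omega|$, which is more elementary and bypasses Lemma \ref{Lem31} entirely (your intermediate observation that $x^-\in\s$ is in fact redundant, since the localization $x_p^-\in B_\rho(x_2)$ for every small $\rho$ already pins down the limit). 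What the paper's route buys is the extra byproduct that $y_{2,p}$ may be taken equal to $x_p^-$, whence Lemma \ref{Lem31} immediately gives $\tfrac{\Omega_p^--x_p^-}{\e_p^-}\to\R^4$ and $W_{p^-}\to W$ in $C^4_{loc}(\R^4)$; this convergence is invoked later in the proof of Theorem \ref{th12}, so if one adopted your proof verbatim, that additional information would still have to be supplied separately (e.g. by running the Lemma \ref{Lem31} argument afterwards, once $x_p^-\to x_2$ and $|u_p(x_p^-)|\ge\tfrac12$ are known). For the statement of the Lemma itself, your proof is complete.
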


\begin{proof}
Recall Lemma \ref{Lem31}: Given $r\in (0,r_0/2)$, we define
$y_{2,p}\in \overline{B_{r}(x_2)}$ such that
\begin{align}\label{4-17}
    |u_p(y_{2,p})|=\max\limits_{B_{r}(x_2)}\left|u_p(x)\right|,
\end{align}
then $p| u_p(y_{2,p})|^{p-1} \to \infty$ as $p \to \infty$ and $y_{2,p}\to x_2$, so $|u_{p}(y_{2,p})|\geq \frac12$ for $p$ large. This, together with Lemma \ref{Lemma last}, implies $u_p(y_{2,p})\leq -\frac12$, i.e. $y_{2,p}\in \Omega_p^-$. Thus
\[u_p(x_p^-)=-\|u_p^-\|_{\infty}\leq u_p(y_{2,p})\leq -\frac{1}{2},\quad \text{for $p$ large}.\] From here and
Lemma \ref{Lemma last}, we get $x^-\neq x_1=x^+$. If $x^{-}\neq x_2$, i.e. $x^{-}\notin \s$, then \eqref{31} implies $1/2\leq |u_p(x_p^-)|\leq \frac{C}{p}\to 0$, a contradiction. Thus $x^-=x_2$. From here and \eqref{4-17}, we also have $u_p(x_p^-)=u_p(y_{2,p})=-\|u_p^-\|_{\infty}$, i.e. we can take $y_{2,p}=x_p^-$ actually. Then Lemma \ref{Lem31} implies
\begin{align*}
    \frac{\Omega-x_p^-}{\varepsilon_p^-}\to \mathbb{R}^4 \quad, \quad
    \frac{\Omega_p^--x_p^-}{\varepsilon_p^-}\to \mathbb{R}^4,
\end{align*}
and $W_{p^-} \to W$ in $C^4_{loc}(\mathbb{R}^4)$ as $p \to \infty$.
\end{proof}

\begin{corollary}\label{cor}
$\lim\limits_{p\to \infty}u_p(x_p^+)=\sqrt{e}$ and $\lim\limits_{p\to \infty}u_p(x_p^-)=-\sqrt{e}$.
\end{corollary}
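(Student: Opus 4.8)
The plan is to squeeze $u_p(x_p^+)$ and $-u_p(x_p^-)$ between $\sqrt e$ and $\sqrt e$. The upper bounds are already available: Proposition \ref{prop666} gives $\limsup_{p\to\infty}u_p(x_p^+)=\limsup_{p\to\infty}\|u_p^+\|_{\infty}\le\sqrt e$, and by the standing normalization $\|u_p^-\|_{\infty}\le\|u_p^+\|_{\infty}$ the same bound yields $\limsup_{p\to\infty}\big(-u_p(x_p^-)\big)=\limsup_{p\to\infty}\|u_p^-\|_{\infty}\le\sqrt e$. So the statement reduces to the two matching lower bounds $\liminf_{p\to\infty}u_p(x_p^+)\ge\sqrt e$ and $\liminf_{p\to\infty}\big(-u_p(x_p^-)\big)\ge\sqrt e$.

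For the lower bound at $x_1=x^+$, fix $\rho\in(0,r_0)$. Because $x_p^+$ is a global maximum point of $u_p$ and $\|u_p\|_{\infty}=\|u_p^+\|_{\infty}=u_p(x_p^+)$, we have $0<u_p(x)\le u_p(x_p^+)$ on $B_\rho(x_1)\cap\Omega_p^+$, hence the elementary pointwise bound $|u_p(x)|^{p+1}\le u_p(x_p^+)\,|u_p(x)|^{p}$ there. Integrating and multiplying by $p$ gives
\[
  p\int_{B_\rho(x_1)\cap\Omega_p^+}|u_p|^{p+1}\,dx\ \le\ u_p(x_p^+)\ p\int_{B_\rho(x_1)\cap\Omega_p^+}|u_p|^{p}\,dx .
\]
By Proposition \ref{prop5.2} the left-hand side converges to $64\pi^2 e$. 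For the right-hand side I claim $p\int_{B_\rho(x_1)\cap\Omega_p^+}|u_p|^{p}\to\gamma_1^+=64\pi^2\sqrt e$: by \eqref{31} the contribution of any fixed annulus $\{\rho'\le|x-x_1|\le\rho\}$ is $O\big(p|\Omega|(C(\rho')/p)^{p}\big)=o_p(1)$, so this integral is, up to $o_p(1)$, independent of $\rho\in(0,r_0)$ and therefore equals the double limit defining $\gamma_1^+$, which Proposition \ref{prop5.2} evaluates as $64\pi^2\sqrt e$. Letting $p\to\infty$ in the displayed inequality yields $64\pi^2 e\le 64\pi^2\sqrt e\ \liminf_{p\to\infty}u_p(x_p^+)$, i.e. $\liminf_{p\to\infty}u_p(x_p^+)\ge\sqrt e$, and combined with Proposition \ref{prop666} this proves $u_p(x_p^+)\to\sqrt e$.

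The lower bound at $x_2=x^-$ is obtained identically, with $u_p(x_p^+)$ replaced by $-u_p(x_p^-)=\|u_p^-\|_{\infty}$: on $B_\rho(x_2)\cap\Omega_p^-$ one has $0<|u_p(x)|\le\|u_p^-\|_{\infty}$, hence $|u_p(x)|^{p+1}\le\big(-u_p(x_p^-)\big)|u_p(x)|^{p}$, and Proposition \ref{prop5.2} supplies $p\int_{B_\rho(x_2)\cap\Omega_p^-}|u_p|^{p+1}\to 64\pi^2 e$ together with $p\int_{B_\rho(x_2)\cap\Omega_p^-}|u_p|^{p}\to\gamma_2^-=64\pi^2\sqrt e$, so the same passage to the limit gives $\liminf_{p\to\infty}\big(-u_p(x_p^-)\big)\ge\sqrt e$ and hence $u_p(x_p^-)\to-\sqrt e$. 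The whole argument is short given Proposition \ref{prop5.2}; the only point needing (routine) care is the identification of $\lim_{p\to\infty}p\int_{B_\rho(x_i)\cap\Omega_p^\pm}|u_p|^{p}$ with $\gamma_i^\pm$ for a fixed small $\rho$ rather than only in the $\rho\to0$ limit, and I do not expect any genuine obstacle.
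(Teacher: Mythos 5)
Your proof is correct and follows essentially the same route as the paper: the pointwise bound $|u_p^\pm|^{p+1}\le \|u_p^\pm\|_\infty\,|u_p^\pm|^{p}$ combined with Proposition \ref{prop5.2} and the values $\gamma_1^+=\gamma_2^-=64\pi^2\sqrt{e}$ gives the lower bounds, while Proposition \ref{prop666} gives the matching upper bounds. The only (harmless) cosmetic difference is that you identify $\lim_{p\to\infty}p\int_{B_\rho(x_i)\cap\Omega_p^\pm}|u_p|^{p}$ with $\gamma_i^\pm$ at fixed $\rho$ via \eqref{31}, whereas the paper simply invokes the double limit $\rho\to0$ defining $\gamma_i^\pm$.
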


\begin{proof}
By Proposition $\ref{prop5.2}$, it follows that
\begin{align}
     \liminf\limits_{p\to \infty}u_p(x_p^+) \, p\int_{B_{\rho}(x_1)}|u_{p}^+|^{p}dx &\geq \liminf\limits_{p\to \infty} p\int_{B_{\rho}(x_1)}|u_{p}^+|^{p+1}dx=
   64\pi^2 e.\\
   \lim_{\rho\to 0}\lim\limits_{p\to \infty} p\int_{B_{\rho}(x_1)}|u_{p}^+|^{p}dx &=\gamma_1^+=64\pi^2\sqrt{e},
\end{align}
so $\liminf\limits_{p\to \infty}u_p(x_p^+)\geq \sqrt{e}$.
Together with Proposition \ref{prop666}, we obtain $\lim\limits_{p\to \infty}u_p(x_p^+)= \sqrt{e}$.
The $\lim\limits_{p\to \infty}u_p(x_p^-)= -\sqrt{e}$ can be proved similarly.
\end{proof}

We are in the position to finish the proof of Theorem \ref{Theorem 1.1} and Theorem \ref{th12}.

\begin{proof}[Proof of Theorem \ref{Theorem 1.1} and Theorem \ref{th12}] Remark that \eqref{new 1,5}-\eqref{new 1,5-2} are proved in Proposition \ref{Lem26}, Proposition \ref{prop4.1} and Corollary \ref{cor}. Furthermore,
by Proposition $\ref{prop3.2}$, Proposition $\ref{prop5.2}$ and Lemma $\ref{prop5.3}$, we have
\begin{align}
    p u_p\to 64\pi^2\sqrt{e}(G(\cdot,x^+)-G(\cdot,x^-))
\end{align}
in $C_{loc}^4(\overline{\Omega}\setminus\{x^+,x^-\})$ as $p\rt\infty$. The convergence of $W_{p^+}$ and $W_{p^-}$ are proved in Proposition \eqref{prop22} and Lemma \ref{prop5.3} respectively. Therefore, it suffices to prove $(x^+, x^-)$ satisfies the equation \eqref{801}, which can be proved by a standard argument using
the Pohozaev identity in
$\Omega \setminus B_{\rho}(x^+)$ and $\Omega \setminus B_{\rho}(x^-)$; see e.g. \cite{Grossi2013Poincare} or \cite{wei}. We omit the details here. This completes the proof.
\end{proof}

\subsection*{Acknowledgements} The research of Z. Chen was supported by NSFC (No. 12222109, 12071240).

\subsection*{Declarations} {\bf Conflict of interest} There is no conflict of interest between the authors and all data generated or analysed
during this study are included in this article.

\end{document}